%% file: THHBP2BP1.tex
\documentclass[12pt]{amsart}
\usepackage[margin=1.2in]{geometry}

\input{THHpreamble}

\title[THH of truncated Brown--Peterson spectra II]{Topological Hochschild homology of truncated Brown--Peterson spectra II} 
\author{Gabriel Angelini-Knoll}
\address{Department of Mathematics, Applied Mathematics, and Statistics, Case Western Reserve University, Cleveland OH, USA}
\email{gabriel.angelini-knoll@case.edu}
\author{Maxime Chaminadour}
\address{Sino-French Institute, Renmin University of China, Suzhou, China}
\email{maxime.chaminadour@ruc.edu.cn}

\begin{document} 
\input{abstract}     
         \maketitle 
	\tableofcontents	

\input{intro} 
	\input{recollections}
	\input{bound}
        \input{spectseqcomputation}

\input{extensions}
	\bibliographystyle{alpha}
	\bibliography{THH}
\end{document}

%% file: THHpreamble.tex
\usepackage{amsmath}
\usepackage{amsthm}
\usepackage{amssymb}
\usepackage{lscape,xcolor}
\usepackage{graphicx}
\usepackage{mathrsfs}
\usepackage{mathtools}
\usepackage{stmaryrd}
\usepackage{verbatim}
\usepackage{rotating}
\usepackage{tikz-cd}
\usepackage{amsrefs}
\usepackage[colorlinks=true, linkcolor=blue, menucolor=blue]{hyperref}
\usepackage{euscript}
\usepackage[colorinlistoftodos]{todonotes}
\usepackage{spectralsequences}
\usepackage{xy}

\usepackage{xcolor}
\definecolor{seagreen}{RGB}{46,139,87}
\definecolor{maroon}{RGB}{128,0,0}
\definecolor{darkviolet}{RGB}{148,0,211}
\definecolor{twelve}{RGB}{100,100,170}
\definecolor{thirteen}{RGB}{100,150,50}
\definecolor{fourteen}{RGB}{200,0,0}
\definecolor{fifteen}{RGB}{0,200,0}
\definecolor{sixteen}{RGB}{0,0,200}
\definecolor{seventeen}{RGB}{200,0,200}
\definecolor{eighteen}{RGB}{0,200,200}

\allowdisplaybreaks[1]

\newcommand{\mmod}{\! \sslash \!}

\newcommand{\mr}[1]{\mathrm{#1}}

\newcommand{\br}[1]{\overline{#1}}

\newcommand{\Z}{\mathbb{Z}}

\newcommand{\F}{\mathbb{F}}

\newcommand{\bE}{\mathbb{E}}
\newcommand{\bF}{\mathbb{F}}
\newcommand{\bZ}{\mathbb{Z}}

\newcommand{\taf}{\mathrm{taf}}
\newcommand{\THC}{\mathrm{THC}}

\newcommand{\tmf}{\mathrm{tmf}}

\def \HF2{\mr{H}\F_2}

\newcommand{\MU}{\mr{MU}}

\newcommand{\BP}{\mr{BP}}

\newcommand{\ku}{\mathrm{ku}}

\DeclareMathOperator{\Ext}{Ext}
\DeclareMathOperator{\Tor}{Tor}

\DeclareMathOperator{\im}{im}

\DeclareMathOperator{\map}{map}

\DeclareMathOperator{\THH}{THH}

\def \AA0{\br{A \mmod A(0)}_*}
\def \AA2{A\mmod A(2)_*}

\def \AE2{A\mmod E(2)_*}
\renewcommand{\AE}[1]{A\mmod E(#1)_*}

\newcommand{\op}{\textup{op}}

 \newtheorem{theorem}[equation]{Theorem}
 \newtheorem{thmx}{Theorem}

 \newtheorem{corollary}[equation]{Corollary}
 \newtheorem{lemma}[equation]{Lemma}
 \newtheorem{proposition}[equation]{Proposition}

 \newtheorem*{thm*}{Theorem}
 \newtheorem*{cor*}{Corollary}
 \newtheorem*{lem*}{Lemma}
 \newtheorem*{prop*}{Proposition}
  \newtheorem*{not*}{Notation}

 \theoremstyle{definition}
 
 \newtheorem{example}[equation]{Example}
 
 \newtheorem{remark}[equation]{Remark}

  \newtheorem{runningassumption}[equation]{Running Assumption}

\newtheorem*{defn*}{Definition}
\newtheorem*{ex*}{Example}
\newtheorem*{exs*}{Examples}
\newtheorem*{rmk*}{Remark}
\newtheorem*{claim*}{Claim}

\numberwithin{equation}{section}
\numberwithin{figure}{section}

%% file: abstract.tex
\begin{abstract}
We compute topological Hochschild homology of $\mathbb{E}_3$-$\MU$-algebra forms of the second truncated Brown-Peterson spectrum with Adams summand coefficients at $p=2$ and conditionally at arbitrary primes.  
We also provide a new computational tool, a variant of the Brun spectral sequence, for computing topological Hochschild homology of truncated Brown--Peterson spectra with certain coefficients. As a consequence, we show that $\mathbb{E}_3$-$\MU$-algebra forms of truncated Brown--Peterson spectra are not Thom spectra $\mathrm{BP}\langle n\rangle$ at the prime $p=2$ for any $n\ge 2$. 
\end{abstract}

%% file: intro.tex
\section{Introduction}
Topological Hochschild homology has applications to many areas of mathematics, for example deformations of $\mathbb{A}_{\infty}$-algebras~\cite{Laz01}, string topology~\cite{CJ02}, algebraic K-theory~\cite{BHM93}, $p$-adic Hodge theory~\cite{BMS19}  and knot Floer homology~\cite{LOT15}. It also played an important role in the recent disproof of the telescope conjecture~\cite{BHLS23}. The topological Hochschild homology of $\mathbb{F}_p$ can be used to provide a new proof of Bott periodicity~\cite{HN20} and variants on topological Hochschild homology have been used to provide a new proof of the Segal conjecture for cyclic groups of prime order~\cites{HW21,AKZ25}. 
We therefore consider the topological Hochschild homology of ring spectra to be of fundamental importance. 

In homotopy theory, in addition to characteristic zero and characteristic $p$, there are characteristics corresponding to pairs $(n,p)$ where $n\ge 1$ is called the height and $p$ is a prime. This also leads to higher height analogues of rings of integers in local fields of which truncated Brown--Peterson spectra are an example~\cite{Rog25}. The topological Hochschild homology of the integers $\mathbb{Z}$, and more generally rings of integers in number fields, has been related to special values of Dedekind zeta functions in work of Morin~\cite{Mor24}, which builds on work of Bhatt--Morrow--Scholze~\cite{BMS19}. To generalize this to characteristics corresponding to higher heights $n\ge 1$, a complete understanding of topological Hochschild homology of truncated Brown--Peterson spectra $\BP\langle n\rangle$ is desirable.

When $n=-1$ then $\BP\langle -1\rangle=\mathbb{F}_p$ and when $n=0$ then $\BP\langle 0\rangle=\mathbb{Z}_{(p)}$ and their topological Hochschild homology was computed by B\"okstedt~\cite{Bok85}. When $n=1$, then $\BP\langle 1\rangle=\ell$ is the Adams summand of $p$-local complex K-theory $\mathrm{ku}_{(p)}$, which is closely related to the theory of complex vector bundles. The topological Hochschild homology of $\ell$ completely computed by~\cite{AHL10} and the topological Hochschild homology of complex topological K-theory was computed by~\cite{Lee22} and \cite{Cha25} using different methods. Beyond these examples, there are no known complete computations of $\THH_*(R)$ where $R$ is a ring spectrum that is not Eilenberg--MacLane.

To compute topological Hochschild homology of truncated Brown--Peterson spectra, the traditional approach is to compute the topological Hochschild homology with coefficients in a finite field $\mathbb{F}_p$ and then use a lattice of Bockstein spectral sequences to reduce down to topological Hochschild homology of truncated Brown--Peterson spectra. Here one uses the fact that the coefficients of truncated Brown--Peterson spectra are $\BP\langle n\rangle_*=\mathbb{Z}_{(p)}[v_1,\cdots ,v_n]$ and we can therefore consider $v_i$-Bockstein spectral sequences such as 
\[ 
\THH_*(\BP\langle n\rangle;\BP\langle n\rangle/v_i)[v_i]\implies \THH_*(\BP\langle n\rangle)\,.
\]

The first step is therefore the computation 
\[ \THH_*(\BP\langle n\rangle;\mathbb{F}_p)=\mathbb{F}_p[\mu^{p^{n+1}}]\langle \lambda_1,\dots ,\lambda_{n+1} \rangle
\]
first appearing in~\cite{ACH24} (cf.~\cite{Ang26}). Here  $\mathbb{F}_p\langle \lambda_1,\cdots ,\lambda_n\rangle$ denotes an exterior algebra on generators in degree $|\lambda_{i}|=2p^i-1$ for $1\le i\le n+1$ and $\mathbb{F}_p[\mu^{p^{n+1}}]$ is a polynomial algebra with generator in degree $|\mu^{p^{n+1}}|=2p^{n+1}$. From there the first author computed 
\[ \THH_*(\BP\langle n\rangle ; \mathbb{Z}_{(p)})_p^{\wedge}\]
with Culver and H\"oning for arbitrary heights $n$ at the prime $p=2$ and conditionally at primes $p>2$ as well as 
$\THH_*(\BP\langle 2\rangle ; M)$
when $M\in \{\BP\langle 2\rangle/(p,v_1),\BP\langle 2\rangle /(p,v_2)\}$~\cite{ACH24}. The next goal in this program is to compute $\THH_*(\BP\langle 2\rangle;\BP\langle 1\rangle)$,
which we accomplish in this paper. 

\begin{thmx}\label{thm:main}
  Suppose $p=2$ and $\mathrm{BP}\langle 2\rangle$ is a $\mathbb{E}_3$-$\mathrm{MU}$-algebra form of the second truncated Brown--Peterson spectrum or $p=3$ and $\mathrm{BP}\langle 2\rangle=\mathrm{taf}^{D}$. 
  The topological Hochschild homology of $\BP\langle 2\rangle$ is a direct sum
  \[
    \THH_*(\BP \langle 2 \rangle ; \BP \langle 1 \rangle) \cong \Z_{(p)}[v_1] \langle \sigma v_2 \rangle \oplus \mathcal{F} \oplus \Sigma^{2p - 1} (\mathcal{F}_{\geq 2p^2 - 1}) \oplus \mathcal{T} \oplus \Sigma^{2p - 1} \mathcal{T}
  \]
  where $\mathcal{T}$ is the $\Z_{(p)}[v_1]$-submodule of $\THH_*(\BP \langle 1 \rangle)$ containing the torsion elements that are in the image of the multiplication by $v_1^p$ map,  $\mathcal{F}$ denotes the submodule of the torsion free part of $\THH_*(\BP \langle 1 \rangle)$ of elements not of the form $v_1^k \cdot 1$ for some $k \geq 0$, and $\mathcal{F}_{\geq 2p^2 -1}$ is the submodule of $\mathcal{F}$ whose elements are in degree $\geq 2p^2 -1$.
\end{thmx}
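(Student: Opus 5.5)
We will compute $\THH_*(\BP\langle 2\rangle;\BP\langle 1\rangle)$ by comparing it with $\THH_*(\BP\langle 1\rangle)$, which is completely known by \cite{AHL10}. The tool is the Brun-type spectral sequence advertised in the abstract.

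\textbf{Step 1: the spectral sequence.} Since $\BP\langle 2\rangle$ is an $\mathbb{E}_3$-$\MU$-algebra and $\BP\langle 1\rangle=\BP\langle 2\rangle/v_2$ is an $\mathbb{E}_1$-$\BP\langle 2\rangle$-algebra, we have an equivalence
\[
\THH(\BP\langle 2\rangle;\BP\langle 1\rangle)\simeq \THH(\BP\langle 1\rangle)\wedge_{\THH(\BP\langle 1\rangle)\wedge_{\THH(\BP\langle 2\rangle)}\BP\langle 1\rangle}\BP\langle 1\rangle ,
\]
or, more concretely, a cofiber/bar construction expressing the relative term $\BP\langle 1\rangle\wedge_{\BP\langle 2\rangle}\BP\langle 1\rangle$. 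Its homotopy is $\BP\langle 1\rangle_*\langle \sigma v_2\rangle$, with $|\sigma v_2|=2p^2-1$.

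This gives a Brun spectral sequence of the form
\[
E^2=\THH_*(\BP\langle 1\rangle)\otimes \Lambda(\sigma v_2)\Longrightarrow \THH_*(\BP\langle 2\rangle;\BP\langle 1\rangle),
\]
or, dually, a two-column spectral sequence. Equivalently, there is a cofiber sequence
\[
\Sigma^{2p^2-2}\THH(\BP\langle 1\rangle)\to \THH(\BP\langle 2\rangle;\BP\langle 1\rangle)\to\cdots
\]
obtained by filtering in the $v_2$-direction.

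\textbf{Step 2: the differentials.} I would determine the differentials by mapping to the known mod $(p,v_1)$ computations. Concretely, reduce mod $(p,v_1)$ and compare with
\[
\THH_*(\BP\langle 2\rangle;\F_p)=\F_p[\mu^{p^3}]\langle\lambda_1,\lambda_2,\lambda_3\rangle
\]
and with
\[
\THH_*(\BP\langle 1\rangle;\F_p)=\F_p[\mu^{p^2}]\langle\lambda_1,\lambda_2\rangle .
\]
The key differential should be
\[
d(\mu^{p^2})\doteq \sigma v_2\cdot(\text{unit})\quad\text{mod }(p,v_1).
\]
This reflects the fact that $\lambda_3$ is present in the target while $\mu^{p^2}$ is killed.

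Lifting this integrally, the differential becomes a map
\[
\delta\colon \THH_*(\BP\langle 1\rangle)\to\Sigma^{2p^2-1}\THH_*(\BP\langle 1\rangle)
\]
of $\Z_{(p)}[v_1]$-modules, behaving like a derivation. The answer then splits as $\ker\delta\oplus\operatorname{coker}\delta$.

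Reading off kernel and cokernel against the explicit description of $\THH_*(\ell)$ from \cite{AHL10} should give the following pieces:
\begin{itemize}
\item the unit tower $\Z_{(p)}[v_1]\cdot 1$ together with its $\sigma v_2$-multiple, giving $\Z_{(p)}[v_1]\langle\sigma v_2\rangle$;
\item the torsion-free classes $\mathcal{F}$, and a shifted copy $\mathcal{F}_{\ge 2p^2-1}$ surviving in the cokernel;
\item among the torsion, only the $v_1^p$-divisible part $\mathcal{T}$ and its suspension, while the lower $v_1$-torsion is cancelled.
\end{itemize}
The degree shift $\Sigma^{2p-1}$ in the statement suggests that the relevant identification uses $\lambda_1$-multiplication (degree $2p-1$) rather than $\sigma v_2$. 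I would therefore track how $\sigma v_2$ is identified with $\lambda_1$ times classes in the $\THH(\ell)$ description. This identification is itself a step to verify.

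\textbf{Step 3: extensions.} Finally, I would resolve additive and $v_1$-multiplicative extensions to obtain the stated direct-sum decomposition. I would do this using the $v_1$-Bockstein spectral sequence from $\THH_*(\BP\langle 2\rangle;\BP\langle 2\rangle/(p,v_2))$, computed in \cite{ACH24}, together with a rationalization check against $\THH_*(\BP\langle 2\rangle;\BP\langle 1\rangle)\otimes\mathbb{Q}$.

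\textbf{Main obstacle.} I expect the hardest step to be pinning down the differential integrally on the torsion summands of $\THH_*(\ell)$, that is, deciding exactly which torsion classes survive, namely those in $v_1^p\cdot$ torsion. My first approach would be to use the mod $p$ computation together with naturality along $\BP\langle 2\rangle\to\BP\langle 1\rangle$ and $v_1$-linearity. The $p>2$ case is conditional precisely because it needs an $\mathbb{E}_3$-$\MU$ form, here $\mathrm{taf}^D$.
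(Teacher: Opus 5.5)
Your overall architecture matches the paper's: the one-step Brun spectral sequence $\THH_*(\BP\langle 1\rangle)\langle\sigma v_2\rangle\Rightarrow\THH_*(\BP\langle 2\rangle;\BP\langle 1\rangle)$, then $d_1$, then extensions. That spectral sequence comes from the fiber $\Sigma^{2p^2-1}\BP\langle 1\rangle$ (not $\Sigma^{2p^2-2}$) of $\BP\langle 1\rangle\otimes_{\BP\langle 2\rangle}\BP\langle 1\rangle\to\BP\langle 1\rangle$. However, the two steps that carry the content are not supplied, and the first cannot work as you propose it.

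Comparison with $\THH_*(\BP\langle 2\rangle;\F_p)$ and $\THH_*(\BP\langle 1\rangle;\F_p)$ only yields $d_1(\mu_2^k)=k\mu_2^{k-1}\sigma v_2$ modulo $(p,v_1)$. This is blind to the $p$-adic content of the actual answer $d_1(b_i)\dot{=}v_0^{\nu_p(i-1)}b_{i-1}\sigma v_2$. For instance, $b_{p+1}$ reduces to $\mu_2^{p}\lambda_1\lambda_2$, on which the mod $p$ differential vanishes, yet $d_1(b_{p+1})\dot{=}v_0b_p\sigma v_2\neq 0$. So mod $(p,v_1)$ information plus $v_1$-linearity cannot decide whether $b_{p+1}$ survives.

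The missing input is naturality in the coefficients along $\BP\langle 1\rangle\to H\Z_{(p)}$, together with the known $\THH_*(\BP\langle 2\rangle;\Z_{(p)})$ from \cite{ACH24}. Since $T_0^2\to T_0^1$ has image $pT_0^1$, the connecting map must be $d_1(b_i)\dot{=}p^{\nu_p(i-1)}b_{i-1}\sigma v_2$ modulo $v_1$. This is where the hypothesis on $p$ enters: it is the vanishing of the error term in \cite{ACH24}, automatic for $\mathbb{E}_\infty$ forms such as $\mathrm{taf}^{D}$. It is not about the existence of $\mathbb{E}_3$-$\MU$-forms, as you suggest.

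You then still need two further arguments:
\begin{itemize}
\item rationalization, to see that the torsion-free classes are permanent cycles;
\item a degree and $v_1$-order argument in $\THH_*(\BP\langle 1\rangle)$, showing that the $v_1$-divisible correction term is a $p$-multiple of $v_0^{\nu_p(i-1)}b_{i-1}$.
\end{itemize}
Only then do you get $\operatorname{im} d_1=\ker(\times v_1^p)$ and $\ker d_1=\operatorname{im}(\times v_1^p)\oplus M$, with $M$ generated by $b_1$ and the $v_0^kb_{p^k}$.

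Your claim that the answer splits as $\ker\delta\oplus\operatorname{coker}\delta$ is false, and your list of pieces is read off from the statement rather than from $E^\infty$. On $E^\infty$ the classes in $M$ are torsion cycles that are not $v_1^p$-divisible. The torsion-free classes of the $\sigma v_2$-column other than $\Z_{(p)}[v_1]\sigma v_2$ form all of $\mathcal{F}\sigma v_2\cong\Sigma^{2p^2-1}\mathcal{F}$, including $\lambda_1\sigma v_2$. Nothing there is $\Sigma^{2p-1}(\mathcal{F}_{\ge 2p^2-1})$.

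That summand exists only because of the hidden extensions $p\cdot b_1=\lambda_1\sigma v_2$ and $p\cdot v_0^kb_{p^k}=v_1^{p^{k+1}-p}v_0^{k-1}a_{p^{k-1}}\sigma v_2$. These glue $M$ and $\mathcal{F}\sigma v_2$ into a torsion-free module isomorphic to $\Sigma^{2p-1}(\mathcal{F}_{\ge 2p^2-1})$ via $v_0^na_{p^n}\mapsto v_0^nb_{p^n}$. The paper handles them in three steps:
\begin{itemize}
\item it rules out torsion--torsion extensions by parity;
\item it shows by a $v_1$-order argument that these are the only candidates;
\item it proves they occur: otherwise $v_1^pv_0^kb_{p^k}=0$ in the abutment, which would require a $v_1$-Bockstein differential in $\THH_*(\BP\langle 2\rangle;\Z_{(p)})[v_1]\Rightarrow\THH_*(\BP\langle 2\rangle;\BP\langle 1\rangle)$ out of degree $2(p^k+1)p^2-1$, a degree in which $\THH_*(\BP\langle 2\rangle;\Z_{(p)})$ has no torsion.
\end{itemize}

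Finally, the shift $\Sigma^{2p-1}$ on $\mathcal{T}$ does not come from $\lambda_1$-multiplication. It is the isomorphism $\operatorname{im}(\times v_1^p)\cong\operatorname{coim}(\times v_1^p)$, which lowers degree by $2p^2-2p$, composed with the shift $|\sigma v_2|=2p^2-1$.
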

Our work builds on~\cite[Theorem~3.8]{ACH24}, which somewhat restricts the scope of our results to certain forms of $\BP\langle n\rangle$, see Running Assumption~\ref{runningassumption}. We have stated the version of the theorem that holds unconditionally above. 

As part of this program, we present a new computational tool, which is in some sense an instance of the Brun spectral sequence considered by~\cites{Bru00,Hon20}. This computational tool gives an inductive approach to computing $\THH(\BP\langle n\rangle)$. First one computes $\THH_*(\BP\langle n-1\rangle)$ along with the cap product of a certain class in the topological Hochschild cohomology of $\BP\langle n-1\rangle$. Then one uses the Brun spectral sequence from Proposition~\ref{prop:brunss} to compute $\THH_*(\BP\langle n\rangle;\BP\langle n-1\rangle)$.  Finally, one computes the remaining $v_{n}$-Bockstein spectral sequence to compute $\THH_*(\BP\langle n\rangle)$, which we leave to future work. 

As an application, we show that $\BP\langle n\rangle$ is not a Thom spectrum of a $2$-fold loop map over the sphere spectrum for $n\ge 2$ at the prime $p=2$. When $n=-1,0$ it is known that $\mathrm{BP}\langle n\rangle$ is the Thom spectrum of a $2$-fold loop map at $p=2$ by Mahowald~\cite{Mah79}. When $n=1$, it is known that $\BP\langle 1\rangle$ is not a Thom spectrum by Mahowald~\cite{Mah87} and our argument builds on work of Angeltveit--Hill--Lawson~\cite{AHL09}, which reproved Mahowald's theorem at $n=1$ using topological Hochschild homology. 
\begin{thmx}\label{thm:main-thom}
Truncated Brown--Peterson spectra $\BP\langle n\rangle$ are not Thom spectra of $2$-fold loop maps for any $n\ge 2$ at the prime $p=2$. 
\end{thmx}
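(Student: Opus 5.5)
The plan is to follow the strategy of Angeltveit--Hill--Lawson~\cite{AHL09}. I would suppose, for contradiction, that at $p=2$ and for some $n\ge 2$ there is a $2$-fold loop map $f\colon X\to \mathrm{BGL}_1(\mathbb{S})$ whose Thom spectrum $Mf$ is equivalent to $\BP\langle n\rangle$ as an $\mathbb{E}_2$-ring. By Blumberg--Cohen--Schlichtkrull there is an equivalence $\THH(Mf)\simeq Mf\wedge BX_+$, where $BX$ is a loop space, hence a homotopy commutative $H$-space. For any $Mf$-module $M$ this gives
\[
\THH(\BP\langle n\rangle;M)\simeq M\wedge_{\BP\langle n\rangle}\THH(\BP\langle n\rangle)\simeq M\wedge BX_+ .
\]
Every coefficient computation of $\THH$ therefore becomes a generalized homology computation of a single space $BX$, and that space carries strong structure: a Hopf algebra structure on its mod $2$ homology, Dyer--Lashof operations, and Browder's theorem on the Bockstein spectral sequence of an $H$-space.

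Next I would record what the two sides force. With $M=\mathbb{F}_2$, the computation recalled in the introduction gives
\[
H_*(BX;\mathbb{F}_2)\cong \mathbb{F}_2[\mu^{2^{n+1}}]\langle\lambda_1,\dots,\lambda_{n+1}\rangle
\]
as a Hopf algebra, and the Thom isomorphism gives $H_*(X;\mathbb{F}_2)\cong A\mmod E(n)_*$. With $M=\mathbb{Z}_{(2)}$, the unconditional $p=2$ computation of $\THH_*(\BP\langle n\rangle;\mathbb{Z}_{(2)})$ from~\cite{ACH24} gives $H_*(BX;\mathbb{Z}_{(2)})$. Rationally this is $\mathbb{Q}\langle\sigma v_1,\dots,\sigma v_n\rangle$ in degrees $2^{i+1}-1$, while the torsion consists of explicit cyclic summands of known orders. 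Consequently, in the mod $2$ Bockstein spectral sequence for $BX$, the classes $\lambda_1,\dots,\lambda_{n+1}$ and the powers of $\mu^{2^{n+1}}$ must cancel in a way that leaves exactly $n$ rational exterior generators. In particular the even class $\mu=\mu^{2^{n+1}}$ must support or receive a Bockstein differential, most plausibly $d_r(\mu)\doteq\lambda_{n+1}$ modulo decomposables, with $r$ read off from~\cite{ACH24}.

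The key step, and the one I expect to be the main obstacle, is to derive a contradiction from Browder's theorem. In the Bockstein spectral sequence of a homotopy commutative $H$-space, if $d_r(x)=y$ with $x$ of even degree, then $d_{r+1}(x^2)=xy$ modulo indeterminacy (with $r\ge 2$ needed in the relevant case, so $r=1$ needs separate care). Iterating, the powers $\mu^{2^k}$ survive to $E_{r+k}$ and produce torsion summands of order $2^{r+k}$, so the torsion exponents in $H_*(BX;\mathbb{Z}_{(2)})$ would be unbounded and grow along the $\mu$-tower. I would compare this with the explicit $2$-torsion in the~\cite{ACH24} answer, whose orders along the relevant tower are bounded by the orders built into the $\lambda_{n+1}$-pattern, or at least follow a pattern incompatible with Browder's doubling. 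The delicate points are:
\begin{itemize}
\item identifying exactly which Bockstein differential hits or leaves $\mu$ given the decomposables;
\item handling the case $r=1$;
\item matching the integral computation degree by degree.
\end{itemize}
If the $\mathbb{Z}$-coefficient comparison is not sharp enough, the fallback is to use $M=\BP\langle n-1\rangle$ together with the Brun spectral sequence of Proposition~\ref{prop:brunss}. That would give $\BP\langle n-1\rangle_*(BX)$, and I would compare its $v_{n-1}$-torsion and the free summand $\BP\langle n-1\rangle_*\langle\sigma v_n\rangle$ with what an Atiyah--Hirzebruch spectral sequence for the $H$-space $BX$ permits. For $n=2$ this comparison would use Theorem~\ref{thm:main} directly, and the shift $\Sigma^{3}(\mathcal{F}_{\ge 7})$ is the kind of asymmetry that a free-over-$H_*(BX)$ structure forbids.

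Finally, to pass from $n=2$ to all $n\ge 2$, I would run the same argument with the general-height inputs: the $\mathbb{F}_2$ and $\mathbb{Z}_{(2)}$ computations hold for all $n$ at $p=2$. The contradiction involves only the top classes $\mu^{2^{n+1}}$ and $\lambda_{n+1}$ together with the rational rank $n$, so the argument is uniform in $n$.
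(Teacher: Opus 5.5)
\textbf{The main step fails.} The Bockstein/Browder comparison cannot produce a contradiction, because the integral data are compatible with Browder's doubling. The torsion $T_0^n$ in $\THH_*(\BP\langle n\rangle;\Z_{(2)})$ from \cite{ACH24} is not bounded along the $\mu$-tower. It contains $\Z/2^s$ for every $s\ge 1$, produced by Bockstein differentials $d_s(\mu^{2^{s-1}})\doteq\lambda_{n+1}\mu^{2^{s-1}-1}$. That is exactly the pattern an $H$-space would force.

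The same pattern already occurs for $\THH(\Z_{(2)})\simeq H\Z_{(2)}\wedge\Omega S^3\langle 3\rangle_+$, and $H\Z_{(2)}$ \emph{is} the Thom spectrum of a $2$-fold loop map \cite{Mah79}. Your uniform-in-$n$ argument uses only $H_*(BX;\F_2)$, $H_*(BX;\Z_{(2)})$, $\mu$ and $\lambda_{n+1}$. It would therefore equally ``prove'' a false statement at $n=0$. The rational classes $\lambda_1,\dots,\lambda_n$ are permanent cycles and add no constraint on this pattern.

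There are also smaller problems.
\begin{itemize}
\item For a $2$-fold loop map, $BX$ is only a loop space, not homotopy commutative, so the homology form of Browder's formula is not available as you state it.
\item The splitting $\THH(Mf)\simeq Mf\wedge BX_+$ for $2$-fold loop maps needs the extra input that $\eta\mapsto 0$ in $\pi_1\BP\langle n\rangle$ (cf.\ \cite{Bea17}).
\item Your fallback through $\BP\langle n-1\rangle$-coefficients names no concrete obstruction. For $n\ge 3$ those groups are not computed, so it is not uniform in $n$ either.
\end{itemize}

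\textbf{The missing idea.} The obstruction is a $\ku_*$-module extension between torsion-free classes, and it is detected by real $K$-theory. The paper takes $\ku=\BP\langle 1\rangle$ coefficients for every $n\ge 2$, so that (in your notation) $\THH(\BP\langle n\rangle;\ku)\simeq\ku\wedge BX_+\simeq\ku\wedge_{\mathrm{ko}}(\mathrm{ko}\wedge BX_+)$.

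It then uses the fiber sequence $\THH(\ku;F)\to\THH(\BP\langle n\rangle;\ku)\to\THH(\ku)$. Here $\pi_*F$ vanishes below degree $7$ and in degree $8$, and is $\Z_{(2)}$ in degree $7$. This shows that in degrees $\le 8$ the reduced $\THH$ is generated over $\ku_*$ by $\lambda_1,a_1,\sigma v_2$ in degrees $3,7,7$. There is a single relation $2a_1=v_1^2\lambda_1$, inherited from $\THH_*(\ku)$. This is independent of $n$, since $\sigma v_3$ lies in degree $15$.

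A $\mathrm{ko}$-cell structure on $\mathrm{ko}\wedge BX_+$ would need an $8$-cell whose attaching map has its $\Sigma^3$-component in $\pi_4\mathrm{ko}$. The image of $\pi_4\mathrm{ko}\to\pi_4\ku$ is $2\pi_4\ku$, so after base change $v_1^2\lambda_1$ could not be divisible by $2$, which is a contradiction.

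This relation already sits inside $\mathcal{F}$ in Theorem~\ref{thm:main}. What your proposal lacks is a reason it cannot occur in $\ku_*$ of a space. That reason is invisible to the $\F_2$- and $\Z_{(2)}$-homology of $BX$, so no Bockstein analysis can reach it.
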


\subsection{Outline}
We first recall the necessary results from~\cite{AHL10}, \cite{AR05}, and \cite{ACH24} that we build on in this paper in Section~\ref{recollections}. In Section~\ref{sec:bounding}, we present a Brun spectral sequence for computing $\mathrm{THH}_*(\BP\langle n\rangle ;\BP\langle n-1\rangle)$. In Section~\ref{THH}, we use this spectral sequence to compute topological Hochschild homology of $\BP\langle 2\rangle$ with coefficients in $\BP\langle 1\rangle$ and we resolve  $\BP\langle 1\rangle_*$-module extensions. Finally, we prove that $\mathbb{E}_3$-$\MU$-algebra forms of the $n$-th truncated Brown--Peterson spectra are not Thom spectra at $p=2$ in Section~\ref{notThom}. 

\subsection{Conventions}
We write $L_{E}$ for Bousfield localization at a spectrum $E$. By an $
\mathbb{E}_{3}$ $\MU$-algebra form of $\BP\langle n\rangle$ we mean a $p$-local $\mathbb{E}_{3}$ $\MU$-algebra $R$ such that
the composite
\[ \mathbb{Z}_{(p)}[v_1,\cdots ,v_n]\subset \mathrm{BP}_*\subset \mathrm{MU}_{(p)}\to \pi_*R
\]
is an isomorphism. The last map in this composite is the p-localized unit of the $\mathbb{E}_3$ $\MU$-algebra
structure. Througout, we write $\BP\langle n\rangle$ for fixed family of $\bE_3$-$\MU$-algebra forms of $\BP\langle n\rangle$ in the such that there are maps
\[
\BP\to \MU \to  \dots \to \BP\langle n\rangle\to  \BP\langle n-1\rangle\to  \dots  \to \bZ_{(p)}\to \bF_{p}
\]
of $\bE_2$-rings. 
We fix classes $v_i$ such that on graded commutative rings
\[
\BP_*\to  \BP\langle n\rangle_*
\]
is given by sending $v_i$ to $v_i$ for $0\le i\le n$ (with $v_0=p$) and $v_j\mapsto 0$ otherwise. 
This also fixes the map of graded commutative rings 
\[  
    \BP\langle n\rangle_*\to  \BP\langle n-1\rangle_*
\]
sending $v_i$ to $v_i$ for $0\le i\le n-1$ and $v_n\mapsto 0$. Such a family exists by~\cite{HW22}*{Theorem~A}. 
Alternatively, when $n\le 2$ and $p\in \{2,3\}$, there are $\bE_2$-$\MU$-algebra forms of $\BP\langle 2\rangle$ denoted $\tmf_1(3)$ and $\taf^{D}$ respectively, which have the extra property that their $\bE_3$-ring structures lift to $\bE_\infty$-ring structures by~\cites{HL10,LN12,LN14,CM15}. 

Recall that if $R$ is an $\mathbb{E}_1$-ring and $M$ is an $R\otimes R^{\op}$-module, then we can define 
\[ 
\THH(R;M):=M\otimes_{R\otimes R^{\op}}R
\]
and when $M=R$ we simply write $\THH(R):=\THH(R;R)$. Here we write $\otimes$ for the derived smash product in spectra. We also write $\otimes$ for the tensor product of abelian groups and we let context dictate the meaning. We also define 
\[ 
\THC(R;M):=\map_{R\otimes R^{\op}}(M,R)
\]
where we write $\map_{R\otimes R^{\op}}$ for the mapping spectrum in $R\otimes R^{\op}$-modules. 

We write $\dot{=}$ for an equality that holds up to multiplying one side by a unit. 

\subsection{Acknowledgements}
The first author would like to thank Dominic Leon Culver and Eva H\"oning for their contributions during an previous collaboration. The first author would also like to thank Haldun \"Ozg\"ur Bayindir for helpful conversations related to this project. The authors would also like to thank the organizers of the the IWoAT workshop in 2025 in Hangzhou, China where the authors began this collaboration. 

%% file: recollections.tex
\section{Recollections}\label{recollections}

Associated to the square 
\[
\begin{tikzcd}
\BP\langle 1\rangle \ar[r] \ar[d] & H\mathbb{Z}_{(p)} \ar[d] \\ 
k(1) \ar[r] & H\mathbb{F}_p
\end{tikzcd}
\]
there is a square of Bockstein spectral sequences \[
\begin{tikzcd}
\THH_*((\BP\langle n \rangle ;\mathbb{F}_p)[v_0,v_1]\arrow[Rightarrow]{r} \arrow[Rightarrow]{d} &  \THH_*(\BP\langle n\rangle ;\mathbb{Z}_{(p)})[v_1]\arrow[Rightarrow]{d}\\ 
\THH_*(\BP\langle n\rangle ;k(1))[v_0]\arrow[Rightarrow]{r}  &  \THH_*(\BP\langle n \rangle ;\BP\langle 1\rangle) 
\end{tikzcd}
\]
for any integer $n\ge 1$. The first Bockstein spectral sequence in each composite of spectral sequences above was computed in~\cite{MS93},~\cite{AR05} and~\cite{AHL10} for $n=1$ and~\cite{ACH24} for $n=2$. We recall some of these results below.

\begin{theorem}[{\cite[\S~3.2]{AHL10}}]
The homotopy of $\THH(\BP\langle 1\rangle;\mathbb{Z}_{(p)})$ is a copy of $\mathbb{Z}_{(p)}$ generated by $\lambda_1$ plus torsion. The torsion is generated as a $\mathbb{Z}_{(p)}$-module by the
elements $a_i$ and $b_i$ where $a_i=\mu_2^{i-1}\lambda_2$ and $b_i=\mu_2^{i-1}\lambda_1\lambda_2$ for $i\ge 1$ and they both have order $p^{k+1}$, where $k= \nu_{p}(i)$, the
$p$-adic valuation of $i$. Here $|a_i|=2p^2i-1$ and $|b_i|=2p^2i+2p-2$
\end{theorem}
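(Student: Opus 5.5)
The plan is to run the $v_0$-Bockstein spectral sequence
\[
\THH_*(\BP\langle 1\rangle;\mathbb{F}_p)[v_0]\implies \THH_*(\BP\langle 1\rangle;\mathbb{Z}_{(p)})_p^{\wedge},
\]
starting from the McClure--Staffeldt/B\"okstedt-type input
\[
\THH_*(\BP\langle 1\rangle;\mathbb{F}_p)=\mathbb{F}_p[\mu_2]\langle\lambda_1,\lambda_2\rangle ,
\]
where $|\lambda_1|=2p-1$, $|\lambda_2|=2p^2-1$ and $|\mu_2|=2p^2$. Since the homotopy groups are finitely generated and $p$-local, the $p$-completion causes no loss.

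First I identify the differentials. The class $\lambda_1$ is the image of $\sigma v_1$ under reduction from integral coefficients. It therefore lifts to $\THH_*(\BP\langle 1\rangle;\mathbb{Z}_{(p)})$, so $\lambda_1$ is a $v_0$-Bockstein cycle. The same lifting argument applies to $\lambda_2$, using the comparison with $\THH(\mathbb{Z}_{(p)})$ and the map $\BP\langle1\rangle\to H\mathbb{Z}_{(p)}$. The key differentials are forced by comparison with B\"okstedt's computation of $\THH_*(\mathbb{Z}_{(p)})$, where $\mu^{p^k}$-type classes support Bocksteins of length $k+1$. Concretely, I would establish
\[
d_{k+1}(\mu_2^{i}) \doteq v_0^{k+1}\mu_2^{i-1}\lambda_2 \quad\text{for } \nu_p(i)=k,
\]
with $\lambda_1$ a multiplicative permanent cycle. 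Then $d_{k+1}(\mu_2^{i}\lambda_1) \doteq v_0^{k+1}\mu_2^{i-1}\lambda_1\lambda_2$ by the Leibniz rule. These differentials can be justified using the derivation $\sigma$ and the formula $d(\mu_2^{i})=i\,\mu_2^{i-1}d(\mu_2)$ modulo higher terms, together with the $\THH(\mathbb{Z}_{(p)})$ comparison to fix the length exactly.

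Next I read off the $E_\infty$-page. The $v_0$-towers on $1$ and $\lambda_1$ survive, giving torsion-free $\mathbb{Z}_{(p)}\{1,\lambda_1\}$. The statement as written lists $\lambda_1$ as the torsion-free part, and I would interpret it as being together with the unit in degree $0$. Each $\mu_2^{i-1}\lambda_2$ is truncated to a $v_0$-tower of height $k+1$, which gives $a_i$ of order $p^{k+1}$. Likewise each $\mu_2^{i-1}\lambda_1\lambda_2$ gives $b_i$ of order $p^{k+1}$. The degrees follow directly:
\[
|a_i| = 2p^2(i-1)+2p^2-1 = 2p^2 i-1, \qquad |b_i| = 2p^2 i+2p-2 .
\]
Finally, multiplicative extensions by $p$ are resolved by the $v_0$-towers themselves, so the order of each class is exactly the height of its tower.

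The main obstacle is pinning down the exact lengths of the differentials $d_{k+1}(\mu_2^i)$, that is, showing that no shorter or longer Bockstein occurs. For this I would map to $\THH(\mathbb{Z}_{(p)};\mathbb{Z}_{(p)})$, whose torsion orders are known from B\"okstedt. I would then check that $\mu_2$ maps to $\mu^{p}$-type classes, so that the orders match, with the lower bound coming from naturality and the upper bound from counting in the rational computation.
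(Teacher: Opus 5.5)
\textbf{Verdict: there is a genuine gap, at exactly the step you flag as the main obstacle.}

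Your setup is right: it is the $v_0$-Bockstein route of the cited computation. The differentials you aim for, $d_{k+1}(\mu_2^{i})\dot{=}v_0^{k+1}\mu_2^{i-1}\lambda_2$ with $k=\nu_p(i)$, are also correct. But nothing in your argument fixes their lengths, and the comparison you propose gives the wrong answer.

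\emph{Why the comparison fails.} Under $\THH_*(\BP\langle 1\rangle;\mathbb{F}_p)\to\THH_*(\mathbb{Z}_{(p)};\mathbb{F}_p)=\mathbb{F}_p[\mu_1]\langle\lambda_1\rangle$ you have $\mu_2\mapsto\mu_1^p$, but $\lambda_2\mapsto 0$.
\begin{itemize}
\item Every possible target $v_0^r\mu_2^{i-1}\lambda_2$ maps to zero, so naturality gives no lower bound on the length of the differential on $\mu_2^i$.
\item Naturality only gives the upper bound $\nu_p(i)+2$, because $\mu_1^{pi}$ supports a $d_{\nu_p(i)+2}$; indeed $\THH_{2p^2i-1}(\mathbb{Z}_{(p)})\cong\mathbb{Z}/p^{\nu_p(i)+2}$.
\item So the orders do \emph{not} match. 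Already $\mu_2$ supports a $d_1$ while $\mu_1^p$ supports a $d_2$. Taken at face value, the comparison would give $a_i$ the wrong order $p^{\nu_p(i)+2}$.
\item The rational computation only says that each $\mu_2^i$ and $\lambda_1\mu_2^i$ with $i\geq 1$ supports some finite differential. It bounds no lengths.
\item The Leibniz rule gives $d_1(\mu_2^i)=i\,v_0\mu_2^{i-1}\lambda_2$, which vanishes when $p\mid i$. Your phrase ``modulo higher terms'' therefore hides all the content.
\end{itemize}
With only these tools, each length with $p\mid i$ is ambiguous by one. For example, you cannot decide whether $\THH_{2p^3-1}(\BP\langle 1\rangle;\mathbb{Z}_{(p)})$ is $\mathbb{Z}/p^2$ or $\mathbb{Z}/p^3$.

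A smaller slip: $\lambda_2$ cannot be lifted via $\THH(\mathbb{Z}_{(p)})$, since it maps to zero there. It is a permanent cycle simply because the $E_1$-page vanishes in degree $2p^2-2$.

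\emph{Standard fix.} The missing ingredient is that $\THH(\BP\langle 1\rangle;\mathbb{Z}_{(p)})$ is an $\mathbb{E}_\infty$-ring, together with May's lemma for Bockstein spectral sequences: if $d_r(x)=v_0^ry$, then $d_{r+1}(x^p)=v_0^{r+1}x^{p-1}y$. This holds for $p$ odd or $r\geq 2$; at $p=2$, $r=1$ there is an extra Dyer--Lashof term to control. You also need $d_1(\mu_2)=v_0\lambda_2$, which comes from the homology Bockstein commuting with $\sigma$. Together these give $d_{k+1}(\mu_2^{p^k})\dot{=}v_0^{k+1}\mu_2^{p^k-1}\lambda_2$. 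The Leibniz rule then handles $\mu_2^{p^kj}$ with $p\nmid j$, and the classes $\lambda_1\mu_2^i$.

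\emph{Alternative fix.} You can salvage your comparison idea, but with the bounds coming from the opposite sources.
\begin{itemize}
\item Lower bound, from Leibniz: if $x$ survives to $E_k$, then $d_j(x^p)=p\,x^{p-1}d_j(x)=0$ for $j\leq k$. Inductively, $\mu_2^i$ survives to $E_{\nu_p(i)+1}$.
\item Upper bound, from an integral comparison: the fiber sequence of Proposition~\ref{prop:brunss} for $n=1$ shows that $\THH_{2p^2i-1}(\BP\langle 1\rangle;\mathbb{Z}_{(p)})\to\THH_{2p^2i-1}(\mathbb{Z}_{(p)})\cong\mathbb{Z}/p^{\nu_p(i)+2}$ is injective, since $\THH_{2p^2i-2p}(\mathbb{Z}_{(p)})=0$.
\item The image lies in $p\cdot\mathbb{Z}/p^{\nu_p(i)+2}$. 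Its mod $p$ reduction factors through $\THH_{2p^2i-1}(\BP\langle 1\rangle;\mathbb{F}_p)=\mathbb{F}_p\{\mu_2^{i-1}\lambda_2\}$, which maps to zero.
\item This caps the order of $a_i$ at $p^{\nu_p(i)+1}$, and it even recovers $d_1(\mu_2)=v_0\lambda_2$ without homology Bocksteins.
\item Leibniz with the permanent cycle $\lambda_1$ then handles $b_i$.
\end{itemize}
This second route works uniformly in $p$ and avoids the $p=2$ power-operation issue.
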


\begin{theorem}[{\cite{ACH24}*{Theorem~3.8}}]\label{thm:BP2-coeff}
Suppose the error term \cite{ACH24}*{(3--7)} vanishes at $p>3$. There is an isomorphism 
\[ 
\THH_*(\BP\langle 2\rangle ;\mathbb{Z}_{(p)})=\mathbb{F}_p\langle \lambda_1 ,\lambda_2\rangle \otimes \left ( \mathbb{Z}_{(p)}\oplus T_0^2 \right )
\]
where 
\[ T_0^2 = \oplus_{s\ge 1} \mathbb{Z}/p^s\otimes \mathbb{Z}_{(p)}[\mu_3^{p^s}]\otimes \mathbb{Z}_{(p)}\{\lambda_{s+2}\mu_3^{jp^{s-1}} \mid 0\le j\le p-2 \} \,. 
\]
Here $|\mu_3|=2p^3$ and $|\lambda_{s}|=2p^{s}-1$. 
\end{theorem}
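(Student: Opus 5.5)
The plan is to run the $v_0$-Bockstein spectral sequence
\[
E_1=\THH_*(\BP\langle 2\rangle;\mathbb{F}_p)[v_0]\implies \THH_*(\BP\langle 2\rangle;\mathbb{Z}_{(p)})_p^{\wedge},
\]
which is the top horizontal spectral sequence of the square in Section~\ref{recollections}. Its input is $\mathbb{F}_p[\mu_3]\langle\lambda_1,\lambda_2,\lambda_3\rangle[v_0]$ with $\mu_3=\mu^{p^3}$. Since $\BP\langle 2\rangle$ is an $\mathbb{E}_3$-ring, $\THH(\BP\langle 2\rangle;\mathbb{Z}_{(p)})$ is a module over the $\mathbb{E}_2$-ring $\THH(\BP\langle 2\rangle)$. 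This makes the spectral sequence one of modules over the $v_0$-Bockstein spectral sequence for $\THH(\BP\langle 2\rangle)$, so the differentials satisfy a Leibniz rule with respect to permanent cycles. Because $p$-localization and $p$-completion agree on the finitely generated groups that appear, it suffices to compute the $p$-complete answer.

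\emph{Step 1: the permanent cycles.} The classes $\lambda_1,\lambda_2$ are images of classes in $\THH_*(\BP\langle 2\rangle)$; they are detected by $\sigma\xi_1$ and $\sigma\xi_2$ in mod $p$ homology, via the map $\THH(\BP\langle 2\rangle)\to \THH(\BP\langle 2\rangle;\mathbb{Z}_{(p)})\to \THH(\BP\langle 2\rangle;\mathbb{F}_p)$. They are therefore permanent cycles, and the whole spectral sequence splits as $\mathbb{F}_p\langle\lambda_1,\lambda_2\rangle$ tensored with the spectral sequence generated by $\mu_3$ and $\lambda_3$. This accounts for the exterior factor in the statement. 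The unit class in degree $0$ survives as the $\mathbb{Z}_{(p)}$ summand.

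\emph{Step 2: the differentials.} Set $\lambda_{s+2}:=\lambda_3\mu_3^{p^{s-1}-1}$, which has degree $2p^{s+2}-1$. I expect the pattern of B\"okstedt's computation of $\THH(\mathbb{Z})$:
\[
d_s(\mu_3^{jp^{s-1}})\doteq v_0^s\,\lambda_{s+2}\,\mu_3^{(j-1)p^{s-1}}
\]
for $p\nmid j$. For $s=1$ this is $d_1(\mu_3)\doteq v_0\lambda_3$, which I would get from the first Bockstein $\beta$ on mod $p$ homology: it sends the class detecting $\mu_3$ (namely $\sigma\bar\tau_3$) to the class detecting $\lambda_3$ (namely $\sigma\bar\xi_3$). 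The higher $d_s$ are then forced in the standard way. First, $\mu_3^{p^{s-1}}$ survives to $E_s$ by the Leibniz rule and the $p$-th power. Second, comparison along the $\mathbb{E}_2$-map $\BP\langle 2\rangle\to\mathbb{Z}_{(p)}$ constrains $d_s$, using $\mu_3\mapsto \mu_1^{p^2}$ and $\lambda_3\mapsto\lambda_1\mu_1^{p^2-1}$ together with the known differentials for $\THH(\mathbb{Z}_{(p)})$. Third, a degree count shows the only possible targets are $v_0^s\lambda_{s+2}$ times monomials.

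Reading off the $E_\infty$-page gives exactly $\mathbb{Z}/p^s$ summands generated by $\lambda_{s+2}\mu_3^{jp^{s-1}}$ for $0\le j\le p-2$, tensored with $\mathbb{Z}_{(p)}[\mu_3^{p^s}]$. This is $T_0^2$.

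\emph{Step 3: extensions.} The $v_0$-towers of length $s$ on the $E_\infty$-page must be resolved into cyclic groups $\mathbb{Z}/p^s$ rather than sums of smaller groups. This is where I expect the hypothesis that the error term \cite{ACH24}*{(3--7)} vanishes to be used at $p>3$. At $p\le 3$ the argument should go through for the chosen forms (the $\mathbb{E}_3$-$\MU$-algebra forms at $p=2$, $\taf^D$ at $p=3$).

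\emph{Main obstacle.} The hardest step is establishing the higher differentials $d_s$ for $s\ge2$ rigorously. The comparison map to $\THH(\mathbb{Z}_{(p)})$ lengthens differentials rather than shortening them: for example $\mu_3^{p^{s-1}}$ maps to $\mu_1^{p^{s+1}}$, which supports a $d_{s+2}$ in the target. So naturality alone does not pin down $d_s$ in the source. I would first try to obtain these differentials from the $\mathbb{E}_2$-structure, namely Dyer--Lashof operations and the relation between Bocksteins and $p$-th powers. Failing that, I would use the hypothesis on the error term, which is presumably exactly what rules out alternative patterns at large primes.
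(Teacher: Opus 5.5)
The paper gives no proof of this statement; it is quoted from \cite{ACH24}. Your route is the natural one: the $v_0$-Bockstein spectral sequence starting from $\mathbb{F}_p[\mu_3]\langle\lambda_1,\lambda_2,\lambda_3\rangle[v_0]$, with $d_1(\mu_3)\dot{=}v_0\lambda_3$. The $E_\infty$-page you predict does reproduce $T_0^2$. But the step that actually determines the answer is missing.

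Everything you invoke fixes the differentials only up to their lengths. The rational computation $\THH_*(\BP\langle 2\rangle;\mathbb{Z}_{(p)})\otimes\mathbb{Q}\cong\mathbb{Q}\langle\sigma v_1,\sigma v_2\rangle$, together with a degree count, forces each $\mu_3^{pk}$ ($k\ge 1$) to support some $d_r$ hitting $v_0^r\lambda_3\mu_3^{pk-1}$. The Leibniz rule then reduces everything to the lengths $r_s$ of the differentials on $\mu_3^{p^{s-1}}$. These satisfy $r_1=1$ and $r_{s+1}\ge r_s+1$, because $d_r(x^p)=p\,x^{p-1}d_r(x)=0$ for $r\le r_s$. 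The theorem is exactly the statement $r_s=s$, i.e.\ the upper bound $r_{s+1}\le r_s+1$, and nothing in your proposal supplies it. For instance, $r_2=3$ (giving $\mathbb{Z}/p^3$ in place of $\mathbb{Z}/p^2$) is not excluded.

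Naturality along $\BP\langle 2\rangle\to\mathbb{Z}_{(p)}$ gives nothing here. Moreover, your claim $\lambda_3\mapsto\lambda_1\mu_1^{p^2-1}$ is false. Naturality of $d_1$, together with $\mu_3\mapsto\mu_1^{p^2}$ and $d_1(\mu_1^{p^2})=p^2\mu_1^{p^2-1}v_0\lambda_1=0$, forces $\lambda_3\mapsto 0$. This is consistent with $\sigma\bar\xi_3=\beta(\sigma\bar\tau_3)=\beta(\mu_1^{p^2})=0$ in $H_*\THH(\mathbb{Z})$.

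The missing ingredient is a $p$-th power formula for Bockstein differentials. It says: if $x$ has even degree, survives to $E_r$, and $d_r(x)=v_0^r y\neq 0$, then $d_{r+1}(x^p)\dot{=}v_0^{r+1}x^{p-1}y$. For $\mathbb{E}_\infty$-rings this comes from May's analysis of power operations in Bockstein spectral sequences. Iterating it from $d_1(\mu_3)\dot{=}v_0\lambda_3$, and then applying Leibniz to $\mu_3^{jp^{s-1}}$ with $p\nmid j$, yields exactly your pattern.

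For an $\mathbb{E}_3$-form one only has the limited power operations coming from the $\mathbb{E}_n$-structure on the relevant $\THH$, and the formula acquires a correction term. That is the error term the hypothesis asks to vanish, which is why the paper remarks that it vanishes for $\mathbb{E}_\infty$-forms such as $\taf^D$. So the hypothesis belongs in your Step~2, not Step~3.

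Step~3 is in fact empty. The $E_\infty$-page of the $v_0$-Bockstein spectral sequence is the associated graded of the $p$-adic filtration, with $v_0$ acting as multiplication by $p$. For degreewise finitely generated $\mathbb{Z}_p$-modules this associated graded determines the group. Hence a $v_0$-tower of length $s$ is automatically a $\mathbb{Z}/p^s$ summand, and there are no additive extensions to resolve.
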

Note that the error term \cite{ACH24}*{(3--7)} vanishes for $\mathbb{E}_\infty$ algebras so the theorem holds unconditionally for $\BP\langle 2\rangle=\taf^{D}$ from~\cite{HL10}. 

The map 
\[ 
  \THH(\BP\langle 2\rangle ; \mathbb{F}_p)\to  \THH(\BP\langle 1\rangle ; \mathbb{F}_p) 
\]
sends $\lambda_s$ to $\lambda_s$ for $s=1,2$ and $\mu_3$ to $\mu_2^p$. For consistency, let $a_{pi+1}=\lambda_1\mu_3^{i}$ and $b_{pi+1}=\lambda_1\lambda_2\mu_3^{i}$ 

\begin{corollary}
  The map 
  \[ 
    \THH_*(\BP\langle 2\rangle ;\mathbb{Z}_{(p)})\longrightarrow \THH_*(\BP\langle 1\rangle ;\mathbb{Z}_{(p)})
  \]
  sends $\lambda_1$ to $\lambda_1$, and it sends $\lambda_1\mu_3^i$ to $a_{pi+1}$ and $\lambda_1\lambda_2\mu_3^i$ to $b_{pi+1}$. 

  The map 
  \[ 
    \THH_*(\BP\langle 2\rangle ;k(1))\longrightarrow \THH_*(\BP\langle 1\rangle ;k(1))
  \]
  sends $x_{n,m}$ to $x_{n,m}$ and $x'_{n,m}$ to $x'_{n,m}$ whenever this makes sense.
\end{corollary}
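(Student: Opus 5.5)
The plan is to prove the corollary by naturality. The map $\BP\langle 2\rangle\to\BP\langle 1\rangle$ of $\bE_2$-rings induces a map of $\THH$ compatible with any common choice of coefficients. I would therefore first compare with $\mathbb{F}_p$-coefficients and then pass through the Bockstein spectral sequences recalled above.

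The first step is to note that the map $\BP\langle 2\rangle\to \BP\langle 1\rangle\to \mathbb{Z}_{(p)}\to\mathbb{F}_p$ yields a commutative square relating $\THH(\BP\langle 2\rangle;\mathbb{Z}_{(p)})\to\THH(\BP\langle 1\rangle;\mathbb{Z}_{(p)})$ with the corresponding map with $\mathbb{F}_p$-coefficients. The same holds for $k(1)$ coefficients. Reduction modulo $p$ (respectively the $v_0$-Bockstein, and for $k(1)$ the $v_1$-Bockstein) gives a map of Bockstein spectral sequences
\[
\THH_*(\BP\langle 2\rangle;\mathbb{F}_p)[v_0]\Rightarrow \THH_*(\BP\langle 2\rangle;\mathbb{Z}_{(p)})\quad\longrightarrow\quad \THH_*(\BP\langle 1\rangle;\mathbb{F}_p)[v_0]\Rightarrow \THH_*(\BP\langle 1\rangle;\mathbb{Z}_{(p)}).
\]
On $E_1$-pages this map is determined by the stated formula $\lambda_s\mapsto\lambda_s$ for $s=1,2$ and $\mu_3\mapsto\mu_2^p$.

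Next I would trace the named classes. On the $E_1$-page, $\lambda_1\mu_3^i\mapsto\lambda_1\mu_2^{pi}$ and $\lambda_1\lambda_2\mu_3^i\mapsto\lambda_1\lambda_2\mu_2^{pi}$. By definition these targets are detected by $a_{pi+1}$ and $b_{pi+1}$; this is exactly the consistency convention just introduced. The class $\lambda_1$ is a permanent cycle generating the torsion-free summand on both sides, so it maps to $\lambda_1$.

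The main obstacle is passing from detection in the spectral sequence to an equality in homotopy. A priori, the image of $\lambda_1\mu_3^i$ equals $a_{pi+1}$ only modulo elements of higher Bockstein filtration, i.e.\ modulo $p$-multiples in the same degree. To remove this ambiguity, I would observe that in $\THH_*(\BP\langle 1\rangle;\mathbb{Z}_{(p)})$ the relevant degree is cyclic, generated by $a_{pi+1}$ or $b_{pi+1}$ (together with $\lambda_1$ only in degree $2p-1$). Hence the image is a unit multiple of the stated class plus a $p$-divisible correction. I would then redefine the target generator accordingly, which is legitimate since the classes $a_i,b_i$ were only specified up to this choice. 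For the $k(1)$-statement the argument is identical: the classes $x_{n,m},x'_{n,m}$ are named by their representatives in $\THH_*(-;\mathbb{F}_p)$ in the $v_1$-Bockstein spectral sequence of \cite{AR05}. The map on $E_1$-pages sends each representative to the representative of the same name, so the classes correspond ``whenever this makes sense'', modulo the same indeterminacy, which is absorbed by choice of generators.
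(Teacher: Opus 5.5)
Your route, pushing the $\mathbb{F}_p$-formula through the map of $v_0$-Bockstein spectral sequences, is the only justification the paper implicitly offers (it writes no proof). However, your step tracing the named classes fails.

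First, a degree check. By the recollection $a_j=\mu_2^{j-1}\lambda_2$, so $a_{pi+1}$ is represented by $\lambda_2\mu_2^{pi}$ in degree $2p^3i+2p^2-1$, whereas $\lambda_1\mu_2^{pi}$ sits in degree $2p^3i+2p-1$. Your sentence ``by definition these targets are detected by $a_{pi+1}$'' inherits the misprint $\lambda_1\mu_3^i$ for $\lambda_2\mu_3^i$. In fact $\THH_{2p^3i+2p-1}(\BP\langle 1\rangle;\mathbb{Z}_{(p)})=0$ for $i\ge 1$, so your claim that the relevant target degree is cyclic on $a_{pi+1}$ is also false.

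More seriously, even after this correction, naturality on $E_1$-pages says something about integral classes only if the sources are permanent cycles, and you never check this. For $i\ge 1$ they are not: $\mu_3^i$ supports the Bockstein differential hitting $v_0^{\nu_p(i)+1}\lambda_3\mu_3^{i-1}$, which is where the orders in $T_0^2$ come from. Concretely, Theorem~\ref{thm:BP2-coeff} shows that $\THH_*(\BP\langle 2\rangle;\mathbb{Z}_{(p)})$ vanishes in degrees $2p^3i+2p-1$, $2p^3i+2p^2-1$ and $2p^3i+2p^2+2p-2$ for $i\ge 1$. So there are no integral classes $\lambda_2\mu_3^i$ or $\lambda_1\lambda_2\mu_3^i$, and the nonzero order-$p$ classes $a_{pi+1}$, $b_{pi+1}$ are not in the image at all. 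Consistently, the $\mathbb{Z}_{(p)}$-coefficient Brun spectral sequence in Section~\ref{sec:SS} gives $d_1(a_{pi+1})\doteq p^{\nu_p(i)+1}a_{pi}\sigma v_2\neq 0$. The filtration indeterminacy you single out as the main obstacle is therefore not the real issue. For $i\ge 1$ the assertion makes sense only on mod $p$ representatives ($\lambda_2\mu_3^i\mapsto\lambda_2\mu_2^{pi}$), not as a statement about $\THH_*(-;\mathbb{Z}_{(p)})$.

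What your argument does establish is the $i=0$ case: $\lambda_1\mapsto\lambda_1$, $\lambda_2\mapsto a_1$ and $\lambda_1\lambda_2\mapsto b_1$, up to units. The groups in these degrees are $\mathbb{Z}_{(p)}$ or $\mathbb{Z}/p$, so the units can be absorbed.

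For the genuine integral torsion classes $\lambda_3\mu_3^k$, of order $p^{\nu_p(k+1)+1}$, the target group in that degree is $\mathbb{Z}/p^{\nu_p(k+1)+2}\{a_{p(k+1)}\}$, so their images are divisible by $p$. This is consistent with the later assertion that $T_0^2\to T_0^1$ has image $p\,T_0^1$. It also shows that $\lambda_3\mapsto 0$ mod $p$. Note that $\lambda_3$ is a generator of the $E_1$-page whose image the stated formula does not give, so the $E_1$-map is not ``determined by the stated formula'' as you claim.

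The $k(1)$-part has the same defect, and more:
\begin{itemize}
\item the classes $x_{n,m},x'_{n,m}$ are never defined in the paper;
\item the two $v_1$-Bockstein spectral sequences have different differentials (coming from $\mu_3$ versus $\mu_2$), so you must check which same-named representatives survive on both sides;
\item the indeterminacy there consists of $v_1$-multiples in non-cyclic groups, which cannot be absorbed by a choice of generator without checking compatibility with the relations those classes satisfy.
\end{itemize}
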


We recall the structure of $\THH_*(\BP \langle 1 \rangle) = \THH_*(\ell)$.

\begin{theorem}[{\cite[Sections 6.2--6.3]{AHL10}}]\label{thm:AHL}
  $\THH_*(\BP \langle 1 \rangle)$ is a quotient of the $\Z_{(p)}[v_1]$-module
  \begin{multline}
    \Z_{(p)}[v_1]\{1,\, \lambda_1,\, v_0^n a_{p^n},\,n\geq 0\} 
    \oplus \Z_{(p)}[v_1]\{v_0^h b_{\alpha p^n},\, n\geq 0,\, \alpha\geq 1,\, p\nmid \alpha,\, h \geq 0\}
  \end{multline}
  by the relations in the non-torsion part:
  \begin{itemize}
  \item $p\cdot a_1 = v_1^p \lambda_1$,
  \item $p\cdot v_0^n a_{p^n} = v_1^{p^{n+1}}v_0^{n-1} a_{p^{n-1}}$ for any $n\geq 1$,
  \end{itemize}
  and the relations in the torsion part:
  \begin{itemize}
  \item $v_0^{h} b_{\alpha p^n} = 0$ for any $\alpha \geq 1$ and $n \geq 0$, $\alpha$ not divisible by $p$, and $h \geq n + 1$,
  \item $v_1^{p^{n-h+1}+p^{n-h}+\dots+p }\cdot v_0^h b_{\alpha p^n} = 0$ for any $\alpha \geq 1$ and $n \geq 0$, $\alpha$ not divisible by $p$ and  $0 \leq h \leq n$,
  \item $p\cdot b_{(\beta p+p-1)p^n} = v_0 b_{(\beta p+p-1)p^n}+v_1^{p^{n+2}}v_0^{\nu_p(\beta)} b_{\beta p^{n+1}}$ for any $\beta \geq 1$ and $n \geq 0$. Here $\nu_p$ denotes $p$-adic valuation.
  \item $p\cdot v_0^h b_{\alpha p^n} = v_0^{h+1} b_{\alpha p^n}$ for any $\alpha \geq 1$, $n \geq 0$, $\alpha$ not divisible by $p$, and any $1 \leq h \leq n$, or $h=0$ not in the previous case.
  \end{itemize}
Here $|\lambda_1|=2p-1$, $|v_0^na_{p^n}|=2p^{n+2}-1$, $|v_1|=2p-2$ and $|v_0^hb_{\alpha p^n}|=2p^{n+2}\alpha+2p-2$.
\end{theorem}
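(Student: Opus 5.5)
The plan is to reconstruct the computation of \cite{AHL10} by running the bottom-right corner of the square of Bockstein spectral sequences displayed at the start of this section for $n=1$. Concretely, I would start from $\THH_*(\BP\langle 1\rangle;\mathbb{Z}_{(p)})$ as recalled in the first theorem of this section. It consists of a free $\mathbb{Z}_{(p)}$ on $1$ and $\lambda_1$, together with torsion classes $a_i,b_i$ of order $p^{\nu_p(i)+1}$. I would then run the $v_1$-Bockstein spectral sequence
\[
\THH_*(\BP\langle 1\rangle;\mathbb{Z}_{(p)})[v_1]\implies \THH_*(\BP\langle 1\rangle)\,.
\]
As a cross-check, in parallel I would run the other route. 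It starts from the McClure--Staffeldt computation of $\THH_*(\BP\langle 1\rangle;k(1))$, which has classes $x_{n,m},x'_{n,m}$ and $v_1$-torsion of height $p+p^2+\cdots$. One then runs the $v_0$-Bockstein spectral sequence. The two answers must agree, and the cross-check pins down the differentials and extensions that are ambiguous in one route alone.

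The key steps, in order, are as follows. First, determine the $v_1$-Bockstein differentials. The classes $\mu_2^{j}$-multiples supporting the torsion $b_{\alpha p^n}$ get truncated by differentials of length $p^{n-h+1}+\cdots+p$. This is forced by comparison with $\THH_*(\BP\langle 1\rangle;k(1))$ under the map $\mathbb{Z}_{(p)}\to\mathbb{F}_p$ together with the $k(1)$-computation, and it gives the relations $v_1^{p^{n-h+1}+\dots+p}v_0^hb_{\alpha p^n}=0$. Second, observe that the classes $1,\lambda_1,a_{p^n}$ are $v_1$-torsion free in the abutment, because rationally $\THH(\ell)\otimes\mathbb{Q}$ is an exterior algebra over $\ell_{\mathbb{Q}}$ on a class in degree $2p-1$. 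This rational computation forces the torsion-free rank in each degree $2p^{n+2}-1$, and hence forces the hidden extensions $p\cdot a_1=v_1^p\lambda_1$ and $p\cdot v_0^na_{p^n}=v_1^{p^{n+1}}v_0^{n-1}a_{p^{n-1}}$. I would detect these by comparing orders: $a_{p^n}$ has order $p^{n+1}$ mod $v_1$, but the abutment has no torsion in that degree beyond what the rational rank permits. Third, resolve the additive $p$-extensions on the torsion part, namely the relations $p\cdot v_0^hb=v_0^{h+1}b$ and the twisted relation $p\cdot b_{(\beta p+p-1)p^n}=v_0b+v_1^{p^{n+2}}v_0^{\nu_p(\beta)}b_{\beta p^{n+1}}$. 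The naming $v_0^hb_{\alpha p^n}$ records that the $v_0$-Bockstein from the $k(1)$-side detects $p^h b_{\alpha p^n}$.

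The main obstacle is this third step, and in particular the exotic relation mixing $b_{(\beta p+p-1)p^n}$ with $v_1^{p^{n+2}}b_{\beta p^{n+1}}$. Neither Bockstein spectral sequence alone sees it, since it is a hidden extension crossing both filtrations. To settle it, I would compare with the map $\THH(\ell)\to\THH(\ell;\mathbb{Z}_{(p)})$, using the relation $p\cdot b_{\beta p^{n+1}}$ versus $v_1$-multiplication. If needed, I would also use the circle action via $\sigma$, which gives $\sigma$-derivations relating $\lambda_1\mu_2^{k}$-type classes to $b$-classes and so transports the known extension on $a$'s to the $b$'s. Degree bookkeeping then gives the stated degrees: $|v_0^na_{p^n}|=2p^{n+2}-1$ and $|v_0^hb_{\alpha p^n}|=2p^{n+2}\alpha+2p-2$.
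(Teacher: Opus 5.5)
The paper does not prove this statement; it imports it from \cite{AHL10}. Your outline uses the same framework as that source: the $v_1$-Bockstein spectral sequence out of $\THH_*(\ell;\Z_{(p)})$, cross-checked against McClure--Staffeldt's $\THH_*(\ell;k(1))$. As an argument, though, it has a genuine gap in your third step, which is where the content lies.

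The relation $p\cdot b_{(\beta p+p-1)p^n}=v_0b_{(\beta p+p-1)p^n}+v_1^{p^{n+2}}v_0^{\nu_p(\beta)}b_{\beta p^{n+1}}$ is a hidden extension into positive $v_1$-filtration, and neither tool you name produces it.
\begin{itemize}
\item The map $\THH(\ell)\to\THH(\ell;\Z_{(p)})$ is $\ell$-linear with target an $H\Z_{(p)}$-module. It therefore kills every $v_1$-multiple and only returns $p\cdot b=v_0b$, which is already visible on $E_\infty$.
\item The circle action raises degree by one. Neither $|\sigma a_j|=2p^2j$ nor $|\sigma(\lambda_1\mu_2^k)|=2p^2k+2p$ is ever a $b$-degree $2p^2i+2p-2$, so $\sigma$ does not relate those classes to the $b$'s in the way you describe.
\end{itemize}
What is missing is to make your second route do actual work rather than invoking it as a slogan. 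The cofiber sequence $\THH(\ell)\xrightarrow{p}\THH(\ell)\to\THH(\ell;k(1))$, together with the explicit answer of \cite{MS93}, gives $\dim_{\F_p}\THH_D(\ell)/p+\dim_{\F_p}{}_p\THH_{D-1}(\ell)$ in every degree. A count of this kind is what detects such extensions. For example, take $p$ odd and $D=|b_{2p-1}|$. The $E_\infty$-page there consists exactly of $b_{2p-1}$ in filtration $0$ and $v_1^{p^2}b_p$ in filtration $p^2$, and one must decide between $\Z/p\oplus\Z/p$ and the asserted $\Z/p^2$. After that you still have to choose lifts realizing the stated $v_1$-heights. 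For $n\ge 1$, the relation says precisely that $p\cdot b_{(\beta p+p-1)p^n}$ has strictly larger $v_1$-height than the generator $v_0b_{(\beta p+p-1)p^n}$.

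Your second step is also not justified as written. Rational rank one in degree $2p^{n+2}-1$ does not force $p\cdot a_1=v_1^p\lambda_1$ or $p\cdot v_0^na_{p^n}=v_1^{p^{n+1}}v_0^{n-1}a_{p^{n-1}}$. It is equally compatible with $v_0^na_{p^n}$ and $v_1^{p^{m+2}+\dots+p^{n+1}}v_0^ma_{p^m}$ ($0\le m<n$) splitting off as $\Z/p$-summands beside the free class $v_1^{p+\dots+p^{n+1}}\lambda_1$. What excludes this is that these $E_\infty$-towers are $v_1$-torsion free, so split $p$-torsion there would survive inverting $v_1$. That contradicts McClure--Staffeldt's equivalence $L_{K(1)}\THH(\ell)\simeq L_{K(1)}\ell\vee\Sigma(L_{K(1)}\ell)_{\mathbb{Q}}$, whose homotopy is torsion free; the same count against $\THH_*(\ell;k(1))$ would also do.

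Finally, the $v_1$-torsion in $\THH_*(\ell;k(1))$ does not have heights $p+p^2+\cdots$. The McClure--Staffeldt differentials have lengths $p,\,p^2,\,p^3+p,\,p^4+p^2,\dots$; for instance, $\lambda_2$ has height $p^2$. The heights $p+\dots+p^m$ belong to the integral answer, so your cross-check has to be set up with the correct mod $p$ input.
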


We also recall the computation of the rationalization of topological Hochschild homology of truncated Brown--Peterson spectra. 

\begin{proposition}[{\cite{ACH24}*{Proposition~3.7}}]\label{prop:rational}
Let $0\le m\le n$. There is a preferred isomorphism 
\[ 
\mathbb{Q}[v_1,\cdots ,v_m]\langle\sigma v_1,\sigma v_2,\cdots ,\sigma v_n) \cong \THH_*(\BP\langle n\rangle ; \BP\langle m\rangle)_{\mathbb{Q}} 
\]
where we write $X_{\mathbb{Q}}$ for the Bousfield localization at the Eilenberg--MacLane $H\mathbb{Q}$. 
\end{proposition}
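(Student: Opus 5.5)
The statement is Proposition~\ref{prop:rational}, cited from \cite{ACH24}; my plan is to reduce it to a rational computation in graded commutative algebra. Rationalization commutes with the relative tensor products defining $\THH$. Hence
\[
\THH(\BP\langle n\rangle;\BP\langle m\rangle)_{\mathbb{Q}}\simeq \THH(\BP\langle n\rangle_{\mathbb{Q}};\BP\langle m\rangle_{\mathbb{Q}}),
\]
where the right-hand side is computed relative to $\mathbb{S}_{\mathbb{Q}}=H\mathbb{Q}$. Since $\pi_*\BP\langle n\rangle_{\mathbb{Q}}=\mathbb{Q}[v_1,\dots,v_n]$ is a free graded commutative $\mathbb{Q}$-algebra, I will argue that $\BP\langle n\rangle_{\mathbb{Q}}$ is equivalent, as an $\mathbb{E}_1$- (indeed $\mathbb{E}_\infty$-) $H\mathbb{Q}$-algebra, to the free commutative $H\mathbb{Q}$-algebra on even classes $v_1,\dots,v_n$. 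The equivalence is obtained by mapping the free algebra in on the chosen homotopy classes; this map is a $\pi_*$-isomorphism. For $\mathbb{E}_1$-purposes even the free $\mathbb{E}_1$ map followed by the abelianization is unnecessary: rational $\mathbb{E}_1$-algebras with polynomial even homotopy are formal. Similarly, the map $\BP\langle n\rangle_{\mathbb{Q}}\to \BP\langle m\rangle_{\mathbb{Q}}$ is identified with the quotient killing $v_{m+1},\dots,v_n$, because it is determined on homotopy.

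With formality in hand, $\THH$ over $H\mathbb{Q}$ of $A=\mathbb{Q}[v_1,\dots,v_n]$ is computed by the Hochschild--Kostant--Rosenberg theorem: $\mathrm{HH}_*(A/\mathbb{Q})\cong A\otimes \Lambda(\sigma v_1,\dots,\sigma v_n)$ with $|\sigma v_i|=|v_i|+1$. I would set this up as follows. First, $\THH(A;M)=M\otimes_{A\otimes A}A$. Resolve $A$ over $A\otimes A$ by the Koszul complex on the regular sequence $v_i\otimes 1-1\otimes v_i$; this gives
\[
\pi_*\THH(A;M)\cong M_*\otimes \Lambda(\sigma v_1,\dots,\sigma v_n)
\]
for any module $M$ on which the two actions agree. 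Taking $M=\mathbb{Q}[v_1,\dots,v_m]$ yields the claimed isomorphism. Here $\sigma v_i$ is the image of $v_i$ under the suspension $\Sigma A\to \THH(A)$ coming from the circle action, which is what makes the isomorphism preferred. The Künneth/Tor spectral sequence collapses because the answer is free over the coefficients, and there are no multiplicative extension problems: over $\mathbb{Q}$ the odd classes square to zero by graded commutativity.

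The main obstacle is making the preferred isomorphism precise and compatible across the maps $\BP\langle n\rangle\to\BP\langle m\rangle$, particularly the formality step and the independence of the choice of classes $v_i$. I would handle this by defining $\sigma v_i$ intrinsically through the map $\Sigma\BP\langle n\rangle\to\THH(\BP\langle n\rangle)$. Rationally the resulting map from the free graded commutative algebra is then a well-defined homomorphism. It is checked to be an isomorphism by a dimension count in each degree, against the Koszul computation, which sidesteps choosing formality equivalences.
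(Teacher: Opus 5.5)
\paragraph{The formality step fails.} The paper gives no argument here; it imports the statement from \cite{ACH24}. So I am judging your proof on its own terms, and the formality step does not work as written.

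\emph{The free commutative algebra cannot be mapped in.} The universal property of the free commutative $H\mathbb{Q}$-algebra only produces maps into $\mathbb{E}_\infty$-algebras, and $\BP\langle n\rangle$ is only assumed to be $\mathbb{E}_3$. The free $\mathbb{E}_1$- or $\mathbb{E}_3$-algebra on $n\ge 2$ even classes has strictly larger homotopy: a tensor algebra in the first case, and nonzero brackets $[v_i,v_j]$ in the second. So neither maps in by a $\pi_*$-isomorphism.

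\emph{Your fallback is false.} You claim that rational $\mathbb{E}_1$-algebras with polynomial even homotopy are formal. The obstructions to formality of an $A_\infty$-structure on $H=\mathbb{Q}[x_1,\dots,x_n]$ lie in Hochschild cohomology of Hochschild degree $k\ge 3$ and internal degree $k-2$. By HKR these are $k$-polyvector fields of that degree. Parity kills odd $k$, and nothing survives for $k>n$. But for $n\ge 4$, say $|x_i|=2$, take $\pi=f\,\partial_1\wedge\partial_2\wedge\partial_3\wedge\partial_4$ with $|f|=10$; at $p=2$ one can even use the degrees of $v_1,\dots,v_4$ with $f=v_1^{27}$.
\begin{itemize}
\item This $\pi$ is unobstructed, because all further obstructions sit in Hochschild degree $\ge 7$, where the cohomology vanishes.
\item It yields a non-formal structure with $[m_4]\neq 0$; this class is invariant because the even-arity components of $A_\infty$-isomorphisms vanish by parity.
\item For this algebra Hochschild homology is strictly smaller than $H\otimes\Lambda(\sigma x_1,\dots,\sigma x_4)$. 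The $m_4$-part of the Hochschild differential sends the HKR cycle of $\sigma x_1\sigma x_2\sigma x_3\sigma x_4$ to a nonzero multiple of $1\otimes f$, which represents $df\neq 0$.
\end{itemize}
So the proposition cannot follow from the homotopy ring plus an $\mathbb{E}_1$-structure. For the same reason, your claim that ``the map $\BP\langle n\rangle_{\mathbb{Q}}\to\BP\langle m\rangle_{\mathbb{Q}}$ is determined on homotopy'' is unjustified.

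\emph{The sidestep is circular.} In your last paragraph, the degreewise dimensions you compare against come from the Koszul computation on the formal model. That is, they come from the very formality you wanted to avoid.

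\paragraph{What is missing.} You need an input that uses the structure $\BP\langle n\rangle$ actually has.

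\emph{Formality via $\MU$.} By the paper's conventions, the unit $\MU\to\BP\langle n\rangle$ is a map of $\mathbb{E}_3$-rings, and $\MU_{\mathbb{Q}}$ is $\mathbb{E}_\infty$. Take the $\mathbb{E}_\infty$-map $\mathrm{Sym}_{H\mathbb{Q}}(\bigoplus_{i\le n}\Sigma^{|v_i|}H\mathbb{Q})\to\MU_{\mathbb{Q}}$ hitting the images of $v_i\in\BP_*\subset\MU_{(p)*}$, and follow it by the unit. The composite is an $\mathbb{E}_3$-map that is an isomorphism on $\pi_*$. This is the formality you want, and it gives $\THH_*(\BP\langle n\rangle)_{\mathbb{Q}}\cong\mathbb{Q}[v_1,\dots,v_n]\langle\sigma v_1,\dots,\sigma v_n\rangle$.

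\emph{Coefficients.} Since $\BP\langle n\rangle\to\BP\langle m\rangle$ is an $\mathbb{E}_2$-map, $\THH(\BP\langle n\rangle;\BP\langle m\rangle)\simeq\BP\langle m\rangle\otimes_{\BP\langle n\rangle}\THH(\BP\langle n\rangle)$. The K\"unneth spectral sequence for this now collapses because $\THH_*(\BP\langle n\rangle)_{\mathbb{Q}}$ is free over $\mathbb{Q}[v_1,\dots,v_n]$. This is where your ``free over the coefficients'' reasoning is valid. Over $A\otimes A$, by contrast, $\Tor$ sits in homological degrees $0,\dots,n$, and freeness of the answer does not rule out differentials.

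\emph{Alternative route.} You could instead run the rational B\"okstedt spectral sequence with $E^2=\mathbb{Q}[v_1,\dots,v_m]\langle\sigma v_1,\dots,\sigma v_n\rangle$. It is multiplicative, or a module over the one for $\THH(\BP\langle n\rangle)$, because $\BP\langle n\rangle$ is at least $\mathbb{E}_2$. It is generated by permanent cycles in filtrations $0$ and $1$, so it collapses.

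\emph{The case the paper uses.} For $n\le 2$, the only case used later in the paper, your formality claims can be rescued by the same HKR-and-parity count, since all relevant Hochschild groups vanish for the algebra and for the map. But you have to make that argument, and it does not extend to general $n$.
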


%% file: bound.tex
\section{Bounding Hochschild homology of \texorpdfstring{$\BP\langle n\rangle$}{BP n }}\label{sec:bounding}
The goal of this section is to prove the following result.  
\begin{proposition}\label{prop:brunss}
There is a multiplicative Brun spectral sequence 
\begin{equation}\label{BrunSS}
\THH_*(\BP\langle n-1 \rangle)\langle \sigma v_{n}\rangle \implies \THH_*(\BP\langle n\rangle,\BP\langle n-1\rangle)\,.
\end{equation}
which is a spectral sequence of $(\BP\langle n-1 \rangle\otimes_{\BP\langle n\rangle}\BP\langle n-1 \rangle)_*$-algebras and the differentials are $(p,v_1,\cdots ,v_{n-1})$-linear and $\sigma v_{n}$-linear. The $d_1$-differential is given by the cap product of a certain class in $\THC^{2p^{n+1}}(\BP\langle n-1\rangle)$.  
\end{proposition}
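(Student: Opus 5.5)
Write $R=\BP\langle n\rangle$ and $k=\BP\langle n-1\rangle$. The plan is to factor $\THH(R;k)$ through a base change along $R\to k$ and then filter the resulting "Tor-coalgebra" piece.

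\textbf{Step 1: base change.} Since $R\to k$ is a map of $\bE_2$-rings (in fact of $\bE_3$-$\MU$-algebras under our Running Assumption), $k$ is an $R\otimes R^{\op}$-module through $k\otimes k^{\op}$. Hence
\[
\THH(R;k)\simeq k\otimes_{k\otimes k^{\op}}\bigl((k\otimes k^{\op})\otimes_{R\otimes R^{\op}}R\bigr).
\]
Using the $\bE_2$-structure, this can be rewritten as
\[
\THH(R;k)\simeq \THH(k)\otimes_{k}\bigl(k\otimes_R k\bigr)\ \text{``twisted''},
\]
or more precisely as
\[
\THH(R;k)\simeq \THH(k)\otimes_{\THH(R)}R.
\]
This is the standard Brun-type identification: for an $\bE_2$-map $A\to B$ one has $\THH(A;B)\simeq \THH(B)\otimes_{B\otimes_A B}B$.

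\textbf{Step 2: compute $k\otimes_R k$.} There is a cofiber sequence
\[
\Sigma^{2p^n-2}R\xrightarrow{v_n}R\to k,
\]
which is a cofiber sequence of $R$-modules, compatible with the $\MU$-algebra structures. It follows that
\[
\pi_*(k\otimes_Rk)=k_*\langle \sigma v_n\rangle,\qquad |\sigma v_n|=2p^n-1,
\]
an exterior algebra, because $\Tor^{R_*}(k_*,k_*)$ is exterior on a single class.

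\textbf{Step 3: filter and read off the $E_1$-page.} Filter $k\otimes_Rk$ by its cellular (Postnikov/$\sigma v_n$-adic) filtration as a $k$-bimodule. This has two stages, with associated graded $k\oplus\Sigma^{2p^n-1}k$. Feeding this filtration into the bar construction identifying $\THH(R;k)$ with $\THH(k)\otimes_{k\otimes_Rk}k$ (equivalently, filtering by powers of the augmentation ideal) yields a multiplicative spectral sequence with
\[
E_1=\THH_*(k)\otimes\Gamma\text{ or }\langle\sigma v_n\rangle.
\]
Dually, one takes the Koszul filtration of $k$ as a $k\otimes_Rk$-module. I expect that the exterior form stated in the proposition arises because $\THH(R;k)$ is a two-cell object over $\THH(k)$: indeed $\THH(R;k)\simeq\THH(k)\otimes_{R}$-relative cofiber of $\THH$ of the $v_n$ self-map, i.e. there is a cofiber sequence
\[
\THH(k)\to\THH(R;k)\to\Sigma^{2p^n-1}\THH(k),
\]
whose connecting map is the $d_1$.

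Multiplicativity and the algebra structure over $(k\otimes_Rk)_*$ come from performing the construction in $\bE_1$-algebras over $k\otimes_Rk$, which is possible because the maps are $\bE_2$. The $(p,v_1,\dots,v_{n-1})$-linearity follows because the differentials are maps of $k_*$-modules. Linearity in $\sigma v_n$ is the module structure over $(k\otimes_Rk)_*$.

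\textbf{Step 4: identify $d_1$ (main obstacle).} The connecting map
\[
\Sigma^{2p^n-1}\THH(k)\to\Sigma\THH(k)
\]
is a $\THH(k)$-module map induced by a $k\otimes k^{\op}$-module map $k\to\Sigma^{2p^{n}-2+?}k$, namely the "$k$-bimodule Bockstein" classifying the extension $k\otimes_Rk$ of $k$ by $\Sigma^{2p^n-1}k$ as bimodules. Such a self-map is precisely a class in $\THC^*(k)=\pi_{-*}\map_{k\otimes k^{\op}}(k,k)$, and its effect on $\THH(k)=k\otimes_{k\otimes k^{\op}}k$ is the cap product. I would carefully compute the degree from the grading of the cofiber sequence; the claimed degree $2p^{n+1}$ does not obviously match $2p^n$, so I would expect the bimodule extension class to be detected after a suspension shift involving the Hochschild cohomology class dual to $\mu$ (degree $2p^{n+1}$ relative to $\lambda_{n+1}$). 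Reconciling these gradings is where I expect the real work to be, along with verifying that the identification is natural enough to give cap product rather than just some module map.
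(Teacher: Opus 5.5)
The outline is right: a two-stage filtration of $k\otimes_R k$ by $k$-bimodules, with $d_1$ given by the bimodule $k$-invariant. The steps you actually write down, however, do not establish it.

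\textbf{Step 1 and Step 3.} Both identifications in Step 1 are false. For $R=\mathbb{S}$, $\THH(k)\otimes_{\THH(R)}R$ returns $\THH(k)$ rather than $\THH(\mathbb{S};k)=k$; in the commutative case it is the relative $\THH^R(k)$. The ``Brun-type'' formula is backwards. Take $A=\mathbb{S}$, where $\THH(\mathbb{S};B)=B$ and $B\otimes_{B\otimes B}B=\THH(B)$: this shows that what holds is $\THH(B)\simeq\THH(A;B)\otimes_{B\otimes_AB}B$.

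This is why your Step 3 cannot decide between a divided-power and an exterior $E_1$-page. A bar or Koszul filtration of such a relative tensor product gives a K\"unneth-type spectral sequence, not the one in the proposition.

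The identity to use, and the one the paper uses, is $\THH(R;k)\simeq k\otimes_{k\otimes k^{\op}}(k\otimes_Rk)=\THH(k;k\otimes_Rk)$. It is exact in the coefficient bimodule. You also need the input you only assert: the fiber $F$ of the multiplication map $k\otimes_Rk\to k$ is $\Sigma^{2p^n-1}k$ as a $k$-bimodule. The paper gets this from $k\simeq R/v_n$: since $v_n$ acts by zero on $k$, $k\otimes_R R/v_n\simeq k/v_n\simeq k\vee\Sigma^{2p^n-1}k$, and likewise on the right. This is not the Postnikov filtration, whose layers are Eilenberg--MacLane spectra.

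\textbf{Direction of the cofiber sequence.} Applying $\THH(k;-)$ to $F\to k\otimes_Rk\to k$ gives
\[
\Sigma^{2p^n-1}\THH(k)\to\THH(R;k)\to\THH(k),
\]
which is the reverse of the sequence you wrote. The natural map is $\THH(R;k)\to\THH(k;k)$, induced by $R\to k$; there is no natural map $\THH(k)\to\THH(R;k)$. With your ordering, $d_1$ would run from the $\sigma v_n$-multiples to the non-multiples, which is the wrong way.

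\textbf{The degree of the class.} Once the sequence is oriented correctly, the degree question resolves itself. The boundary map is $\THH(k)\to\Sigma^{2p^n}\THH(k)$. It is obtained by applying $k\otimes_{k\otimes k^{\op}}(-)$ to the $k$-invariant $k\to\Sigma F\simeq\Sigma^{2p^n}k$. That $k$-invariant is a bimodule map, hence a class in $\THC^{2p^n}(k)$, and functoriality of $\THH(k;-)$ in the coefficients is exactly its cap product.

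So your count of $2p^n$ was right, and no suspension shift or $\lambda_{n+1}$ is involved. The paper's own proof produces precisely this map $k\to\Sigma^{2p^n}k$, i.e.\ a lift of $(\mu^{p^n})^\vee$. Its examples confirm the degree: for $n=0$ the class is dual to $\mu\in\THH_2(\F_p)$, and for $n=1$ it generates $\THC^{2p}(\Z)$. The superscript $2p^{n+1}$ in the statement is an index slip. It is carried over from the class $e_{n+1}\in\THC^{2p^{n+1}}(\BP\langle n\rangle)$, which is built for $\BP\langle n\rangle$ rather than $\BP\langle n-1\rangle$.

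\textbf{Multiplicativity and linearity.} These should then be argued for this two-step filtration, not for the bar construction of your Step 3. The paper uses that the fiber sequence is one of left $k\otimes_Rk$-modules, and that the $\THC$-class acts as an $\mathbb{E}_1$-derivation.
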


To prove this, we first need the following lemma. 
\begin{lemma}\label{lem:BPn-rel-cooperations}
Suppose $-1\le m\le n$. We compute that 
\[\pi_*\BP\langle m \rangle\otimes_{\BP\langle n\rangle}\BP\langle m \rangle=\mathbb{Z}_{(p)}[v_1,\cdots ,v_{m}]\langle \sigma v_{m+1} , \cdots ,\sigma v_{n} \rangle \,. \]
when $m \geq 0$ and 
\[
\pi_*\BP\langle -1 \rangle\otimes_{\BP\langle n\rangle}\BP\langle -1\rangle=\mathbb{F}_p\langle \sigma v_{m+1} , \cdots ,\sigma v_{n} \rangle \,. 
\]
\end{lemma}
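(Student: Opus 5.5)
We compute $\pi_*(\BP\langle m\rangle\otimes_{\BP\langle n\rangle}\BP\langle m\rangle)$ with the Künneth (Tor) spectral sequence
\[
E^2_{s,t}=\Tor^{\BP\langle n\rangle_*}_{s,t}(\BP\langle m\rangle_*,\BP\langle m\rangle_*)\implies \pi_{s+t}\bigl(\BP\langle m\rangle\otimes_{\BP\langle n\rangle}\BP\langle m\rangle\bigr).
\]
Since $\BP\langle n\rangle$ is at least $\mathbb{E}_2$ and $\BP\langle n\rangle\to\BP\langle m\rangle$ is a map of $\bE_2$-rings, the relative smash product is an $\bE_1$-ring. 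This spectral sequence is therefore multiplicative.

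By our conventions, the map $\BP\langle n\rangle_*=\Z_{(p)}[v_1,\dots,v_n]\to\BP\langle m\rangle_*$ kills $v_{m+1},\dots,v_n$ and is the identity on $v_1,\dots,v_m$. For $m=-1$ it also kills $v_0=p$. The elements $v_{m+1},\dots,v_n$ (together with $p$ when $m=-1$) form a regular sequence in $\BP\langle n\rangle_*$. So $\BP\langle m\rangle_*$ has a Koszul resolution, and the $E^2$-term is the exterior algebra
\[
\BP\langle m\rangle_*\langle \sigma v_{m+1},\dots,\sigma v_n\rangle,
\]
where $\sigma v_i$ has bidegree $(1,|v_i|)$, hence total degree $2p^i-1$. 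For $m=-1$ this gives $\F_p\langle \sigma v_0,\sigma v_1,\dots,\sigma v_n\rangle$. I read the second formula in the statement with $m=-1$, so the generators are $\sigma v_0,\dots,\sigma v_n$, including $\sigma p$.

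The main step is to show that the spectral sequence collapses and that there are no multiplicative or additive extensions. The exterior generators sit in filtration $1$, so they cannot support differentials: $d_r$ lowers filtration by $r\ge 2$. The algebra generators of the $E^2$-term in filtration $0$ are permanent cycles, since they come from the unit map $\BP\langle m\rangle\to \BP\langle m\rangle\otimes_{\BP\langle n\rangle}\BP\langle m\rangle$. Multiplicativity then gives $E^2=E^\infty$.

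For extensions, I would realize each $\sigma v_i$ by a genuine homotopy class, namely the nullhomotopy of $v_i$ in $\BP\langle m\rangle$ paired with the canonical nullhomotopy of $v_i$ on the other side. Equivalently, I would use the cofiber sequence of $\BP\langle n\rangle$-modules $\Sigma^{|v_i|}\BP\langle n\rangle\xrightarrow{v_i}\BP\langle n\rangle\to\BP\langle n\rangle/v_i$ inductively: write $\BP\langle m\rangle\simeq\BP\langle n\rangle/(v_{m+1},\dots,v_n)$ as an iterated cofiber of $\BP\langle n\rangle$-modules. Then
\[
\BP\langle m\rangle\otimes_{\BP\langle n\rangle}\BP\langle m\rangle\simeq \BP\langle m\rangle/(v_{m+1},\dots,v_n),
\]
with each $v_i$ acting by zero on $\BP\langle m\rangle_*$. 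Since $v_i$ acts by zero, each quotient splits additively as $\BP\langle m\rangle\oplus\Sigma^{|v_i|+1}\BP\langle m\rangle$. This yields the free $\BP\langle m\rangle_*$-module on the monomials in the $\sigma v_i$, which rules out additive extensions.

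It remains to check $(\sigma v_i)^2=0$. For odd primes this is automatic by graded commutativity, since $\pi_*$ of an $\bE_2$-ring is graded commutative and $\sigma v_i$ has odd degree. I expect the real obstacle to be $p=2$, where odd classes in an $\bE_2$-ring need not square to zero. There $(\sigma v_i)^2$ lies in degree $2|v_i|+2$, which is even, and a nonzero value is a possible extension. I would first argue by degree and filtration: $(\sigma v_i)^2$ has filtration at most $2$, so it is a $\BP\langle m\rangle_*$-combination of products $\sigma v_j\sigma v_k$ with $j<k$ plus lower-filtration terms, and the degrees must match. If that does not suffice, I would compare with the rationalization (as in Proposition~\ref{prop:rational}) and with the case $m=-1$ via Steinberger's Dyer–Lashof computations. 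Alternatively, I would note that the statement presumably only needs the exterior algebra as a $\BP\langle m\rangle_*$-module, with the multiplicative structure identified only on the $E^\infty$-page.
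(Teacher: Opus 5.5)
Your proposal follows the paper's proof: the K\"unneth spectral sequence with Koszul $E^2$-term $\BP\langle m\rangle_*\langle \sigma v_{m+1},\dots,\sigma v_n\rangle$ collapses because the algebra generators sit in filtrations $0$ and $1$, and freeness over $\BP\langle m\rangle_*$ rules out additive extensions (this is the paper's ``$\mathbb{Z}_{(p)}[v_1,\dots,v_m]$ splits off''), while the paper does not address the multiplicative question you flag at $p=2$ either. One correction: as you say yourself, the relative tensor product is only $\mathbb{E}_1$, so at odd $p$ you cannot appeal to graded commutativity, but you also do not need it, since $(\sigma v_i)^2$ (and likewise $\sigma v_j\sigma v_k+\sigma v_k\sigma v_j$) vanishes in $E^\infty$ and hence lies in filtration at most $1$, whose associated graded is concentrated in degrees $\equiv 0,1 \pmod{2(p-1)}$, whereas these classes lie in degree $\equiv 2 \pmod{2(p-1)}$.
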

\begin{proof}
This follows directly from the K\"unneth spectral sequence
\[ 
\Tor_*^{\BP\langle n\rangle_*}(\pi_*\BP\langle m\rangle,\pi_*\BP\langle m\rangle)\Longrightarrow \pi_*\BP\langle m\rangle\otimes_{\BP\langle n\rangle}\BP\langle m\rangle
\]
which collapses at the $\mathrm{E}_2$-page because all differentials on algebra generators land in trivial bidegrees and $\mathbb{Z}_{(p)}[v_1,\cdots ,v_{m}]$ splits off. 
\end{proof}

By the same argument as \cite{AHL10}*{Proposition~5.2}, we compute that 
\[ \pi_*H\mathbb{F}_p\otimes_{\BP}\BP\langle n\rangle=\mathbb{F}_p\langle \overline{\tau}_{n+1},\overline{\tau}_{n+2},\cdots \rangle\]
and using the universal coefficient spectral sequence we deduce that 
\[ \THC^*(\BP\langle n\rangle/\BP ; \mathbb{F}_p)=\mathbb{F}_p[e_{n+1},e_{n+2},\cdots ]\]
where $\varepsilon_i$ corresponds to $\tau_i$. Since these classes are in even degrees, the Bockstein spectral sequences collapse leading to
\[ 
\THC^*(\BP\langle n\rangle/\BP )=\BP\langle n\rangle_*[[e_{n+1},e_{n+2},\cdots]]
\]
The computation that 
\[ 
\THC^*(\BP\langle n\rangle ; \mathbb{F}_p)=\mathbb{F}_p\langle \lambda_1^{\vee},\cdots ,\lambda_{n+1}^{\vee})\otimes \Gamma ((\mu^{p^{n+1}})^{\vee})
\]
is immediate from~\cite{ACH24}*{Proposition~2.9}. By examination of the map of universal coefficient spectral sequences, we can also deduce that the map
\[ \THC^*(\BP\langle n\rangle/\BP)\to \THC^*(\BP\langle n\rangle)\]
sends $e_{n+1}$ to $(\mu^{p^{n+1}})^{\vee}$. and consequently, there exists a lift $e_{n+1}\in \THC^{2p^{n+1}}(\BP\langle n\rangle)$ of $(\mu^{p^{n+1}})^{\vee}$ by the commuting diagram
\[
\begin{tikzcd}
\THC^*(\BP\langle n\rangle/\BP) \arrow{d} \arrow{r}& \THC^*(\BP\langle n\rangle/\BP;\mathbb{F}_p) \arrow{d}\\
\THC^*(\BP\langle n\rangle)\arrow{r} & \THC^*(\BP\langle n\rangle;\mathbb{F}_p) \,.
\end{tikzcd}
\]

\begin{proof}[Proof of Proposition~\ref{prop:brunss}]
Let $F$ denote the fiber of the map 
\[ 
\BP\langle n-1\rangle\otimes_{\BP\langle n\rangle}\BP\langle n-1\rangle \longrightarrow \BP\langle n-1\rangle \otimes_{\BP\langle n-1\rangle}\BP\langle n-1\rangle =\BP\langle n-1\rangle 
\]
of right $\mathrm{BP}\langle n-1\rangle \otimes \mathrm{BP}\langle n-1\rangle^{\op}$-modules and left $\BP\langle n-1\rangle\otimes_{\BP\langle n\rangle}\BP\langle n-1\rangle$-modules. The right $\mathrm{BP}\langle n-1\rangle \otimes \mathrm{BP}\langle n-1\rangle^{\op}$-module structure can also be considered as a $\mathrm{BP}\langle n-1\rangle$-bimodule structure. As a left $\mathrm{BP}\langle n-1\rangle$-module $F\simeq \mathrm{BP}\langle n-1\rangle$. To see this, note that $\mathrm{BP}\langle n-1 \rangle=\mathrm{BP}\langle n\rangle/v_n$ and therefore 
\[ \mathrm{BP}\langle n-1 \rangle\otimes_{\mathrm{BP}\langle n\rangle} \mathrm{BP}\langle n\rangle/v_n=\mathrm{BP}\langle n-1 \rangle/v_n\]
but $v_n$ acts trivially on $\mathrm{BP}\langle n-1 \rangle$ so $\mathrm{BP}\langle n-1 \rangle/v_n\simeq \mathrm{BP}\langle n-1 \rangle\vee \Sigma^{2p^{n-1}}\mathrm{BP}\langle n-1 \rangle$ as a left $\mathrm{BP}\langle n-1 \rangle$-module so the fiber can be identified with $\mathrm{BP}\langle n-1 \rangle$ as a left $\mathrm{BP}\langle n-1 \rangle$-module. A similar argument with right 
$\mathrm{BP}\langle n\rangle$-module structures implies that the fiber is equivalent to $\mathrm{BP}\langle n-1\rangle$ as a right $\mathrm{BP}\langle n-1\rangle$-module and consequently as a $\mathrm{BP}\langle n-1\rangle\otimes \mathrm{BP}\langle n-1\rangle^{\op}$-module. Note that we are not claiming that $\mathrm{BP}\langle n-1\rangle \otimes_{\mathrm{BP}\langle n\rangle}\mathrm{BP}\langle n-1\rangle$ splits as a $\mathrm{BP}\langle n-1\rangle \otimes_{\mathrm{BP}\langle n\rangle}\mathrm{BP}\langle n-1\rangle^{\op}$ bimodule. 

However, we do produce a fiber sequence 
\[ 
\Sigma^{2p^{n}-1}\THH(\BP\langle n-1 \rangle)\to  \THH( \mathrm{BP}\langle n \rangle;\mathrm{BP}\langle n-1 \rangle)\longrightarrow \THH(\BP\langle n-1 \rangle)
\]
of left $\BP\langle n-1\rangle \otimes_{\BP\langle n \rangle }\BP\langle n-1\rangle $-modules 
and the long exact sequence associated to this fiber sequence may be regarded as spectral sequence, which we call the Brun spectral sequence following~\cite{Hon20}. In fact, since the map 
\[ \mathrm{BP}\langle n-1\rangle \to \Sigma^{2p^{n}}\mathrm{BP}\langle n-1\rangle 
\]
is a $\mathrm{BP}\langle n-1\rangle \otimes \mathrm{BP}\langle n-1\rangle^{\op}$-module map, it is an element in topological Hochschild cohomology $\mathrm{THC}(\mathrm{BP}\langle n-1\rangle)$ and therefore it is a $\mathbb{E}_1$-derivation so the associated spectral sequence is multiplicative. The $d_1$-differential is therefore given by capping with this class, which can be identified with a choice of lift $\varepsilon_{n+1}$ of the class $(\mu^{p^{n+1}})^{\vee}$. 
\end{proof}
\begin{remark}
Given the equivalence  
\[ \THH(A;B)\simeq \THH(B;B\otimes_{A}B)\]
one can filter $B\otimes_{A}B$ by its Whitehead filtration $\tau_{\ge \bullet}$ to produce a spectral sequence. Such a spectral sequence was originally considered by Brun~\cite{Bru00} in the setting of FSP's and was extended to the setting of commutative ring spectra by H\"oning~\cite{Hon20}, for applications see also \cite{Cha25} and~\cite{Hys23}. Although we consider a more general filtration of $B\otimes_{A}B$ above, we were inspired by the work of Brun and H\"oning and therefore we still call this a Brun spectral sequence. 
\end{remark}
\begin{example}
Consider the case $n=0$. Then there is a spectral sequence 
\[ 
\THH_{*}(\mathbb{F}_{p})\otimes \Lambda (\sigma v_{0})\implies \THH_{*}(\mathbb{Z}_{(p)};\mathbb{F}_{p}) \,.
\]
This spectral sequence has a differential $d_{1}(\mu)=\sigma v_{0}$ and no further differentials except those generated by the Leibniz rule yielding the known answer $\mathbb{F}_p[\mu^p]\langle \sigma v_0 \mu^{p-1}\rangle$. 
Already, this shows that although $\mathbb{F}_p\otimes_{\mathbb{Z}_{(p)}}\mathbb{F}_p$ splits as an $\mathbb{F}_p$-module it does not split as a $\mathbb{F}_p\otimes \mathbb{F}_p$-module. This is perhaps not surprising because $\pi_*\mathbb{F}_p\otimes_{\mathbb{Z}_{(p)}}\mathbb{F}_p=\Lambda (\sigma v_0)$ and the $k$-invariant $\mathbb{F}_p\to \Sigma^{2}\mathbb{F}_p$ is the non-trivial class in degree $2$ in topological Hochschild cohomology of $\mathbb{F}_p$ which is dual to $\mu$ (up to a unit).
\end{example}

\begin{example}
Consider the case $n=1$. Then there is a spectral sequence 
\[ 
\THH_{*}(\mathbb{Z}_{(p)})\otimes \Lambda (\sigma v_{1})\implies \THH_{*}(\ell;\mathbb{Z}_{p}) \,.
\]
We know $\sigma v_1$ survives because it generates a torsion free class and $\THH_*(\mathbb{Z})$ is torsion above degree zero. Specifically, 
\[
\THH_*(\mathbb{Z}_{(p)})=\begin{cases} \mathbb{Z}_{(p)} & \text{ if } n=0 \\ 
\mathbb{Z}/p^{\nu_p(k)+1}\{\lambda_1\mu_1^{k-1}\} & \text{ if } n=2pk-1>0 \\ 
0 & \text{ otherwise.}
\end{cases}
\]
By comparing to~\cite{AHL10} for example, we can determine differentials 
\[
d_1(\lambda_1\mu^{k})\dot{=}p^{\nu_p(k)}\lambda_1\mu^{k-1}\sigma v_1 
\]
for all integers $k$. 
In particular, when $k=ip-1$ for some integer $i\ge 0$ then 
\[ 
\ker
(d_1 :\mathbb{Z}/p^{\nu_p(i)+2}\{\lambda_1\mu_1^{ip-1}\}\to \mathbb{Z}/p\{\lambda_1\mu_1^{ip-2}\sigma v_1) = \mathbb{Z}/p^{\nu_p(i)+1}\{\lambda_1\mu^{ip-1}\} \,.
\]
and when $k=pi$ for some integer $i\ge 0$ then 
\[ 
\textup{coker} 
(d_1 :\mathbb{Z}/p\{\lambda_1\mu_1^{ip}\}\to \mathbb{Z}/p^{\nu_p(i)+2}\{\lambda_1 \mu_1^{ip-1}\sigma v_1 \}) = \mathbb{Z}/p^{\nu_p(i)+1}\{\sigma v_1\lambda_1\mu_1^{ip-1}\}
\]

This accounts for the classes $a_i=p\lambda_1\mu_1^{ip-1}$ and $b_i=\sigma v_1\lambda_1\mu_1^{ip-1}$ respectively. Otherwise the kernel and cokernel are trivial. In this case, the $k$-invariant is a map 
\[ H\mathbb{Z}\to \Sigma^{2p} H\mathbb{Z}\]
of $ H\mathbb{Z}\otimes H\mathbb{Z}^{\op}$-modules, corresponding to a generator of $\mathrm{THC}^{2p}(\bZ)=\mathbb{Z}/p$. Here we use the equivalence 
\[ 
\map_{H\bZ\otimes H\bZ^{\op}}(H\bZ,H\bZ)\simeq \map_{H\bZ}(\THH(\bZ),H\bZ) \,,
\]
the universal coefficient spectral sequence 
\[ 
\Ext_{\mathbb{Z}}^{*,*}(\THH_*(\mathbb{Z}),\mathbb{Z})\implies \THC^*(\mathbb{Z})
\]
associated to the equivalence, and the known computation 
\[
    \THH_n(\mathbb{Z})=\begin{cases} \mathbb{Z} &n=0 \\ 
    \mathbb{Z}/(k-1) &n=2k-1 \\ 
    0 & \text{otherwise}
    \end{cases}
\]
to produce the identity $\mathrm{THC}^{2p}(\mathbb{Z})=\mathbb{Z}/p$
\end{example}

%% file: spectseqcomputation.tex
\section{Hochschild homology  \texorpdfstring{$\BP\langle 2\rangle$}{BP2} with \texorpdfstring{$\BP\langle 1\rangle$}{BP1} coefficients}\label{THH}
This section is devoted to our computation of topological Hochschild homology of $\BP\langle 2\rangle$ with coefficients in $\BP\langle 1\rangle$. In Section~\ref{sec:SS}, we compute the spectral sequence \eqref{BrunSS} in the case $n=2$. In Section~\ref{sec:extensions}, we resolve extensions in this spectral sequence. 

Since our proof builds on Theorem~\ref{thm:BP2-coeff}, we include the following running assumption throughout the rest of the paper. 
\begin{runningassumption}\label{runningassumption}
For the remainder of this paper, we assume $p=2$ or the error term of ~\cite[Equation (3-7)]{ACH24} vanishes.
\end{runningassumption}
\begin{remark}
We expect that Running Assumption~\ref{runningassumption} is not strictly necessary and our results hold in full generality. Note that  Running Assumption~\ref{runningassumption} holds for $\mathbb{E}_\infty$-rings so it holds at $p=3$ when $\BP\langle 2\rangle=\tmf^{D}$ from~\cite{HL10}. 
\end{remark}

\subsection{Computating the spectral sequence}\label{sec:SS}

We first use the known computation of $\THH_*(\BP\langle 2\rangle;H\mathbb{Z}_{(p)})$ to deduce differentials that will be useful for bootstrapping to the case of $\THH_*(\BP\langle 2\rangle;\BP\langle 1\rangle)$. 

\begin{proposition}
  There is a spectral sequence
  \[
    \THH_*(\BP\langle 1\rangle; \Z_{(p)}) \langle \sigma v_2 \rangle \Rightarrow \THH_*(\BP \langle 2 \rangle;\Z_{(p)}) \,.
  \]
The differentials in this spectral sequence are given for any $i \geq 1$ by
  \[
    \begin{gathered}
      d_1(a_i) \dot{=} p^{\nu_p(i-1)} a_{i-1} \sigma v_2 \,, \\
      d_1(b_i) \dot{=} p^{\nu_p(i-1)} b_{i-1} \sigma v_2 \,.
    \end{gathered}
  \]
\end{proposition}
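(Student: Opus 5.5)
We obtain the spectral sequence by the same construction as in the proof of Proposition~\ref{prop:brunss}, applied after base change to $\Z_{(p)}$. The fiber sequence of $\BP\langle 1\rangle\otimes\BP\langle 1\rangle^{\op}$-modules
\[
\Sigma^{2p^2-1}\BP\langle 1\rangle\to \BP\langle 1\rangle\otimes_{\BP\langle 2\rangle}\BP\langle 1\rangle\to \BP\langle 1\rangle
\]
is tensored over $\BP\langle 1\rangle\otimes\BP\langle 1\rangle^{\op}$ with $\Z_{(p)}$. Using
\[
\THH(\BP\langle 2\rangle;\Z_{(p)})\simeq \THH(\BP\langle 1\rangle;\Z_{(p)}\otimes_{\BP\langle 1\rangle}(\BP\langle 1\rangle\otimes_{\BP\langle 2\rangle}\BP\langle 1\rangle)\otimes_{\BP\langle 1\rangle}\Z_{(p)})
\]
(or simply pushing forward the coefficients), we get a fiber sequence
\[
\Sigma^{2p^2-1}\THH(\BP\langle 1\rangle;\Z_{(p)})\to\THH(\BP\langle 2\rangle;\Z_{(p)})\to\THH(\BP\langle 1\rangle;\Z_{(p)}).
\]
Its long exact sequence is the two-column spectral sequence in the statement. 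Its only differential is $d_1$, which is the connecting map and is given by capping with the class $\varepsilon_{3}\in\THC^{2p^3}(\BP\langle 1\rangle)$ lifting $(\mu^{p^2})^\vee$. This $d_1$ is $\sigma v_2$-linear and, as in Proposition~\ref{prop:brunss}, satisfies the Leibniz rule. Since $\lambda_1$ comes from $\THH_*(\BP\langle 2\rangle;\Z_{(p)})$ by the Corollary above, $d_1(\lambda_1)=0$, and hence $d_1$ is $\lambda_1$-linear.

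To determine $d_1$ on $a_i$ and $b_i$, we first reduce mod $p$, i.e.\ compare with the analogous spectral sequence
\[
\THH_*(\BP\langle 1\rangle;\F_p)\langle\sigma v_2\rangle\Rightarrow\THH_*(\BP\langle 2\rangle;\F_p).
\]
Here the cap product with $(\mu^{p^2})^\vee$ acts on $\F_p[\mu_2]\langle\lambda_1,\lambda_2\rangle$ as a divided-power dual. It therefore sends $\mu_2^{k}\mapsto k\,\mu_2^{k-1}$ up to unit; more precisely, in the $\mu_2^{p^2}$-generator language it sends $\mu_2^{p^2 j}$ to $j\mu_2^{p^2(j-1)}$. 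This is consistent with the abutment $\F_p[\mu_3]\langle\lambda_1,\lambda_2,\lambda_3\rangle$: $\mu_2^{p^2}$ survives as $\mu_3$ (since $\mu_3\mapsto\mu_2^p$), and the classes with exponent prime to $p$ pair off with $\lambda_3\leftrightarrow\sigma v_2\mu_2^{p-1}$.

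Integrally, we then use $\THH_*(\BP\langle 1\rangle;\Z_{(p)})$ from \cite{AHL10}, where $a_i=\mu_2^{i-1}\lambda_2$ and $b_i=\mu_2^{i-1}\lambda_1\lambda_2$ generate cyclic groups of order $p^{\nu_p(i)+1}$. Degree and linearity considerations force
\[
d_1(a_i)=c_i\,a_{i-1}\sigma v_2,\qquad d_1(b_i)=c_i'\,b_{i-1}\sigma v_2
\]
for some $c_i,c_i'\in\Z_{(p)}$, since in degree $|a_i|-1-(2p^2-1)=|a_{i-1}|-1+\dots$ the target is generated by $a_{i-1}\sigma v_2$, and likewise for $b$. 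The task is then to pin down the $p$-adic valuation of $c_i$.

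The main obstacle is this valuation, and I would fix it by counting against the known answer, Theorem~\ref{thm:BP2-coeff}. The abutment, rationally, is $\Z_{(p)}\langle\lambda_1,\sigma v_2\rangle$ by Proposition~\ref{prop:rational}, and its torsion in each degree is given explicitly by $T_0^2$. For each $i$, the $d_1$ between $\Z/p^{\nu_p(i)+1}\{a_i\}$ and $\Z/p^{\nu_p(i-1)+1}\{a_{i-1}\sigma v_2\}$ must produce kernels and cokernels whose orders match the orders in $\F_p\langle\lambda_1,\lambda_2\rangle\otimes T_0^2$ in the corresponding degrees. This matching is done just as in the $n=1$ example after Proposition~\ref{prop:brunss}. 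When $p\nmid i-1$ the target is $\Z/p$, and the mod $p$ computation already gives $c_i$ a unit. When $i-1=p^s m$, the group orders $p^{s+1}$ and $p^{\nu_p(i)+1}=p$ together with the required torsion order $p^{s}$ of $\lambda_{s+2}\mu_3^{\dots}$-type classes force $c_i\doteq p^{s}=p^{\nu_p(i-1)}$. The case of $b_i$ follows identically, or from $\lambda_1$-linearity applied to $a_i$ where it applies.
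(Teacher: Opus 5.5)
Your proposal is correct and is essentially the paper's argument. The paper likewise gets the $\Z_{(p)}$-coefficient fiber sequence by coning off $v_1$, which is your \emph{push forward the coefficients} option; the two-sided formula $\Z_{(p)}\otimes_{\BP\langle 1\rangle}(\cdots)\otimes_{\BP\langle 1\rangle}\Z_{(p)}$ you wrote first is not right. The paper then determines $d_1$ purely by comparing orders with $T_0^2$ from Theorem~\ref{thm:BP2-coeff}, phrased as the image of $T_0^2\to T_0^1$ being $p\,T_0^1$. Your mod $p$ step is harmless but unnecessary: the abutment in degree $|a_i|$ has order exactly $p^{\nu_p(i)}$, and the $\sigma v_2$-column contributes nothing there. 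That already forces $d_1(a_i)$ to generate the order-$p$ subgroup $\Z/p\{p^{\nu_p(i-1)}a_{i-1}\sigma v_2\}$ for every $i\ge 2$, and the same count with $\lambda_1 T_0^2$ handles $b_i$.
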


\begin{proof}
  The fiber sequence 
  \[ 
    \Sigma^{2p^2-1}\THH(\BP\langle 1\rangle)\to \THH(\BP\langle 2\rangle ;\BP\langle 1\rangle)\to \THH(\BP\langle 1\rangle)
  \]
  is of $\BP\langle 1\rangle \otimes_{\BP \langle 2 \rangle} \BP\langle 1\rangle^\op$-modules, so is compatible with the left $v_1$-multiplication map. This produce a fiber sequence
  \[
    \Sigma^{2p^2-1}\THH(\BP\langle 1\rangle; \Z_{(p)})\to \THH(\BP\langle 2\rangle ; \Z_{(p)})\to \THH(\BP\langle 1\rangle; \Z_{(p)})
  \]
  and a spectral sequence
  \[
    \THH_*(\BP\langle 1\rangle; \Z_{(p)}) \langle \sigma v_2 \rangle \Rightarrow \THH_*(\BP \langle 2 \rangle; \Z_{(p)}) \,.
  \]

  By \cite{ACH24}*{Theorem~3.18}, we know that 
  \[
    \THH_*(\BP\langle 1\rangle,\mathbb{Z}_{(p)})=\mathbb{Z}_{(p)}\langle \lambda_1\rangle \otimes (\mathbb{Z}_{(p)}\oplus T_0^1)
  \]
  and 
  \[
    \THH_*(\BP\langle 2\rangle,\mathbb{Z}_{(p)})=\mathbb{Z}_{(p)}\langle \lambda_1,\lambda_2\rangle \otimes (\mathbb{Z}_{(p)}\oplus T_0^2)\,.
  \]
  So we know that there is long exact sequence 
  \[ 
    \to T_0^{2} \to T_0^1\overset{\partial}{\to} \Sigma^{2p^2}T_0^1 \to 
  \]
  where the connecting map $\partial$ correspond to the differential of the spectral sequence.
  Since 
  \[ 
    T_0^1=\bigoplus_{s\ge 1}\mathbb{Z}/p^s[(\mu^{p^2})^{p^s}]\{\lambda_{1+s}(\mu^{p^2})^{jp^{s-1}}: 0\le j\le p-2\}
  \]
  and 
  \[ 
    T_0^2=\bigoplus_{s\ge 1}\mathbb{Z}/p^s[(\mu^{p^3})^{p^s}]\{\lambda_{2+s}(\mu^{p^3})^{jp^{s-1}}: 0\le j\le p-2\}
  \]
  where 
  \[ 
    \lambda_{s}=\begin{cases} \lambda_s & \text{ if } 1\le s\le n+1  \\ 
      \lambda_{s-1}(\mu^{p^{n+1}})^{p^{s-(n+2)}(p-1)} & \text{ if } s>n+1 \,.
    \end{cases}
  \]
  
  The map $T_0^2 \rightarrow T_0^1$ has image $p \cdot T_0^1$, so that each generator in degree $*$ of $T_0^1$ must have image by $\partial$ the top of the multiplication-by-$p$-tower of the generator in degree $*-2p^2$ in $T_0^1$. This is exactly the formulas we claimed for the torsion. The differentials in the non-torsion part are zero for degree reasons.
\end{proof}

\begin{lemma} \label{lem:thhell1}
  In $\THH_*(\BP\langle 1\rangle)$, if $|\alpha| = |b_i|$ for some $i \geq 1$, and $\alpha \neq 0$, then $v_1^{p-1} \alpha \neq 0$; if moreover $v_1^{p+1} \alpha = 0$ then $v_1^p \alpha = 0$ and $\alpha \in \Z_{(p)} \{ v_0^{\nu_p(i)}b_i \}$. If $\alpha$ is a multiple of $v_1$, then $\alpha \in p\Z_{(p)} \{ v_0^{\nu_p(i)}b_i \}$ 
\end{lemma}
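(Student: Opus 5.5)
The proof will be a direct degree-by-degree analysis of the presentation of $\THH_*(\BP\langle 1\rangle)$ in Theorem~\ref{thm:AHL}. Fix $i\ge 1$ and write $i=\alpha p^n$ with $p\nmid\alpha$, so $n=\nu_p(i)$ and $|b_i|=2p^2 i+2p-2$.

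\textbf{Step 1: list the module in degree $|b_i|$.} I will enumerate all generators of the presentation whose $v_1$-multiples can land in this degree. The torsion-free generators $1$, $\lambda_1$ and $v_0^ma_{p^m}$ have degrees $0$, $2p-1$ and $2p^{m+2}-1$. Since $|v_1|=2p-2$ is even, every $v_1$-multiple of $\lambda_1$ or of $v_0^m a_{p^m}$ sits in odd degree. These classes therefore never reach the even degree $|b_i|$. The only possible contribution of $1$ is $v_1^k\cdot 1$ with $k(2p-2)=|b_i|$, which is torsion free. A torsion element $v_1^k v_0^h b_{\alpha'p^{n'}}$ has degree $|b_i|$ exactly when $2p^2(i-\alpha'p^{n'})=k(2p-2)$, that is, when $p^2(i-i')=k(p-1)$ with $i'=\alpha'p^{n'}$. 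The $v_1$-torsion relation bounds $k$ by $p^{n'-h+1}+\dots+p<p^{n'+2}/(p-1)$. Comparing this bound with the required $k=p^2(i-i')/(p-1)$ will show that only $i'=i$, $k=0$ survives; any other $i'<i$ would need $k\ge p^2/(p-1)$, which exceeds the allowed nilpotence range unless $n'$ is large. Making this comparison airtight, including the borderline $h=n'$ cases and the $p\cdot b$ relation that mixes $b_{(\beta p+p-1)p^{n'}}$ with $v_1^{p^{n'+2}}b_{\beta p^{n'+1}}$, is the step I expect to be the main obstacle.

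\textbf{Step 2: describe the torsion in this degree.} Given Step 1, the torsion in degree $|b_i|$ is the cyclic group generated by $b_i$, of order $p^{n+1}$. The relations $p\cdot v_0^hb_i=v_0^{h+1}b_i$ make $v_0^h b_i$ equal to $p^h b_i$, except in the exceptional case of the third relation. In that case $b_i$ absorbs a $v_1^{p^{n+2}}$-multiple of another $b$, which I will track by hand. There may also be a torsion-free summand generated by $v_1^k$.

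\textbf{Step 3: read off the three claims.} I will write a nonzero $\alpha$ in degree $|b_i|$ as $c\,p^h b_i$ plus possibly a torsion-free part.
\begin{itemize}
\item If $\alpha$ has a torsion-free component, every $v_1$-power of it is nonzero.
\item Otherwise, the relation $v_1^{p^{n-h+1}+\dots+p}v_0^hb_i=0$ shows that $v_1$-nilpotence of $p^hb_i$ is exactly $p+p^2+\dots+p^{n-h+1}\ge p$ for $h\le n$. Hence $v_1^{p-1}\alpha\neq0$.
\item The exponent equals $p$, so $v_1^{p+1}\alpha=0$ forces it to be $p$ and gives $v_1^p\alpha=0$, precisely when $h=n$, that is, $\alpha\in\Z_{(p)}\{v_0^{\nu_p(i)}b_i\}$.
\end{itemize}

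\textbf{Step 4: the $v_1$-divisibility claim.} Suppose $\alpha=v_1\beta$. Then $\beta$ lies in degree $|b_i|-(2p-2)$, which contains no $b$-generator by the same degree count as in Step 1. So $\alpha$ is a $v_1$-multiple coming only through the relation $p\cdot b_{(\beta p+p-1)p^n}=v_0b+v_1^{p^{n+2}}v_0^{\nu_p(\beta)}b_{\beta p^{n+1}}$. In that relation the $v_1$-power appears only multiplied into the $p$-divisible part, which places $\alpha$ in $p\Z_{(p)}\{v_0^{\nu_p(i)}b_i\}$. If $\alpha$ involves the torsion-free $v_1^k$, I will rule that out or absorb it in the same way, checking against Step 1.
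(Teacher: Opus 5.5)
\textbf{There is a genuine gap, and it sits in Step~1.} The conclusion you expect there is false, and Steps~2--4 are built on it.

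A monomial $v_1^kv_0^hb_j$ with $j<i$ lies in degree $|b_i|$ exactly when $k=p^2k'$ with $k'=(i-j)/(p-1)$. It is nonzero whenever $p^2k'<p+p^2+\dots+p^{\nu_p(j)-h+1}$. Already $k'=1$ is allowed as soon as $h<\nu_p(j)$, so ``$n'$ large'' is the generic case, not a borderline one.

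\emph{Step~2 fails.} The smallest example is $v_1^{p^2}b_p\neq 0$ in degree $|b_{2p-1}|$. The exceptional relation gives $p\cdot b_{2p-1}=v_1^{p^2}b_p$, so $b_{2p-1}$ has order $p^2$, not $p^{\nu_p(2p-1)+1}=p$ as Step~2 asserts. The orders $p^{\nu_p(i)+1}$ hold in $\THH_*(\BP\langle 1\rangle;\Z_{(p)})$, not in $\THH_*(\BP\langle 1\rangle)$. Worse, the extra classes need not be absorbed into $\Z_{(p)}\{b_i\}$ at all. For $i=p^3+p-1$, the class $x=v_1^{p^2}b_{p^3}$ has order $p^3$, and $y=b_i$ satisfies $py=v_1^{p^2}v_0^2b_{p^3}=p^2x$. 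So the torsion in degree $|b_i|$ is $\Z/p^3\oplus\Z/p$, not cyclic on $b_i$.

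\emph{Step~4 fails too.} Degree $|b_i|-(2p-2)$ does contain $b$-monomials, here $v_1^{p^2-1}b_{p^3}$. In fact $x$ is a torsion $v_1$-multiple in degree $|b_i|$ that does not lie in $p\Z_{(p)}\{b_i\}$, which has order $p$. There is also the torsion-free class $v_1^k\cdot 1$, which sits in degree $|b_i|$ whenever $(p-1)\mid i$ (always at $p=2$) and cannot be ruled out. So the last sentence of the lemma is only true for torsion $\alpha$ under the preceding hypothesis $v_1^{p+1}\alpha=0$. That is exactly how it is used in Proposition~\ref{theorem:differentialellcoeff}. In that reading it follows from the second claim, because $v_0^{\nu_p(i)}b_i$ maps nontrivially to $\THH_*(\BP\langle 1\rangle;\Z_{(p)})$ and hence is not a $v_1$-multiple.

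\textbf{What a correct proof needs.} The real content is controlling all monomials $v_1^{p^2k'}v_0^hb_j$ with $j\le i$ in degree $|b_i|$.
\begin{itemize}
\item For the first claim, the key point is a congruence you do not use. The nilpotence bound $N=p+\dots+p^{\nu_p(j)-h+1}$ satisfies $N\equiv p\pmod{p^2}$, while $k\equiv 0\pmod{p^2}$. So $k<N$ forces $k+p-1<N$, and every such monomial survives multiplication by $v_1^{p-1}$. This is the paper's argument.
\item For the second claim, you would have to show that every class in degree $|b_i|$ with $v_1$-exponent at most $p+1$ lies in $\Z_{(p)}\{v_0^{\nu_p(i)}b_i\}$. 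In your framework this means following chains of the exceptional relation, for example $v_1^{p^2+p^3}b_{p^2}=p^2b_{2p^2-1}$ via $b_{(2p-1)p}$.
\end{itemize}

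The paper avoids this bookkeeping with the $v_1$-Bockstein spectral sequence. Let $m$ be the first exponent with $v_1^m\alpha=0$. Then $v_1^m\alpha$ must be hit by a differential supported on some $a_j$, so $|v_1^m\alpha|=|a_j|-1=|b_j|-2p$. Since $|v_1^{p+1}\alpha|=|b_{i+1}|-2$, this excludes $m=p+1$. For $m=p$ it forces $j=i+1$. Hence $\alpha$ and $v_0^{\nu_p(i)}b_i$ are truncated by the same class $a_{i+1}$ and lie in a single extension tower.
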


\begin{proof}
  Since $\THH_{|b_i|}(\BP\langle 1\rangle)$ for some $i\ge 1$ is generated as $\Z_{(p)}$-module by the element $v_1^k v_0^h b_j$ for some $h,\, k \geq 0$ and $j \geq 1$, it is sufficient to consider the case where $\alpha = v_1^k v_0^h b_j$. We consider the degrees:
  \[
    \begin{gathered}
      |\alpha| = |b_i| \\
      \iff 2j p^2 - 1 + 2p - 1 + 2k(p-1) = 2i p^2 - 1 + 2p - 1 \\
      \iff (i - j)p^2 = k(p -1)
    \end{gathered}
  \]
  so that $p^2$ divides $k$; write $k = p^2 k'$. However, the order for multiplication by $v_1$ of $v_0^h b_j$ is given by the relation
  \[
    v_1^{p^{\nu_p(j) - h + 1}+ \dots + p} v_0^h b_j = 0
  \]
  by Theorem~\ref{thm:AHL} so that since $\alpha \neq 0$,
  \[
    k = p^2 k' \leq p^{\nu_p(j) - h + 1}+ \dots + p
  \]
  and thus
  \[
    k + p - 1 \leq p^{\nu_p(j) - h + 1}+ \dots + p
  \]
  which means that $v_1^{p - 1} \alpha \neq 0$.

  If moreover $v_1^{p+1} \alpha = 0$, we have 
  \[
    |v_1^{p+1} \alpha| = |v_1^{p+1}| + |b_i| = |b_{i+1}| - 2
  \]
  but the $v_1$-Bocksteins are supported by the classes $a_j$ of $\THH_*(\BP\langle 1\rangle; \Z_{(p)})$ by \cite[Theorem~6.4]{AHL10}, whose degrees are of the form
  \[
    |a_j| = |b_j| - 2p + 1
  \]
  and thus it must be the case that $v_1^p \alpha = 0$.
  
  Finally, this means that the $v_1$-Bocksteins that kill $v_1^p \alpha$ and $v_1^p v_0^{\nu_p(i)} b_i$ are supported by some multiple of $p$ of the same class $a_{i + 1}$, so that $v_0^{\nu_p(i)} b_i$ is at the bottom of a tower of extensions that includes $\alpha$, and we conclude that  $\alpha \in \Z_{(p)}\{ v_0^{\nu_p(i)} b_i \}$. The last statement follows from the relations in Theorem~\ref{thm:AHL}. 
\end{proof}

\begin{lemma} \label{lem:thhell2}
  In $\THH_*(\BP\langle 1\rangle)$, if
  \[
    |v_0^hb_j| \leq | b_i| < | v_1^{p^{\nu_p(j) - h + 1} + \dots + p} v_0^hb_j |
  \]
  for some $ 0 \leq j \leq i$ and $0 \leq h \leq \nu_p(j)$ then
  \[
    | v_1^{p^{\nu_p(i) + 1} + \dots + p} b_i | \leq | v_1^{p^{\nu_p(j) - h + 1} + \dots + p} v_0^hb_j | \,.
  \]
\end{lemma}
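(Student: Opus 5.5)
The statement is a pure degree inequality, so the plan is to translate every term into degrees using Theorem~\ref{thm:AHL} and then do elementary $p$-adic arithmetic. By Theorem~\ref{thm:AHL}, $|v_0^hb_m| = 2p^2m + 2p - 2$ for every index $m\ge 1$, independent of $h$, and $|v_1| = 2p-2$. Write $m = \nu_p(j) - h$, so that $0 \le m \le \nu_p(j)$, and $S(r) = p + p^2 + \dots + p^{r+1}$. The key identity is
\[
  (p-1)\,S(r) = p^{r+2} - p .
\]

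First I would rewrite the hypothesis. After dividing by $2$ and using the identity, it becomes
\[
  0 \le p^2(i-j) < p^{m+2} - p .
\]
Hence $d := i - j$ satisfies $0 \le d < p^m$.

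The desired conclusion rewrites in the same way as
\[
  p^2 i + p^{\nu_p(i)+2} - p \le p^2 j + p^{m+2} - p .
\]
This is equivalent to
\[
  d + p^{\nu_p(i)} \le p^m .
\]

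\emph{The case $d \ge 1$.} Here $1 \le d < p^m \le p^{\nu_p(j)}$, so $\nu_p(d) < m \le \nu_p(j)$. Therefore $\nu_p(i) = \nu_p(j + d) = \nu_p(d)$. Now $p^{\nu_p(d)}$ divides both $d$ and $p^m$, and $d < p^m$. It follows that $d \le p^m - p^{\nu_p(d)}$, which is exactly $d + p^{\nu_p(i)} \le p^m$. This settles the main case.

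\emph{The case $d = 0$, i.e.\ $i = j$.} The needed inequality becomes $p^{\nu_p(j)} \le p^{\nu_p(j)-h}$, which holds precisely when $h = 0$. This edge case is the main obstacle. As literally stated, the lemma appears to fail for $i = j$ with $0 < h \le \nu_p(j)$: the hypothesis then holds, but the conclusion does not. I would resolve it by restricting to $h = 0$ when $i = j$, where the conclusion is an equality. This matches the intended use, in which $v_0^hb_j$ with $h>0$ and $b_i$ live in the same degree only when they are related by the $p$-multiplication tower of Theorem~\ref{thm:AHL}. Alternatively one can require $j < i$, and I would check which reading the later application needs.
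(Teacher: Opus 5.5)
Your proposal is correct and takes essentially the paper's route: both reduce the hypothesis to $j \le i < j + p^{\nu_p(j)-h}$ and the conclusion to $i + p^{\nu_p(i)} \le j + p^{\nu_p(j)-h}$, and your argument via $\nu_p(i)=\nu_p(i-j) < \nu_p(j)-h$ and $i-j \le p^{\nu_p(j)-h} - p^{\nu_p(i-j)}$ is a precise version of the paper's appeal to base-$p$ expansions. Your edge case is genuine: the statement fails for $i=j$, $h>0$ (e.g.\ $p=2$, $i=j=2$, $h=1$ gives $18 \le 18 < 22$ but $30 > 22$), and the paper's step ``$\nu_p(i) < \nu_p(j)-h$'' silently assumes $i \neq j$; the lemma's only use, in the proof of Proposition~\ref{theorem:differentialellcoeff}, has $|v_0^hb_j| < |b_i|$ strictly, so the reading $j<i$ is the one needed.
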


\begin{proof}
  We have
  \[
    \begin{aligned}
      | v_1^{p^{\nu_p(j) - h + 1} + \dots + p} v_0^hb_j |
      & = |b_j| + 2(p^{\nu_p(j) - h + 2} - p) \\
      & = |b_{j + p^{\nu_p(j) - h}} | - 2p
    \end{aligned}
  \]
  so that the hypothesis implies $j \leq i < j + p^{\nu_p(j) - h}$, and thus $\nu_p(i) < \nu_p(j) - h$. Considering the base-$p$ expansion of $i$ and $j$, we see that $ i + p^{\nu_p(i)} \leq j + p^{\nu_p(j) - h}$ which implies the result.
\end{proof}

\begin{proposition}\label{theorem:differentialellcoeff}
The differentials in the spectral sequence
\[ 
\THH_*(\BP\langle 1\rangle)\langle \sigma v_2\rangle \implies \THH_*(\BP\langle 2\rangle,\BP\langle 1\rangle)
\]
are given by 
\[
  d_1(b_i)\dot{=} v_0^{\nu_p(i-1)}b_{i-1} \sigma v_2
\]
for each $i \geq 2$ and they are $v_1$-linear.
\end{proposition}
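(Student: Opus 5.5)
We will compare the spectral sequence of Proposition~\ref{prop:brunss} for $n=2$ with the $\Z_{(p)}$-coefficient version from the preceding proposition, via the reduction $\BP\langle 1\rangle\to \Z_{(p)}$. Because the fiber sequence
\[
\Sigma^{2p^2-1}\THH(\BP\langle 1\rangle)\to \THH(\BP\langle 2\rangle;\BP\langle 1\rangle)\to \THH(\BP\langle 1\rangle)
\]
consists of left $\BP\langle 1\rangle\otimes_{\BP\langle 2\rangle}\BP\langle 1\rangle$-modules, the connecting map $d_1$ is $v_1$-linear. This gives the second claim of the statement, and it also means that $d_1$ commutes with the cofiber sequence for multiplication by $v_1$. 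Hence $d_1$ is determined by its value on $\Z_{(p)}[v_1]$-module generators of $\THH_*(\BP\langle 1\rangle)$ (Theorem~\ref{thm:AHL}). On the torsion-free generators $1$, $\lambda_1$ and $v_0^na_{p^n}$ the differential vanishes. The reason is that its target lies in a degree of $\Sigma^{2p^2-1}\THH(\BP\langle 1\rangle)$ whose elements are forced to be torsion or zero, while rationally the spectral sequence must collapse by Proposition~\ref{prop:rational}. If needed, we handle this by noting that a differential from a torsion-free class into torsion must be compatible with the relations $p\cdot a_1=v_1^p\lambda_1$, and that the source $v_1^p\lambda_1$ has $d_1=0$ by degree. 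So the real content is $d_1$ on the $b_i$ and their $v_0^h$-multiples, and by $\sigma v_2$-linearity and $(p,v_1)$-linearity it suffices to treat $b_i$.

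For $b_i$, we use naturality along $\THH(\BP\langle 2\rangle;\BP\langle 1\rangle)\to\THH(\BP\langle 2\rangle;\Z_{(p)})$. The map on $E_1$-pages is the reduction $\THH_*(\BP\langle 1\rangle)\to \THH_*(\BP\langle 1\rangle;\Z_{(p)})$, which sends $b_i\mapsto b_i$. By the previous proposition, $d_1(b_i)$ maps to a unit times $p^{\nu_p(i-1)}b_{i-1}\sigma v_2$. The element $\alpha$ with $d_1(b_i)=\alpha\,\sigma v_2$ lies in degree $|b_{i-1}|$, so Lemmas~\ref{lem:thhell1} and~\ref{lem:thhell2} apply to it. First, $v_1$-linearity together with the relation $v_1^{p^{\nu_p(i)+1}+\dots+p}b_i=0$ forces $v_1^{p^{\nu_p(i)+1}+\dots+p}\alpha=0$. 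Lemma~\ref{lem:thhell2} then shows that $\alpha$ cannot have a component on a $v_1$-multiple of a longer $v_1$-tower starting below; more precisely, any tower $v_0^hb_j$ containing $\alpha$ must die no later than $b_i$'s tower. Combined with Lemma~\ref{lem:thhell1} (for instance after using $v_1^{p+1}$-annihilation when $\nu_p(i)=0$, and in general the tower-length comparison), this yields $\alpha\in\Z_{(p)}\{v_0^{\nu_p(i-1)}b_{i-1}\}$. Finally we compare with the image in $\Z_{(p)}$-coefficients. There $v_0^{\nu_p(i-1)}b_{i-1}$ reduces to $p^{\nu_p(i-1)}b_{i-1}$ by the relations in Theorem~\ref{thm:AHL}, and this element has order exactly $p$ in $\THH_*(\BP\langle 1\rangle;\Z_{(p)})$. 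So the coefficient of $\alpha$ is a unit modulo $p$, and hence $\alpha\doteq v_0^{\nu_p(i-1)}b_{i-1}$.

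The main obstacle is the middle step: excluding contributions to $\alpha$ from other summands in degree $|b_{i-1}|$, namely $v_1^k v_0^hb_j$ with $j<i-1$. These are invisible after reduction to $\Z_{(p)}$ exactly when they are $v_1$-multiples. For this we rely on the tower-length comparison: $v_1$-linearity forces $\alpha$ to be killed by the $v_1$-power killing $b_i$, and Lemma~\ref{lem:thhell2} shows that any competing tower that is nonzero in degree $|b_{i-1}|$ outlives that bound unless it is the $b_{i-1}$ tower itself. The residual $v_1$-multiple ambiguity lies in $p\Z_{(p)}\{v_0^{\nu_p(i-1)}b_{i-1}\}$ by the last clause of Lemma~\ref{lem:thhell1}, so it only changes the unit.
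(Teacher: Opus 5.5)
Your proposal follows essentially the paper's route. It uses rational collapse for the torsion-free classes and comparison with the $\Z_{(p)}$-coefficient spectral sequence. It then uses $v_1$-linearity to force the coefficient of $\sigma v_2$ in $d_1(b_i)$ to be killed by the $v_1$-power killing $b_i$. Next, Lemma~\ref{lem:thhell2} excludes towers reaching degree $|b_i|$, which is what yields $v_1^{p+1}$-annihilation, and Lemma~\ref{lem:thhell1} pins the coefficient down. Analysing the whole coefficient and reading off the unit from the order-$p$ class $p^{\nu_p(i-1)}b_{i-1}$ is only a harmless reordering of the paper's split into $v_0^{\nu_p(i-1)}b_{i-1}$ plus a $v_1$-multiple.

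One correction is needed in the torsion-free step. The groups receiving $d_1$ from $\lambda_1$ and $v_0^na_{p^n}$ are torsion-free, not torsion. These classes sit in odd degrees, $d_1$ lowers the $\THH_*(\BP\langle 1\rangle)$-degree by the even number $2p^2$, and all torsion of $\THH_*(\BP\langle 1\rangle)$ lies in even degrees. So rational collapse alone kills these differentials, whereas your fallback via $p\cdot a_1=v_1^p\lambda_1$ would at best show they are $p$-torsion.
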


\begin{proof}
  First, we consider the rationalization of the fiber sequence 
  \[ 
    \Sigma^{2p^2-1}\THH(\BP\langle 1\rangle)\to \THH(\BP\langle 2\rangle ;\BP\langle 1\rangle)\to \THH(\BP\langle 1\rangle) \,.
  \]
  This produces a spectral sequence 
  \[ 
    \THH_*(\BP\langle 1\rangle)\otimes \mathbb{Q}\langle \sigma v_2\rangle \implies \THH_*(BP\langle 2\rangle,\BP\langle 1\rangle)\otimes \mathbb{Q}
  \]
  which we know collapses by Proposition~\ref{prop:rational}. The map of spectral sequences 
  \[ 
    \begin{tikzcd}
      \THH_*(\BP\langle 1\rangle)\langle \sigma v_2\rangle \ar[d] \arrow[=>]{r} & \THH_*(BP\langle 2\rangle,\BP\langle 1\rangle)\ar[d] \\
      \THH_*(\BP\langle 1\rangle)\otimes \mathbb{Q}\langle \sigma v_2\rangle \arrow[=>]{r} & \THH_*(BP\langle 2\rangle,\BP\langle 1\rangle)\otimes \mathbb{Q}
    \end{tikzcd}
  \]
  is injective on the torsion free part and therefore the torsion free summands in the top spectral sequence consist of permanent cycles.

  The map of spectral sequences induced by $\BP\langle 1\rangle\to H\mathbb{Z}_{(p)}$
allows us to determine the differentials modulo $v_1$, so that
  \[
    d_1(b_i) \dot{=} (v_0^{\nu_p(i-1)}b_{i-1} + \alpha ) \sigma v_2
  \]
  where $\alpha$ is a multiple of $v_1$. To prove our claim, it is sufficient to prove that $\alpha \in p \Z_{(p)}\{ v_0^{\nu_p(i-1)}b_{i-1} \}$, so that $v_0^{\nu_p(i-1)}b_{i-1} + \alpha \dot{=} v_0^{\nu_p(i-1)}b_{i-1}$ and thus the claimed formula  is true up to a different unit.

  Since
  \[
    v_1^{p^{\nu_p(i)} + \dots + p } b_i = 0
  \]
  it must also be the case that
  \begin{equation} \label{eq:orderalpha}
    v_1^{p^{\nu_p(i)} + \dots + p } \alpha = 0 \,.
  \end{equation}
  Let us write $k$ for the smallest integer such that $v_1^k \alpha = 0$. There are two possibilities:
  \begin{itemize}
  \item if $ |\alpha| < |b_i| \leq |v_1^k \alpha|$ then by Lemma~\ref{lem:thhell2} we have $|v_1^{p^{\nu_p(i)} + \dots + p } b_i| \leq |v_1^k \alpha|$. However, this implies that $|v_1^{p^{\nu_p(i)} + \dots + p } \alpha | < |v_1^k \alpha|$ so this is incompatible with equation \eqref{eq:orderalpha}.
  \item Otherwise, we have $|v_1^k \alpha | < |b_i|$. We know that $|\alpha | = |b_{i-1}|$ and that $|v_1^{p+1} b_{i-1}| < |b_i| < |v_1^{p+2} b_{i-1}|$ so that $k \leq p +1$. From Lemma~\ref{lem:thhell1}, we see that $v_1^p\alpha = 0$ and that $\alpha \in p\Z_{(p)}\{ v_0^{\nu_p(i-1)}b_{i-1} \}$.
  \end{itemize}
\end{proof}

\begin{remark}
The computation of the differential in Proposition~\ref{theorem:differentialellcoeff} can be considered as a computation of the action of the cap product by the class $e_{3}$ from Section~\ref{sec:bounding}, which lifts the class dual to $\mu_3\in \THH_{2p^3}(\BP\langle 2\rangle;\mathbb{F}_p)$. This cap product is computed explicitly on all classes modulo $v_1$ in~\cite[Corollary~5.7]{AHL10} and we combine this result with $v_1$-Bockstein spectral sequence information to produce the result. An explicit computation of the cap product on all of $\THH_*(\ell)$ does not appear in~\cite{AHL09} although they use certain formulas to produce hidden extensions. 
\end{remark}

We would now like to give a nice description of the kernel and cokernel of the $d_1$-differential. For this, we need the following lemma. 

\begin{lemma} \label{lem:thhell3}
  In $\THH_*(\BP\langle 1\rangle)$, $b_i$ and $v_0^{\nu_p(i-1)} b_{i-1}$ have the same order for multiplication by $p$ for any $ i \geq 2$ that is not a power of $p$. When $i = p^k$ for some $k \geq 1$, the order of $b_i$ is one more than that of $v_0^{\nu_p(i-1)} b_{i-1} = b_{i-1}$.
\end{lemma}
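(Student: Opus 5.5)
The plan is to read off both $p$-orders directly from the presentation of $\THH_*(\BP\langle 1\rangle)$ in Theorem~\ref{thm:AHL}, and then compare them. Write $i=\alpha p^n$ with $p\nmid\alpha$, so $n=\nu_p(i)$.

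\emph{The untwisted relations.} These are the two relations $v_0^hb_{\alpha p^n}=0$ for $h\ge n+1$ and $p\cdot v_0^hb_{\alpha p^n}=v_0^{h+1}b_{\alpha p^n}$. On their own they give that $v_0^h b_{\alpha p^n}$ generates a $p$-tower
\[
v_0^hb_{\alpha p^n},\ v_0^{h+1}b_{\alpha p^n},\ \dots,\ v_0^{n}b_{\alpha p^n}
\]
of length $n+1-h$. So, ignoring the third (twisted) relation, $b_i$ has order $p^{\nu_p(i)+1}$ and $v_0^{\nu_p(i-1)}b_{i-1}$ has order $p$. The real content of the lemma is therefore the twisted relation
\[
p\cdot b_{(\beta p+p-1)p^m}=v_0b_{(\beta p+p-1)p^m}+v_1^{p^{m+2}}v_0^{\nu_p(\beta)}b_{\beta p^{m+1}},
\]
which lengthens the tower starting at $b_{(\beta p+p-1)p^m}$ by gluing it onto the tower of $v_1^{p^{m+2}}v_0^{\nu_p(\beta)}b_{\beta p^{m+1}}$.

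\emph{Recursive order formula.} I would first establish a formula for the order of any generator $v_0^hb_j$. When $j$ is not of the form $(\beta p+p-1)p^m$, or when $h\ge 1$, the order is $p^{\nu_p(j)+1-h}$. When $j=(\beta p+p-1)p^m$ and $h=0$, the order is
\[
p^{m+1}\cdot \operatorname{ord}\!\left(v_1^{p^{m+2}}v_0^{\nu_p(\beta)}b_{\beta p^{m+1}}\right).
\]
Here the factor $p^{m+1}$ accounts for $b_j,\dots,v_0^mb_j$, and after that the tower continues into the other summand. To evaluate the second factor I would use the $v_1$-torsion bound $v_1^{p^{\nu_p(j)-h+1}+\dots+p}v_0^hb_j=0$, together with the degree bookkeeping of Lemmas~\ref{lem:thhell1} and~\ref{lem:thhell2}. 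These decide whether $v_1^{p^{m+2}}$ kills that element or not, and whether it is itself again the bottom of a twisted relation. This gives a recursion on $j$ that terminates because $\beta p^{m+1}<j$.

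\emph{Case analysis.} Next I would compare $b_i$ and $v_0^{\nu_p(i-1)}b_{i-1}$ according to the residue of $i$.
\begin{itemize}
\item If $p\nmid i$ and $p\nmid i-1$, both indices have valuation $0$ and neither sits in a twisted relation at the relevant level, so both orders are $p$.
\item If $i\equiv 1 \pmod p$, then $\nu_p(i-1)\ge 1$ and $v_0^{\nu_p(i-1)}b_{i-1}$ has order $p$. Here I would check that $b_i$ is not twisted, so it also has order $p$.
\item If $p\mid i$, then $i-1=(\beta p+p-1)p^0$ is twisted with $m=0$. The recursion gives $\operatorname{ord}(b_{i-1})=p\cdot\operatorname{ord}(v_1^{p^2}v_0^{\nu_p(\beta)}b_{\beta p})$ with $\beta p=i-p$. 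Iterating, $i-p$ is again either twisted at the next level or not, and the chain unwinds the base-$p$ digits of $i-1$ below the lowest nonzero digit of $i$.
\end{itemize}
When $i$ is not a power of $p$, the chain should have exactly $\nu_p(i)+1$ links, matching $\operatorname{ord}(b_i)=p^{\nu_p(i)+1}$ (with $b_i$ itself untwisted). When $i=p^k$, all lower digits of $i-1=p^k-1$ are $p-1$. Then the chain runs one step short: it ends at $b_0$, which does not exist, so only $k$ factors of $p$ appear. This gives order $p^k$ for $b_{i-1}$ against $p^{k+1}$ for $b_i$.

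\emph{Main obstacle.} The hard step is the evaluation of the recursive factor, namely showing that $v_1^{p^{m+2}}v_0^{\nu_p(\beta)}b_{\beta p^{m+1}}$ is nonzero with exactly the predicted order, rather than being killed by the $v_1$-torsion relation. I would handle this by the degree comparison of Lemma~\ref{lem:thhell2}, in the same way it is used in the proof of Proposition~\ref{theorem:differentialellcoeff}. As a cross-check, I would use the $\Z_{(p)}$-coefficient differentials $d_1(b_i)\dot= p^{\nu_p(i-1)}b_{i-1}\sigma v_2$ together with the $v_1$-Bockstein description from \cite{AHL10}*{Theorem~6.4}, to confirm that the orders agree modulo the extension data.
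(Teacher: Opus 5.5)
You follow the paper's route: read the $p$-orders off the presentation in Theorem~\ref{thm:AHL} and, for $p\mid i$, follow the chain of twisted relations starting at $b_{i-1}$. That chain has one link per trailing digit $p-1$ of $i-1$, and it loses its last link exactly when $i=p^k$, because that link would need $\beta\ge 1$. This half of your plan is sound. Along the chain the parallel $v_0$-towers die, first because $v_0b_{i-1}=0$ and afterwards because of the accumulated power of $v_1$. So $\operatorname{ord}(b_{i-1})=p^{\nu_p(i)+1}$ when $i$ is not a power of $p$, and $p^{k}$ when $i=p^k$.

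The genuine gap is the other half. All three of your cases rest on $b_i$ itself being untwisted, so that $\operatorname{ord}(b_i)=p^{\nu_p(i)+1}$, and this fails.

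\emph{Case $i\equiv 1 \pmod p$ at $p=2$.} Every odd $i\ge 3$ equals $(\beta p+p-1)p^0$ with $\beta\ge 1$. For example $2b_3=v_0b_3+v_1^4b_2=v_1^4b_2$. This is nonzero because the only $v_1$-torsion relation on $b_2$ is $v_1^{6}b_2=0$. So $b_3$ has order $4$ while $v_0b_2$ has order $2$. This is not an artifact of the presentation: in the $v_1$-Bockstein spectral sequence $d_2(a_4)\dot{=}v_1^2b_3$ and $d_6(2a_4)\dot{=}v_1^6b_2$, which forces $2v_1b_3\dot{=}v_1^5b_2\neq 0$.

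\emph{Case $p\nmid i(i-1)$, $p$ odd.} Your first case breaks when $i\equiv p-1 \pmod p$ and $i\ge 2p-1$. At $p=3$ we have $3b_5=v_1^{9}b_3\neq 0$, since only $v_1^{12}b_3=0$. So $b_5$ has order $9$ while $b_4$ has order $3$.

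\emph{Case $p\mid i$.} Here $b_i$ can be twisted too. At $p=2$ we get $4b_{14}=2v_1^8b_{12}=v_1^{24}b_8\neq 0$, so $b_{14}$ has order $8$ while $b_{13}$ has order $4$.

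So the check ``$b_i$ is not twisted'' cannot be carried out. With the relations of Theorem~\ref{thm:AHL}, the equality of orders claimed in the statement is actually false in these cases.

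The paper's proof makes the same tacit assumption. Its first case asserts $p\cdot b_i=0$ for $i\equiv 1\pmod p$, and its second takes $\operatorname{ord}(b_i)=p^{\nu_p(i)+1}$ without checking whether $b_i$ is twisted. So the defect is in the statement itself, not only in your write-up. In each such case the excess order of $b_i$ is carried by $v_1^{p}$-divisible classes, since $pb_i-v_0b_i\in\im(\times v_1^{p^{m+2}})$. The application in Proposition~\ref{prop:specseqtorsion} would have to rely on that instead.

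Two smaller points. First, your general recursive formula is wrong for $m\ge 1$. The twisted relation attaches at the bottom of the tower, so $p^kb_j=v_0^kb_j+p^{k-1}X$. The exponent is therefore $\max(m+1,1+e)$ with $p^e=\operatorname{ord}(X)$, not $m+1+e$. For example, at $p=2$ we get $4b_6=v_0^2b_6+v_1^8v_0b_4=0$, so $b_6$ has order $4$, not $8$. Second, the step you single out as the main obstacle is immediate. The element $v_0^{\nu_p(\beta)}b_{\beta p^{m+1}}$ has $v_1$-torsion exponent $p^{m+2}+\dots+p>p^{m+2}$ directly from Theorem~\ref{thm:AHL}, so Lemma~\ref{lem:thhell2} is not needed.
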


\begin{proof}
  If $p$ divides $i - 1$, then $p \cdot v_0^{\nu_p(i-1)} b_{i-1} = 0$ and $i \equiv 1 \pmod{p}$ so that $p \cdot b_i = 0$ too.

  If $p$ does not divide $i-1$ and $i$ is not a power of $p$, we can see from the description of $\THH_*(\BP\langle 1\rangle)$ in Theorem~\ref{thm:AHL} that the order $k$ for multiplication by $p$ of $v_0^{\nu_p(i-1)} b_{i-1} = b_{i -1}$ is such that $k -1$ is the number of digits that are $p - 1$ on the right of the base-$p$ writing of $i - 1$. Then $k-1$ is also the number of carry when adding one to $i -1$, and thus also the $p$-adic valuation of $i$, so that $k$ is the order for multiplication by $p$ of $b_i$.

  If $i$ is a power of $p$, there is one less extension in the tower above $b_{i-1}$, so that the order of $b_i$ is one more than that of $b_{i-1}$.
\end{proof}
We are now prepared to give our nicer description of the kernel and cokerel of the $d_1$-differential. 
\begin{proposition} \label{prop:specseqtorsion}
  In the spectral sequence
  \[
    \THH_*(\BP\langle 1\rangle) \langle \sigma v_2 \rangle \Rightarrow \THH_*(\BP \langle 2 \rangle; \BP\langle 1\rangle)
  \]
  restricted to the torsion elements, the differential $d_1$ satisfies
  \[
    \operatorname{coker} d_1 = \operatorname{coim}(\times v_1^p) \subset \THH_*(\BP\langle 1\rangle)\{ \sigma v_2 \}
  \]
  and
  \[
    \ker d_1 =  \im (\times v_1^p) \oplus M \subset \THH_*(\BP\langle 1\rangle)\{ 1 \}
  \]
  where $\times v_1^p$ is the multiplication by $v_1^p$ map in the proper copy of $\THH_*(\BP\langle 1\rangle)$ and $M$ is the $\BP\langle 1\rangle_*$-submodule of $\THH_*(\BP\langle 1\rangle)$ generated by the $v_0^{k}b_{p^k}$ for all $k \geq 1$.
\end{proposition}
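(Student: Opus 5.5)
By Proposition~\ref{theorem:differentialellcoeff}, on torsion the differential $d_1$ is $v_1$-linear and sends $b_i$ to a unit times $v_0^{\nu_p(i-1)}b_{i-1}\sigma v_2$. The torsion part of $\THH_*(\BP\langle 1\rangle)$ is generated over $\Z_{(p)}[v_1]$ by the classes $v_0^hb_{\alpha p^n}$. The plan is therefore to determine $d_1$ on each generator and then compute kernel and cokernel tower by tower.

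First we extend the formula from $b_i$ to $v_0^h b_i$. For $h\ge 1$, $v_0^hb_i$ is obtained from $b_i$ by multiplication by $p$, possibly corrected by a $v_1$-multiple, via the relations of Theorem~\ref{thm:AHL}. Since $d_1$ is $p$-linear, $d_1(v_0^hb_i)$ is essentially $p^h\cdot v_0^{\nu_p(i-1)}b_{i-1}\sigma v_2$. By Lemma~\ref{lem:thhell3}, $b_i$ and $v_0^{\nu_p(i-1)}b_{i-1}$ have the same additive order when $i$ is not a power of $p$. Hence $d_1$ maps the $p$-tower on $b_i$ isomorphically onto the $p$-tower on $v_0^{\nu_p(i-1)}b_{i-1}\sigma v_2$, up to $v_1$-multiples. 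When $i=p^k$, the order of $b_{p^k}$ exceeds that of $b_{p^k-1}$ by one. The top class $v_0^kb_{p^k}$ of its tower therefore lies in the kernel, and these classes generate the summand $M$.

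Next we compare the $v_1$-torsion exponents. The generator $v_0^hb_j$ is killed by $v_1^{p^{\nu_p(j)-h+1}+\dots+p}$. We need to see that passing from $b_i$ to $b_{i-1}$ shortens the $v_1$-tower by exactly $p$: the exponent for the target is always $p$ shorter in the matching position, or the target tower has exponent $p$ when $p\mid i-1$. Given this, on each $v_1$-tower the map $d_1$ is an injection of $\Z_{(p)}[v_1]/v_1^{N}$ into $\Z_{(p)}[v_1]/v_1^{N+p}$, shifted so that its image lies in the degrees $\ge$ those of $v_1^p\cdot(\text{target generator})$. Reading this backwards gives the two identifications. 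The cokernel is the target modulo $v_1^p$-multiples, i.e.\ $\operatorname{coim}(\times v_1^p)$. The kernel consists of the bottom $v_1^p$-segments of the source towers, which is exactly $\im(\times v_1^p)$ in the source copy, together with $M$.

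The main obstacle is this bookkeeping of $v_1$-exponents against the twisted $p$-relations $p\cdot b_{(\beta p+p-1)p^n}=v_0b_{\cdots}+v_1^{p^{n+2}}v_0^{\nu_p(\beta)}b_{\beta p^{n+1}}$. These relations mix towers, so "tower by tower" is not literal. I would handle this by filtering by $v_1$-adic degree, using Lemma~\ref{lem:thhell1} and Lemma~\ref{lem:thhell2} to control degrees. Concretely, I would first verify the claims on associated graded modules, where the relations split, then lift the conclusion by a five-lemma/length-counting argument. Each degree is a finite $p$-group, so it suffices to compare orders degreewise.
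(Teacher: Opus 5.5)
Your second paragraph (matching $p$-orders with Lemma~\ref{lem:thhell3} and isolating $M$) agrees with the paper. The $v_1$-exponent bookkeeping in your third paragraph, however, is wrong, and the tower picture you build on it is inverted. This is the step that produces $\im(\times v_1^p)$ and $\operatorname{coim}(\times v_1^p)$, so the error is not cosmetic.

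First, by degrees, $|d_1(b_i)|=|b_i|-1=2ip^2+2p-3=|v_0^{\nu_p(i-1)}b_{i-1}\sigma v_2|$. So $d_1(b_i)$ is a unit times the bottom class of the target tower, not a class in degrees $\ge$ those of $v_1^p$ times it. Second, the target class $v_0^{\nu_p(i-1)}b_{i-1}$ always has $h=n$ in the notation of Theorem~\ref{thm:AHL}, so the relation there reads $v_1^{p}\cdot v_0^{\nu_p(i-1)}b_{i-1}=0$. Thus every target tower has $v_1$-exponent exactly $p$, whether or not $p\mid i-1$. The source $b_i$ has exponent $p^{\nu_p(i)+1}+\dots+p$, which equals $p$ when $p\nmid i$ and is at least $p^2+p$ otherwise. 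For instance, $v_1^{p}b_p\neq 0=v_1^{p^2+p}b_p$, while $d_1(b_p)\dot{=}b_{p-1}\sigma v_2$ is killed by $v_1^p$. So the exponents never differ by exactly $p$, and $d_1$ is not an injection $\Z_{(p)}[v_1]/v_1^N\to\Z_{(p)}[v_1]/v_1^{N+p}$. It maps each source tower onto a $v_1^p$-torsion target tower and kills the $v_1^p$-multiples.

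An injection on towers would give $\ker d_1=M$ and a cokernel equal to the target modulo $v_1^p$-multiples, i.e.\ $\operatorname{coker}(\times v_1^p)$. That is not $\operatorname{coim}(\times v_1^p)=\text{target}/\ker(\times v_1^p)$. For example, $b_{p-1}\sigma v_2$ is nonzero in the former and zero in the latter, and indeed it is $d_1(b_p)$ up to a unit. Likewise, the ``bottom $v_1^p$-segments'' $\{v_1^kx : k<p\}$ are complementary to $\im(\times v_1^p)$, not equal to it. Your last two sentences therefore name the right modules, but they do not follow from what precedes them.

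The missing observation is the one above, and it is how the paper argues. Because every $v_0^{\nu_p(i-1)}b_{i-1}$ is $v_1^p$-torsion, $\im d_1\subseteq\ker(\times v_1^p)$. Lemma~\ref{lem:thhell3} upgrades this to $d_1(\BP\langle 1\rangle_*\{b_i\})=\BP\langle 1\rangle_*\{v_0^{\nu_p(i-1)}b_{i-1}\sigma v_2\}$. Since these classes generate $\ker(\times v_1^p)$, one gets $\im d_1=\ker(\times v_1^p)$, so the cokernel is the coimage. Dually, $v_1$-linearity puts $\im(\times v_1^p)$ inside $\ker d_1$, and the remaining classes $v_1^kv_0^hb_i$ with $k<p$ support nonzero differentials except those in $M$.

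Your idea of filtering to control the twisted relations $p\cdot b_{(\beta p+p-1)p^n}=\dots$ is a reasonable way to make this rigorous; the paper simply appeals to the description of $\THH_*(\BP\langle 1\rangle)$ at that point. It has to be run on the corrected picture, though.
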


\begin{proof}
  The formula for the differentials of Proposition~\ref{theorem:differentialellcoeff} implies that $\im d_1 \subset \ker (\times v_1^p)$, since for any $i \geq 2$, $d_1(b_i) \in \Z_{(p)}\{ v_0^{\nu_p(i-1)} b_{i-1} \sigma v_2 \} \subset \ker (\times v_1^p)$. But Lemma~\ref{lem:thhell3} implies that
  \[
    d_1(\BP\langle 1\rangle_*\{ b_i \} ) = \BP\langle 1\rangle_* \{ v_0^{\nu_p(i-1)} b_{i-1} \sigma v_2 \}
  \]
  and the description of $\THH_*(\BP\langle 1\rangle)$ implies that
  \[
    \ker (\times v_1^p) = \bigoplus_{i \geq 2} \BP\langle 1\rangle_* \{ v_0^{\nu_p(i-1)} b_{i-1} \}
  \]
  so that
  \[
    \im d_1 = \ker (\times v_1^p).
  \]

  On the other hand, the description of $\THH_*(\BP\langle 1\rangle)$ also implies that a class $\alpha \notin \im (\times v_1^p)$ is a sum of element of the form $v_1^k v_0^h b_i$ with $0 \leq k < p$. 
  But Lemma~\ref{lem:thhell3} implies that these elements, except for those in $M$, are exactly the sources of the non-zero differentials, so that
  \[
    \ker d_1 = \im (\times v_1^p) \oplus M.
  \]
\end{proof}

%% file: extensions.tex
\subsection{Computing the extensions}\label{sec:extensions}

Between the $\mathrm{E}^2= \mathrm{E}^\infty$-page of the one step spectral sequence
\[
  \THH_*(\BP \langle 1 \rangle)\langle \sigma v_2 \rangle \Rightarrow \THH_*(\BP \langle 2 \rangle ; \BP \langle 1 \rangle)
\]
and the target module $\THH_*(\BP \langle 2 \rangle ; \BP \langle 1 \rangle)$, we have to solve extension problems of the following form: an element $\alpha$ of the $\mathrm{E}^\infty$-page, non divisible by $\sigma v_2$ and such that $p \cdot \alpha = 0$, can lift to an element $\bar{\alpha} \in \THH_*(\BP \langle 2 \rangle ; \BP \langle 1 \rangle)$ such that $p \cdot \bar{\alpha} = \beta$ where $\beta$ is a lift of a multiple of $\sigma v_2$. We will solve these extension problems using what we know of $\THH_*(\BP \langle 2 \rangle ; \Z_{(p)})$. 

\begin{lemma}\label{lem432}
  There can be no $\mathrm{BP}\langle 1\rangle_*$-module extensions between torsion elements.
\end{lemma}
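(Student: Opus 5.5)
Since the spectral sequence has only two filtrations, there is a short exact sequence
\[
0 \to \operatorname{coker} d_1 \to \THH_*(\BP\langle 2\rangle;\BP\langle 1\rangle) \to \ker d_1 \to 0,
\]
with $\operatorname{coker} d_1$ the $\sigma v_2$-line and $\ker d_1$ the $1$-line. The plan is to show that this sequence splits on torsion as $\BP\langle 1\rangle_*$-modules. By Proposition~\ref{prop:specseqtorsion}, the torsion in $\ker d_1$ is $\im(\times v_1^p)\oplus M$, and the torsion in $\operatorname{coker} d_1$ is $\operatorname{coim}(\times v_1^p)\{\sigma v_2\}$. A possible extension is a relation $p\cdot\bar\alpha=\beta$ or $v_1\cdot\bar\alpha=\beta$, where $\bar\alpha$ lifts a torsion class $\alpha$ of filtration $0$ and $\beta$ lies in filtration $1$.

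Both filtrations are $\BP\langle 1\rangle_*$-modules whose generators are explicit classes $v_1^kv_0^hb_j$, with $\sigma v_2$ adjoined in filtration $1$. So it is enough to handle extensions from the module generators $\alpha$ of $\ker d_1$. First I check degrees. Torsion generators in filtration $0$ have degree $|b_j|+2k(p-1)$, with $k\ge p$ in $\im(\times v_1^p)$ or $\alpha=v_0^kb_{p^k}\in M$. Targets in filtration $1$ have degree $|b_i|+2p^2-1+2k'(p-1)$ with $0\le k'<p$, since $\operatorname{coim}(\times v_1^p)$ is represented by $v_1^{k'}v_0^hb_i$ with $k'<p$. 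These two families have opposite parity: $|b_j|$ is even and the $\sigma v_2$-line is odd. A $p$-extension preserves degree and a $v_1$-extension raises degree by the even number $2p-2$, so neither can change parity. I will verify this parity bookkeeping first, because it should dispose of every extension among torsion classes at once.

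Parity alone does not rule out an extension from a torsion class to a torsion-free class. To handle the torsion-only statement cleanly, I would also map to $\THH_*(\BP\langle 2\rangle;\Z_{(p)})$ and to the rationalization via Proposition~\ref{prop:rational}. In the rationalization, torsion classes of the abutment map to zero. A class $\beta$ hit by an extension from torsion must itself be torsion, because $p^N\bar\alpha=0$. Hence $\beta$ lies in the torsion of $\operatorname{coker} d_1$, and the parity argument applies to it.

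The main obstacle is the parity check. It is clean in $\THH_*(\BP\langle 1\rangle)$, but the torsion classes $a$-type with odd degrees must be examined. By Theorem~\ref{thm:AHL}, the $a$-classes are torsion-free modulo the listed relations, so all torsion is of $b$-type, which has even degree. If some odd-degree torsion appeared after all, I would fall back on comparing with $\THH_*(\BP\langle 2\rangle;\Z_{(p)})$ from Theorem~\ref{thm:BP2-coeff}. There I would reduce modulo $v_1$ and use the orders computed in Lemma~\ref{lem:thhell3} to show that the group orders already match the $\mathrm{E}^\infty$ page with no room for extensions.
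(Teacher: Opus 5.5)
Your parity argument is the paper's proof. Torsion on the $1$-line of the $\mathrm{E}^\infty$-page lies in the even degrees $|v_1^kb_i|$, and torsion on the $\sigma v_2$-line lies in the odd degrees $|v_1^hb_j\sigma v_2|$. Multiplication by $p$ or by $v_1$ cannot change parity, so this alone proves the lemma, and your check that all torsion in $\THH_*(\BP\langle 1\rangle)$ is of $b$-type is the right sanity check.

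You should drop the second paragraph, because it is wrong as stated. A torsion class $\alpha\in\ker d_1$ need not have a torsion lift $\bar\alpha$, so you cannot assume $p^N\bar\alpha=0$. In fact the paper shows that the extensions $p\cdot b_1=\lambda_1\sigma v_2$ and $p\cdot v_0^kb_{p^k}=v_1^{p^{k+1}-p}v_0^{k-1}a_{p^{k-1}}\sigma v_2$ do occur. So extensions from torsion $\mathrm{E}^\infty$-classes to torsion-free classes are genuine, and they are treated separately in Lemma~\ref{lem:onlyextensions} and the proposition after it.
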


\begin{proof}
  The torsion elements are in degrees
  \[
    \begin{gathered}
      |v_1^k b_i | = 2k(p-1) + 2i p^2 + 2p - 2 \\
      |v_1^h b_j \sigma v_2| = 2h(p-1) + 2(j+1)p^2 + 2p - 3
    \end{gathered}
  \]
  respectively for the non mutliple and multiple of $\sigma v_2$. Since the former is even and the latter odd, there can be no $\mathrm{BP}\langle 1\rangle_*$-module extensions.
\end{proof}

\begin{lemma} \label{lem:onlyextensions}
  The only possible $\mathrm{BP}\langle 1\rangle_*$-module extensions %
  are of the form
  \[
    p \cdot b_1 = \lambda_1 \sigma v_2
  \]
  and
  \[
    p \cdot v_0^k b_{p^k} = v_1^{p^{k+1} - p}v_0^{k-1} a_{p^{k-1}} \sigma v_2
  \]
  for $k \geq 1$.
\end{lemma}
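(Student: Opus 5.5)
The plan is to enumerate the possible sources and targets of $\mathrm{BP}\langle 1\rangle_*$-module extensions on the $\mathrm{E}^\infty$-page using Proposition~\ref{prop:specseqtorsion}, and then cut the candidate list down by degree and divisibility.

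\textbf{Setup.} An extension has a source $\alpha$ in filtration $0$, an element of $\ker d_1 \subset \THH_*(\BP\langle 1\rangle)\{1\}$. It satisfies $x\cdot\alpha=0$ on $\mathrm{E}^\infty$ for some $x\in\{p,v_1\}$, while $x\cdot\bar\alpha$ is a nonzero class in filtration $1$, that is, in $\operatorname{coker} d_1\{\sigma v_2\}$ together with the torsion-free part times $\sigma v_2$.

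\textbf{Step 1: restrict the source.} Since $v_1$ and $p$ act injectively on the torsion-free part of $\THH_*(\BP\langle 1\rangle)$, and Proposition~\ref{theorem:differentialellcoeff} shows these classes are permanent cycles, the source $\alpha$ must be torsion. So $\alpha$ lies in $\im(\times v_1^p)\oplus M$.

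\textbf{Step 2: restrict the target.} By Lemma~\ref{lem432} the target cannot be torsion. It must therefore be a torsion-free class times $\sigma v_2$, of the form $v_1^m\lambda_1\sigma v_2$, $v_1^m v_0^n a_{p^n}\sigma v_2$, or $v_1^m\sigma v_2$. The last has odd degree, while sources $v_1^kv_0^hb_i$ have even degree, so it is excluded.

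\textbf{Step 3: match degrees.} I compare
\[
|v_1^kv_0^hb_i| = 2p^2i+2p-2+2k(p-1)
\]
with
\[
|v_1^m\lambda_1\sigma v_2| = 2p^2+2p-2+2m(p-1)
\qquad\text{and}\qquad
|v_1^mv_0^na_{p^n}\sigma v_2| = 2p^{n+2}+2p^2-2+2m(p-1).
\]
In the first case this forces relations like $p^2(i-1)=(m-k)(p-1)$. In the second, the natural solution is $i=p^{n+1}$, $h=n+1$, $k=0$, with $m=p^{n+2}-p$ after reindexing $n+1=k$. This matches the claimed formula $p\cdot v_0^kb_{p^k}=v_1^{p^{k+1}-p}v_0^{k-1}a_{p^{k-1}}\sigma v_2$, and $i=1$, $k=m=0$ gives $p\cdot b_1=\lambda_1\sigma v_2$.

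\textbf{Step 4: exclude the remaining candidates.}
\begin{itemize}
\item \emph{Sources in $M$.} The generators $v_0^kb_{p^k}$ are precisely the classes singled out in Proposition~\ref{prop:specseqtorsion} as surviving although $v_1^p$ does not act on them trivially in the usual way. Their $v_1$-multiples, and sources with $k>0$, would need a target $v_1^{m+k}(\cdots)$. I plan to rule these out by $v_1$-linearity: such an extension would be $v_1^k$ times the basic one, and hence not a new extension.
\item \emph{Sources in $\im(\times v_1^p)$.} Write $\alpha=v_1^p\gamma$. Then $p\bar\alpha=v_1^p(p\bar\gamma)$. Either $p\bar\gamma$ is zero or torsion, so no torsion-free target arises, or $\gamma$ is itself among the basic sources above, in which case the extension is not new.
\item \emph{$v_1$-extensions.} If $v_1\bar\alpha$ were a nonzero torsion-free class times $\sigma v_2$, then $\bar\alpha$ would have a nonzero rational image. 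I will rule this out by comparing with Proposition~\ref{prop:rational}: the torsion-free part of the $\mathrm{E}^\infty$-page already has full rank $\mathbb{Q}[v_1]\langle\sigma v_1,\sigma v_2\rangle$, so no torsion class of $\mathrm{E}^\infty$ can acquire a rationally nonzero lift whose $v_1$-multiple is new. I expect this rank count to also dispose of the remaining degree coincidences in the $p$-extension case.
\end{itemize}

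\textbf{Main obstacle.} I expect the hardest part to be Step 4, excluding extensions from sources in $\im(\times v_1^p)$ and the stray degree coincidences. There, the naive degree argument does not suffice, and I would fall back on the map to $\THH_*(\BP\langle 2\rangle;\mathbb{Z}_{(p)})$ from Theorem~\ref{thm:BP2-coeff}, reducing modulo $v_1$ to detect or exclude each candidate extension.
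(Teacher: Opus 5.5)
Your proposal has a genuine gap in Step~4, and the exclusion work all lands there. The degree equation from Step~3 reduces to $p^2(i-p^n)=(p+m-k)(p-1)$, and it has many solutions besides the one you call natural. For instance, at $p=2$ the class $v_1^2b_2$ is a nonzero $d_1$-cycle in $\im(\times v_1^p)$, and it has the same degree $22$ as $v_1^4a_1\sigma v_2$.

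Your treatment of such sources writes $\alpha=v_1^p\gamma$ and uses $p\bar\alpha=v_1^p(p\bar\gamma)$. But $\gamma$ need not be a $d_1$-cycle. In this example $\gamma\equiv b_2$ modulo $\ker(\times v_1^p)$, and $d_1(b_2)\dot{=}b_1\sigma v_2\neq 0$. In general $v_1^pb_i$ survives precisely because $b_i$ supports $d_1(b_i)\dot{=}v_0^{\nu_p(i-1)}b_{i-1}\sigma v_2$. So there is no $\bar\gamma$, and the lift of $\alpha$ (the class the paper later calls $c_i$) is not a $v_1^p$-multiple in $\THH_*(\BP\langle 2\rangle;\BP\langle 1\rangle)$.

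The rank count for $v_1$-extensions also fails. A torsion class of $\mathrm{E}^\infty$ with a rationally nonzero lift is fully compatible with Proposition~\ref{prop:rational}, because rationally that lift lies in filtration one. The lemma's own extension $p\bar b_1=\lambda_1\sigma v_2$, which does occur, is of exactly this kind. It forces $v_1^p\bar b_1=a_1\sigma v_2$ even though $v_1^pb_1=0$ on $\mathrm{E}^\infty$: use $pa_1=v_1^p\lambda_1$ and the fact that even-degree classes in filtration one are torsion-free. So your argument proves too much.

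Your fallback, reducing modulo $v_1$, cannot exclude candidates with $v_1$-divisible targets such as $v_1^4a_1\sigma v_2$, since these map to zero.

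The missing idea is the paper's re-choice of lift, driven by the $v_1$-order of the source.
\begin{itemize}
\item Let $m$ be minimal with $v_1^m\alpha=0$ on $\mathrm{E}^\infty$, and suppose $p\bar\alpha=\beta\sigma v_2$ with $\beta$ torsion-free.
\item Then $pv_1^m\bar\alpha=v_1^m\beta\sigma v_2\neq 0$, so $v_1^m\bar\alpha$ is a nonzero class in filtration one and even degree. There, filtration one is torsion-free of rank at most one, so $v_1^m\beta$ is divisible by $p$.
\item If already $\beta=p\beta'$, then $\hat\alpha=\bar\alpha-\beta'\sigma v_2$ is another lift with $p\hat\alpha=0$ and $v_1^m\hat\alpha=0$. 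The extension is then an artifact of the chosen lift. This is what happens for $v_1^2b_2$, since $v_1^4a_1=2v_0a_2$ at $p=2$.
\item If $\beta$ is not divisible by $p$, divisibility must jump between $\beta$ and $v_1^m\beta$. That means crossing one of the relations $pa_1=v_1^p\lambda_1$ or $pv_0^na_{p^n}=v_1^{p^{n+1}}v_0^{n-1}a_{p^{n-1}}$, which sit in degree $2(p^n+1)p^2-2$ after multiplying by $\sigma v_2$.
\item Compare this with $|v_1^m\alpha|=2(i+p^{\nu_p(i)-h})p^2-2$ for $\alpha=v_1^kv_0^hb_i$. Only $b_1$ and $v_0^kb_{p^k}$ remain, up to multiplication by $v_1^j$ with $j<p$.
\end{itemize}
This argument handles sources in $\im(\times v_1^p)$ uniformly without ever lifting $\gamma$. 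It also makes precise that ``no extension'' means ``no extension after re-choosing lifts'', which your framework does not account for.
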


\begin{proof}
  We have to investigate the extensions between a non $\sigma v_2$-multiple of  a torsion element and a non-torsion $\sigma v_2$-multiple element. As in Lemma~\ref{lem432}, the non-torsion element needs to be in even degree, that is of the form
  \begin{equation} \label{eq:torsionliftform}
    v_1^h p^k \lambda_1 \sigma v_2 \text{ or } v_1^h p^k v_0^n a_{p^n} \sigma v_2.
  \end{equation}

  Remember the spectral sequence \eqref{BrunSS} is a long exact sequence of the form:
  \[
    \dots \rightarrow \THH_*(\BP \langle 1 \rangle)\{ \sigma v_2\} \rightarrow \THH_*(\BP \langle 2 \rangle ; \BP \langle 1 \rangle) \rightarrow \THH_*(\BP \langle 1 \rangle)\{1\} \rightarrow \dots
  \]
  Let $\alpha \in \THH_*(\BP \langle 1 \rangle)\{1\}$ be a torsion element in the $E^\infty$ page, such that $p \cdot \alpha = 0$. Write $m$ for the smallest integer such that $v_1^m \alpha = 0$. Let $\beta \sigma v_2 \in \THH_*(\BP \langle 1 \rangle)\{ \sigma v_2\}$ be of the form (\ref{eq:torsionliftform}), that is of a non torsion element of even degree. Write $\bar{\beta} \sigma v_2$ for its image in $\THH_*(\BP \langle 2 \rangle ; \BP \langle 1 \rangle)$. Suppose that $\beta \sigma v_2$ is divisible by $p$, with $p \cdot \beta' \sigma v_2 = \beta \sigma v_2$, and write $\bar{\beta'} \sigma v_2$ for its images, so that $p \cdot \bar{\beta'} \sigma v_2 = \bar{\beta} \sigma v_2$.

  If the extension $p \cdot \bar{\alpha} = \bar{\beta} \sigma v_2$ occurs, in $\THH_*(\BP \langle 2 \rangle ; \BP \langle 1 \rangle)$ we must have
  \[
    p v_1^m \bar{\alpha} = v_1^m p \bar{\alpha} = v_1^m \beta \sigma v_2 \neq 0
  \]
  so that $v_1^m \bar{\alpha} \neq 0$. However, the image of $v_1^m \bar{\alpha}$ in $\THH_*(\BP \langle 1 \rangle)\{1\}$ is $v_1^m \alpha = 0$, so that $v_1^m \bar{\alpha}$ must lift to $\THH_*(\BP \langle 1 \rangle)\{ \sigma v_2\}$. Since the non-torsion generators in even degree in $\THH_*(\BP \langle 1 \rangle)\{ \sigma v_2\}$ are unique up to a unit, the only possible lift is $v_1^m \beta' \sigma v_2$, which implies that in $\THH_*(\BP \langle 2 \rangle ; \BP \langle 1 \rangle)$ we have an equality
  \[
    v_1^m \bar{\alpha} = v_1^m \bar{\beta'} \sigma v_2.
  \]
  Let
  \[
    \hat{\alpha} = \bar{\alpha} - \bar{\beta'} \sigma v_2
  \]
  be a new lift for $\alpha$. We have $p \cdot \hat{\alpha} = 0$ and $v_1^m \hat{\alpha} = 0$, so that we have found a lift that has the same properties as $\alpha \in \mathrm{E}^\infty$.

  Thus, the only extensions we need to care about are the ones where $\beta \sigma v_2$ is not already divisible by $p$. However, it must still be the case that $v_1^m \bar{\alpha} \neq 0$, so that $v_1^m \beta \sigma v_2$ must be divisible by $p$ in $\THH_*(\BP \langle 1 \rangle)\{\sigma v_2\}$.

  Recall the relations in the non-torsion of $\THH_*(\BP \langle 1 \rangle)$
  \[
    \begin{gathered}
      p\cdot a_1 = v_1^p \lambda_1 \\
      p\cdot v_0^n a_{p^n} = v_1^{p^{n+1}}v_0^{n-1} a_{p^{n-1}}
    \end{gathered}
  \]
  whose degrees in $\THH_*(\BP \langle 1 \rangle)\{ \sigma v_2\}$ are $|v_0^n a_{p^n}\sigma v_2| = 2(p^n + 1)p^2 - 2$. The only suitable $\alpha$ in the $\mathrm{E}^\infty$-page with $v_1^m \alpha$ of the right degree are the element of $M$, that is the $v_0^k b_{p^k}$, since in general we have
  \[
    |v_1^{p^{\nu_p(i) - h + 1 } + \dots + p} v_0^h b_i| = 2(i + p^{\nu_p(i) - h})p^2 - 2 \,.
  \]
\end{proof}

\begin{proposition}
  The $\BP\langle 1\rangle_*$-module extensions of Lemma \ref{lem:onlyextensions} all occur. 
\end{proposition}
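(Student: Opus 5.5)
We need to show that the extensions $p\cdot b_1=\lambda_1\sigma v_2$ and $p\cdot v_0^kb_{p^k}=v_1^{p^{k+1}-p}v_0^{k-1}a_{p^{k-1}}\sigma v_2$ are nontrivial in $\THH_*(\BP\langle 2\rangle;\BP\langle 1\rangle)$. The plan is to compare with $\THH_*(\BP\langle 2\rangle;\Z_{(p)})$ of Theorem~\ref{thm:BP2-coeff}, using the $v_1$-Bockstein spectral sequence
\[
\THH_*(\BP\langle 2\rangle;\Z_{(p)})[v_1]\implies \THH_*(\BP\langle 2\rangle;\BP\langle 1\rangle).
\]
The key observation is that in $\THH_*(\BP\langle 2\rangle;\Z_{(p)})$ the torsion-free part is only $\Z_{(p)}\langle\lambda_1,\lambda_2\rangle$. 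The torsion-free part of the target therefore has rank controlled rationally by Proposition~\ref{prop:rational}: rationally it is $\mathbb{Q}[v_1]\langle\sigma v_1,\sigma v_2\rangle$, with $\sigma v_1=\lambda_1$ and $\sigma v_2$ corresponding to $\lambda_2$ up to decomposables.

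I would argue by contradiction, one $k$ at a time. Suppose the extension $p\cdot \bar\alpha=\bar\beta\sigma v_2$ fails for $\alpha=v_0^kb_{p^k}$ (or $\alpha=b_1$). Then $\bar\alpha$ can be chosen $p$-torsion. Its $v_1$-torsion behaviour is then dictated by the $E^\infty$-page: the lift still satisfies $v_1^m\bar\alpha\neq0$, since $v_1^m\bar\alpha$ lifts to a multiple of $v_1^m\beta'\sigma v_2$ as in the proof of Lemma~\ref{lem:onlyextensions}, but now $\beta'$ need not exist. So we get a class of order $p$ that is $v_1$-torsion-free or has the wrong $v_1$-order.

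Reducing modulo $v_1$ via the cofiber sequence for $v_1$ produces an exact sequence
\[
\THH_*(\BP\langle 2\rangle;\BP\langle 1\rangle)\xrightarrow{v_1}\THH_*(\BP\langle 2\rangle;\BP\langle 1\rangle)\to\THH_*(\BP\langle 2\rangle;\Z_{(p)}).
\]
I would then count the $\Z_{(p)}$-module structure in degree $|b_{p^k}|=2p^{k+2}+2p-2$ of $\THH_*(\BP\langle 2\rangle;\Z_{(p)})$. By Theorem~\ref{thm:BP2-coeff}, this degree contains the class $\lambda_1\lambda_2\mu_3^{p^{k-1}}$-type summand of $\lambda_1\lambda_2T_0^2$ (image of $b_{p^k}$, compare the Corollary with $b_{pi+1}$, suitably reindexed). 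Its order is $p^{k}$ (or $p^{k+1}$), which is strictly larger than the order the cokernel of $v_1$ would have if the extension were split: a split extension would give the mod-$v_1$ image of $v_0^kb_{p^k}$ order $p$ and contribute an extra $\Z/p$ summand. So the orders of the groups $\THH_{2p^{k+2}+2p-2}(\BP\langle 2\rangle;\Z_{(p)})$ would not match. The case $b_1$ is the same with $k=0$: the class $\lambda_1\lambda_2$ is torsion free in $\THH_*(\BP\langle 2\rangle;\Z_{(p)})$. Hence $b_1$ must be hit, modulo $v_1$, by a nontorsion class, forcing $p\cdot b_1=\lambda_1\sigma v_2$ (with $\lambda_1\sigma v_2\mapsto\lambda_1\lambda_2$ up to units and $p$-multiples).

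The main obstacle is the precise bookkeeping of orders in the long exact sequence for $v_1$. One must identify which classes of $\THH_*(\BP\langle 2\rangle;\BP\langle 1\rangle)$ in the relevant two adjacent degrees contribute to the cokernel and kernel of $v_1$. Then one shows that without the extension, the group in $\THH_*(\BP\langle 2\rangle;\Z_{(p)})$ would have the wrong length, or would contain a torsion-free class in degree $|b_1|$, or an extra $\Z/p$. I would first do the $b_1$ case, where the torsion-free-ness of $\lambda_1\lambda_2$ forces the extension immediately, and then induct on $k$, using the relation $p\cdot v_0^ka_{p^k}=v_1^{p^{k+1}}v_0^{k-1}a_{p^{k-1}}$ to track the targets.
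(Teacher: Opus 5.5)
Your argument for $b_1$ works once you add the check that no lift of $b_1$ is divisible by $v_1$. This holds because $\THH_{2p^2}(\BP\langle 1\rangle)$ is $0$ for $p$ odd and $\Z_{(2)}\{v_1^4\}$ for $p=2$. Hence a $p$-torsion lift would reduce to a nonzero torsion class in $\THH_{2p^2+2p-2}(\BP\langle 2\rangle;\Z_{(p)})=\Z_{(p)}\{\lambda_1\lambda_2\}$, which is impossible.

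For $k\ge 1$ there is a genuine gap: reduction mod $v_1$ in degree $d=|b_{p^k}|$ cannot see the extension. The target $v_1^{p^{k+1}-p}v_0^{k-1}a_{p^{k-1}}\sigma v_2$ is itself divisible by $v_1$ in $\THH_*(\BP\langle 2\rangle;\BP\langle 1\rangle)$. So in either scenario $p\cdot\overline{v_0^kb_{p^k}}\in\im(v_1)$, and the class of $\overline{v_0^kb_{p^k}}$ in $\operatorname{coker}(v_1)$ has order exactly $p$. The kernel contribution comes from the odd degree $d-2p+1$ and is unaffected, so both scenarios predict the same group in degree $d$. For $k=1$ you can check this directly: $\THH_{2p^3+2p-2}(\BP\langle 2\rangle;\Z_{(p)})=\Z/p\{\lambda_1\lambda_3\}$, which is what either scenario produces. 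Note also that the summand in this even degree comes from $\lambda_1T_0^2$ and has order $p^k$; the summand $\lambda_1\lambda_2T_0^2$ lies in odd degrees. So your claimed strict inequality of orders already fails at $k=1$.

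Your paragraph on $v_1$-torsion also runs backwards. If $p\bar\alpha=0$, then $v_1^p\bar\alpha$ maps to $v_1^p\alpha=0$ in the $\{1\}$-copy. It therefore lies in the image of the $\sigma v_2$-copy in even total degree, which is torsion free, so $v_1^p\bar\alpha=0$.

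That vanishing is the missing idea, and it has to be tested in a different degree. Since $v_1^{p-1}\bar\alpha\neq 0$ but $v_1^{p}\bar\alpha=0$, the long exact sequence for multiplication by $v_1$ yields a nonzero class in $\THH_*(\BP\langle 2\rangle;\Z_{(p)})$. Equivalently, there is a $v_1$-Bockstein differential whose source lies in degree $|v_1^pv_0^kb_{p^k}|+1=2p^{k+2}+2p^2-1$. By Theorem~\ref{thm:BP2-coeff}:
\begin{itemize}
\item the torsion-free classes lie in degrees $\le 2p^2+2p-2$;
\item the torsion lies in degrees $2ip^3-1+\{0,\,2p-1,\,2p^2-1,\,2p^2+2p-2\}$.
\end{itemize}
None of these equals $2p^{k+2}+2p^2-1$ when $k\ge 1$. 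This contradiction is the paper's proof, and it needs no order counting and no induction on $k$.

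For $k=0$ your route is actually the more robust one. At $p=2$ the analogous degree $|v_1^2b_1|+1=15$ contains the torsion class $\lambda_3$, so a bare degree count there would need supplementing. Your torsion-freeness argument in degree $|b_1|$ works uniformly in $p$.
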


\begin{proof}
  If these extensions do not occur, we have elements $v_0^k b_{p^k}$ in $\THH_*(\BP \langle 2 \rangle ; \BP \langle 1 \rangle)$ such that
  \[
    v_1^p v_0^k b_{p^k} = 0
  \]
  so that there must be a $v_1$-Bockstein in the spectral sequence
  \[
    \THH_*(\BP \langle 2 \rangle ; \Z_{(p)})[v_1] \Rightarrow \THH_*(\BP \langle 2 \rangle ; \BP \langle 1 \rangle)
  \]
  that is a differential whose source must be in degree
  \[
    |v_1^p v_0^k b_{p^k}| + 1 = 2(p^k + 1)p^2 - 1.
  \]
  However, by Theorem~\ref{thm:BP2-coeff} there is no torsion element in $\THH_*(\BP \langle 2 \rangle; \Z_{(p)})$ in that degree, since they are in degrees
  \[
    2ip^3 - 1 ,\, 2ip^3 - 1 + 2p - 1,\, 2ip^3 - 1 + 2p^2 - 1 \text{ and } 2ip^3 - 1 + 2p^2 - 1 + 2p - 1 \,.
  \]
\end{proof}

We are now prepared to prove our main theorem.

\begin{theorem}\label{thm:main-v2}
Suppose $\BP\langle 2\rangle$ satisfies Running Assumption~\ref{runningassumption}. 
The topological Hochschild homology of $\BP\langle 2\rangle$ is a direct sum
  \[
    \THH_*(\BP \langle 2 \rangle ; \BP \langle 1 \rangle) \cong \Z_{(p)}[v_1] \langle \sigma v_2 \rangle \oplus \mathcal{F} \oplus \Sigma^{2p - 1} (\mathcal{F}_{\geq 2p^2 - 1}) \oplus \mathcal{T} \oplus \Sigma^{2p - 1} \mathcal{T}
  \]
  where $\mathcal{T}$ is the $\Z_{(p)}[v_1]$-submodule of $\THH_*(\BP \langle 1 \rangle)$ containing the torsion elements that are in the image of the multiplication by $v_1^p$ map,  $\mathcal{F}$ denotes the submodule of the torsion free part of $\THH_*(\BP \langle 1 \rangle)$ of elements not of the form $v_1^k \cdot 1$ for some $k \geq 0$, and $\mathcal{F}_{\geq 2p^2 -1}$ is the submodule of $\mathcal{F}$ whose elements are in degree $\geq 2p^2 -1$.
\end{theorem}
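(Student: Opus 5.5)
The plan is to assemble the $\mathrm{E}^\infty$-page of the Brun spectral sequence of Proposition~\ref{prop:brunss} (for $n=2$) from the differentials already computed, and then feed in the extensions settled above. Since the spectral sequence is a single long exact sequence, $\THH_*(\BP\langle 2\rangle;\BP\langle 1\rangle)$ is an extension of $\ker d_1 \subset \THH_*(\BP\langle 1\rangle)\{1\}$ by $\operatorname{coker} d_1 \subset \THH_*(\BP\langle 1\rangle)\{\sigma v_2\}$. Using Theorem~\ref{thm:AHL}, split $\THH_*(\BP\langle 1\rangle)$ into three parts: the unit tower $\Z_{(p)}[v_1]\{1\}$, the torsion free part $\mathcal{F}$ generated by $\lambda_1$ and the $v_0^n a_{p^n}$, and the torsion part.

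First, the torsion free part. By Proposition~\ref{theorem:differentialellcoeff} and the rational comparison in its proof, it consists of permanent cycles and receives no differentials. On the $\{1\}$ side this contributes $\Z_{(p)}[v_1]\{1\}\oplus\mathcal{F}$. On the $\{\sigma v_2\}$ side it contributes $\Z_{(p)}[v_1]\{\sigma v_2\}\oplus \mathcal{F}\{\sigma v_2\}$, which together with the unit tower gives $\Z_{(p)}[v_1]\langle\sigma v_2\rangle$. Second, the torsion part, handled by Proposition~\ref{prop:specseqtorsion}. Its cokernel is $\operatorname{coim}(\times v_1^p)\{\sigma v_2\}\cong \mathcal{T}\{\sigma v_2\}$ via multiplication by $v_1^p$. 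Its kernel is $\im(\times v_1^p)\oplus M = \mathcal{T}\oplus M$. Here I would double check the degree shift: $|\sigma v_2|=2p^2-1$, so $v_1^p$ changes degree by $2p^2-2p$, and $\operatorname{coim}(\times v_1^p)\{\sigma v_2\}\cong \Sigma^{2p^2-1-(2p^2-2p)}\mathcal{T}=\Sigma^{2p-1}\mathcal{T}$, matching the statement.

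Third, the extensions. By Lemma~\ref{lem432} no extensions occur among torsion classes, so $\mathcal{T}\oplus\Sigma^{2p-1}\mathcal{T}$ splits off. This needs a short argument that $\mathcal{T}$ lifts as a $\BP\langle 1\rangle_*$-submodule, which follows from the fact that the lifts in the proof of Lemma~\ref{lem:onlyextensions} can be chosen to be $p$- and $v_1$-annihilated appropriately. By Lemma~\ref{lem:onlyextensions} and the subsequent proposition, the remaining extensions are exactly $p\cdot b_1=\lambda_1\sigma v_2$ and $p\cdot v_0^kb_{p^k}= v_1^{p^{k+1}-p}v_0^{k-1}a_{p^{k-1}}\sigma v_2$. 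These glue $M$ and $b_1$ onto $\mathcal{F}\{\sigma v_2\}$.

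The key step, and the main obstacle, is to identify the result of this gluing as an abstract $\Z_{(p)}[v_1]$-module. The claim is that $\mathcal{F}\{\sigma v_2\}$ extended by $b_1$ and $M$ is isomorphic to $\Sigma^{2p-1}(\mathcal{F}_{\ge 2p^2-1})$ as a torsion free module. Concretely, the generator $\lambda_1\sigma v_2$ together with $b_1$ should play the role of the shifted $a_1$-class, via the relation $p a_1=v_1^p\lambda_1$. Likewise, $v_0^kb_{p^k}$ should play the role of the shifted $v_0^{k}a_{p^k}$, matching the relations $p\cdot v_0^na_{p^n}=v_1^{p^{n+1}}v_0^{n-1}a_{p^{n-1}}$ of Theorem~\ref{thm:AHL}. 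Meanwhile the class corresponding to $\Sigma^{2p-1}\lambda_1$, which lies below degree $2p^2-1$, is the one absent from this summand. I would verify this by writing an explicit map $\Sigma^{2p-1}\mathcal{F}_{\ge 2p^2-1}\to\THH_*(\BP\langle 2\rangle;\BP\langle 1\rangle)$ on generators, checking that the AHL relations hold using the established extensions, and then checking that it is bijective by comparing associated graded pieces degree by degree. Rationally, the comparison with Proposition~\ref{prop:rational} confirms the ranks. Summing the pieces then gives the stated decomposition.
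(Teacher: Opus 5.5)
Your outline is the paper's own argument: read off $\ker d_1$ and $\operatorname{coker} d_1$ from Propositions~\ref{theorem:differentialellcoeff} and~\ref{prop:specseqtorsion}, then insert the extensions of Lemma~\ref{lem:onlyextensions}. The gap is in your third step, where you treat those as the only extensions and let $\Z_{(p)}[v_1]\{1,\sigma v_2\}$ and $\mathcal{F}$ split off unchanged. The paper likewise only asserts that the towers on $1$ and $\sigma v_2$ are untouched. Lemma~\ref{lem:onlyextensions} only treats extensions whose source is a torsion class in the $\{1\}$-copy. The torsion-free classes of $\mathcal{F}$ lie in odd degrees, and so do $v_1^k\sigma v_2$ and $\Sigma^{2p-1}\mathcal{T}$. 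Parity therefore does not stop a relation among lifts of $\mathcal{F}$ from picking up a $\sigma v_2$-term. Neither the rational rank count nor a degreewise comparison of associated graded pieces can detect such a term.

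Such an extension does occur. Put $T=\THH(\BP\langle 2\rangle;\BP\langle 1\rangle)$. The Brun sequence gives $T_{2p^2-2p+1}=\Z_{(p)}\{v_1^{p-1}\lambda_1\}$ and $T_{2p^2-1}=\Z_{(p)}\{\bar a_1,\sigma v_2\}$ for any lift $\bar a_1$ of $a_1$. Hence $v_1^p\lambda_1=p\bar a_1+c\,\sigma v_2$ for some $c\in\Z_{(p)}$. Now use the $v_1$-Bockstein cofiber sequence $\Sigma^{2p-2}T\xrightarrow{v_1}T\to\THH(\BP\langle 2\rangle;\Z_{(p)})$, which the paper also uses. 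It embeds $T_{2p^2-1}/v_1T_{2p^2-2p+1}\cong\Z_{(p)}^2/\Z_{(p)}(p,c)$ into $\THH_{2p^2-1}(\BP\langle 2\rangle;\Z_{(p)})$. That group is $\Z_{(p)}$: it has rank one by Proposition~\ref{prop:rational}, and $\THH_*(\BP\langle 2\rangle;\F_p)$ is $\F_p\{\lambda_2\}$ in degree $2p^2-1$ and zero in degree $2p^2-2$. So the quotient is torsion-free, which forces $c$ to be a unit, i.e.\ $\sigma v_2\equiv c^{-1}v_1^p\lambda_1$ modulo $p$. This is what one expects from the congruence $\eta_R(v_2)\equiv v_2+v_1t_1^p-v_1^pt_1$ modulo $p$. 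In the target module $\Z_{(p)}[v_1]\langle\sigma v_2\rangle\oplus\mathcal{F}\oplus\cdots$, the same quotient in degree $2p^2-1$ is $\Z_{(p)}\{\sigma v_2\}\oplus\Z/p\{a_1\}$, which has torsion. So this step cannot be repaired by a better argument: the stated splitting is false as a $\Z_{(p)}[v_1]$-module decomposition. Your method can deliver the underlying graded groups, or a description in which $\sigma v_2$ is glued to $\mathcal{F}$ by this relation, and possibly by further ones in higher degrees.
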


\begin{proof}[Proof of Theorem~\ref{thm:main-v2}]
Let $\mathcal{T}$ denote the $\Z_{(p)}[v_1]$-submodule of $\THH_*(\BP \langle 1 \rangle)$ containing the torsion elements that are in the image of the multiplication by $v_1^p$ map. Then $\mathcal{T}$ corresponds to the kernel of the differential on the torsion as described in Proposition \ref{prop:specseqtorsion}. In the torsion of $\THH_*(\BP \langle 1 \rangle)$, there is an isomorphism of $\Z_{(p)}[v_1]$-modules
  \begin{center}
    \begin{tikzcd}[row sep=tiny]
      \im (\times v_1^p) \rar & \operatorname{coim}(\times v_1^p) \\
      v_1^p b_i \rar[mapsto] & b_i
    \end{tikzcd}
  \end{center}
  that lowers the degree by $|v_1^p| = 2p^2 - 2p$. But in the spectral sequence, the second copy $\THH_*(\BP \langle 1 \rangle)\{ \sigma v_2\}$ is in degree $|\sigma v_2| = 2p^2 - 1$ higher, so in the end the shift is by $2p -1$. The torsion from the $\sigma v_2$-copy therefore agrees with $\Sigma^{2p-1} \mathcal{T}$.

  The $v_1$-towers above $1$ and $\sigma v_2$ are not touched by extensions, so that we get the $\Z_{(p)}[v_1]\langle \sigma v_2 \rangle$ summand. The description of $\mathcal{F}$ in the statement is equivalent to have
  \[
    \text{torsion free part of } \THH_*(\BP \langle 1 \rangle) \cong \Z_{(p)}[v_1] \{1\} \oplus \mathcal{F}
  \]
  so the $\mathcal{F}$ in the result comes directly from the torsion free part of the corresponding $\THH_*(\BP \langle 1 \rangle)\{1\}$ copy in the spectral sequence.

  We still have to account for the elements of the submodule $M$ of Proposition \ref{prop:specseqtorsion}. They form extensions with non torsion elements of $\THH_*(\BP \langle 1 \rangle)\{ \sigma v_2\}$, which are all permanent cycles and not boundaries. The remaining torsion free classes coming from $\THH_*(\BP \langle 1 \rangle)\{ \sigma v_2\}$ and the classes in $M$ form a $\Z_{(p)}[v_1]$-module isomorphic to $\Sigma^{2p-1}(\mathcal{F}_{\geq 2p^2 - 1})$ with the isomorphism given by the map
  \[
    v_0^n a_{p^n} \mapsto v_0^n b_{p^n},\, n \geq 0
  \]
  where the $v_0^n a_{p^n}$ are the $\Z_{(p)}[v_1]$ generators of $\mathcal{F}_{\geq 2p^2 - 1}$.
\end{proof}

\begin{remark}
  We can derive an explicit description of $\mathcal{T}$  and $\mathcal{F}$ from $\THH_*(\BP \langle 1 \rangle)$. We write $c_i$ for the lift of the class $v_1^p b_i$ of the spectral sequence.

  $\mathcal{T}$ is a quotient of the $\Z_{(p)}[v_1]$-module
  \[
    \Z_{(p)}[v_1]\{v_0^h c_{\alpha p^n
    },\, n\geq 1,\, \alpha\geq 1,\, p\nmid \alpha,\, h \geq 0\}
  \]
  by the relations:
  \begin{itemize}
  \item $v_0^{h} c_{\alpha p^n} = 0$ for any $\alpha \geq 1$ and $n \geq 1$, $\alpha$ not divisible by $p$, and $h \geq n$,
  \item $v_1^{p^{n-h+1}+p^{n-h}+\dots+p^2 }\cdot v_0^h c_{\alpha p^n} = 0$ for any $\alpha \geq 1$ and $n \geq 1$, $\alpha$ not divisible by $p$ and  $0 \leq h \leq n - 1$,
  \item $p\cdot c_{(\beta p+p-1)p^n} = v_0 c_{(\beta p+p-1)p^n}+v_1^{p^{n+2}}v_0^{\nu_p(\beta)} c_{\beta p^{n+1}}$ for any $\beta \geq 1$ and $n \geq 1$.
  \item $p\cdot v_0^h c_{\alpha p^n} = v_0^{h+1} c_{\alpha p^n}$ for any $\alpha \geq 1$, $n \geq 1$, $\alpha$ not divisible by $p$, and any $1 \leq h \leq n$, or $h=0$ not in the previous case.
  \end{itemize}

  $\mathcal{F}$ is a quotient of the $\Z_{(p)}[v_1]$-module
  \[
    \Z_{(p)}[v_1]\{\lambda_1,\, v_0^n a_{p^n},\,n\geq 0\} 
  \]
    by the relations:
  \begin{itemize}
  \item $p\cdot a_1 = v_1^p \lambda_1$,
  \item $p\cdot v_0^n a_{p^n} = v_1^{p^{n+1}}v_0^{n-1} a_{p^{n-1}}$ for any $n\geq 1$,
  \end{itemize}
  $\mathcal{F}_{\geq 2p^2 -1}$ is obtained from $\mathcal{F}$ by removing the generator $\lambda_1$.

  $\nu_p$ is the $p$-adic valuation, and the degrees are
  \[
    \begin{gathered}
      |a_i| = 2ip^2 - 1 \,, \\
      |c_i| = 2ip^2 - 1 + 2p^2 - 1 \,.
    \end{gathered}
  \]
\end{remark}

\begin{remark}
  We can describe
  \[
    \THH_*(\BP \langle 1 \rangle; \BP \langle 0 \rangle) \cong \THH_*(\ell; \Z_{(p)}) \cong \Z_{(p)}\langle \sigma v_1 \rangle \oplus \mathcal{T}_1 \oplus \Sigma^{2p - 1} \mathcal{T}_1
  \]
  where $\mathcal{T}_1$ consists of the torsion elements of $\THH_*(\BP \langle 0 \rangle)$ divisible by $p = v_0^1$.

  We have just described (we add an index here to disambiguate)
  \[
    \text{torsion of } \THH_*(\BP \langle 2 \rangle; \BP \langle 1 \rangle)\cong  \mathcal{T}_2 \oplus \Sigma^{2p - 1} \mathcal{T}_2
  \]
  where $\mathcal{T}_2$ consists the torsion elements of $\THH_*(\BP \langle 1 \rangle)$ divisible by $v_1^p$.

  Going down on more height, we can describe
  \[
    \THH_*(\BP \langle 0 \rangle; \BP \langle -1 \rangle) \cong \THH_*(H \Z_{(p)}; H\F_p) \cong \mathcal{T}_0 \oplus \Sigma^{2p - 1} \mathcal{T}_0
  \]
  where $\mathcal{T}_0$ is the image of the $p$-th power map. Here there is no torsion free part. 
  
  We speculate that for all $n \geq 1$, we have
  \[
    \text{torsion of } \THH_*(\BP \langle n \rangle; \BP \langle n-1 \rangle) \cong  \mathcal{T}_n \oplus \Sigma^{2p - 1} \mathcal{T}_n
  \]
  where $\mathcal{T}_n$ is the torsion elements of $\THH_*(\BP \langle n-1 \rangle)$ divisible by $v_{n-1}^{p^{n-1}}$.

  Moreover, the isomorphism between $\mathcal{T}_1$ and $\Sigma^{2p-1} \mathcal{T}_1$ is obtained by some operation  denoted $v_0^{-1} \sigma v_1$, the isomorphism between $\mathcal{T}_2$ and $\Sigma^{2p-1} \mathcal{T}_2$ is obtained by some operation denoted $v_1^{-p} \sigma v_2$, so that the isomorphism between $\mathcal{T}_n$ and $\Sigma^{2p-1} \mathcal{T}_n$ could be obtained by the operation $v_{n-1}^{-p^{n-1}} \sigma v_n$ in the Brun spectral sequence of Proposition~\ref{prop:brunss}.
\end{remark}

\section{Truncated Brown--Peterson spectra are not Thom spectra}\label{notThom}

In this section, we implicitly $2$-localize and write $\ku=\ell=\BP\langle 1\rangle$ and $\BP\langle n\rangle$ denotes an $\mathbb{E}_3$-$\MU$-algebra form of the $n$-th truncated Brown--Peterson spectrum at the prime $p=2$. 
The goal of this section is to prove Theorem~\ref{thm:main-thom}. 

First we need some notation. Given an $\mathbb{E}_2$-ring $A$ and an $\mathbb{E}_1$-$R$-algebra $B$ , we write 
 $\overline{\mathrm{THH}}(A;B)$ for the cofiber of the unit map 
$B\to \mathrm{THH}(A;B)$ in $B$-modules. The only input for our proof is the following computational result. 

\begin{proposition}\label{prop-computational-input}
We compute that $\pi_*\overline{\mathrm{THH}}(\BP\langle n\rangle;\ku)$  is generated as a $ku_*$-module in degrees $\le 8$ by $\lambda_1$, $a_1$ and $\sigma v_2$ in degrees $3$, $7$, $7$ respectively and there is a relation $pa_1= v_1^2 \lambda_1$
\end{proposition}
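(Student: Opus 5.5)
The plan is to compute $\THH_*(\BP\langle n\rangle;\ku)$ in degrees $\le 8$ at $p=2$ by base change along $\BP\langle n\rangle\to\BP\langle 2\rangle$, reducing to low-degree information we already control. Write
\[
\THH(\BP\langle n\rangle;\ku)\simeq \THH(\ku;\ku\otimes_{\BP\langle n\rangle}\ku).
\]
By Lemma~\ref{lem:BPn-rel-cooperations} we have $\pi_*(\ku\otimes_{\BP\langle n\rangle}\ku)=\ku_*\langle\sigma v_2,\dots,\sigma v_n\rangle$, with $|\sigma v_j|=2^{j+1}-1$. Hence $\sigma v_2$ sits in degree $7$, and every other exterior generator (or product involving one) lies in degree $\ge 15$. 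Filtering by the Whitehead tower, or equivalently iterating the fiber sequences of Proposition~\ref{prop:brunss} for $n,n-1,\dots,3$, shows that the map $\THH(\BP\langle n\rangle;\ku)\to\THH(\BP\langle 2\rangle;\ku)$ is an isomorphism on $\pi_*$ in degrees $\le 13$. In particular it is an isomorphism in degrees $\le 8$. So it suffices to treat $n=2$, where Theorem~\ref{thm:main-v2} applies since $p=2$ satisfies Running Assumption~\ref{runningassumption}.

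For $n=2$ we read off the low-degree part from the Brun spectral sequence $\THH_*(\ku)\langle\sigma v_2\rangle\Rightarrow\THH_*(\BP\langle 2\rangle;\ku)$.

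First, by Theorem~\ref{thm:AHL} with $p=2$, the reduced part of $\THH_*(\ku)$ in degrees $\le 8$ is as follows:
\begin{itemize}
\item $\lambda_1$ in degree $3$;
\item $a_1$ in degree $7$, with $2a_1=v_1^2\lambda_1$;
\item $b_1$ in degree $10$, and $a_2$ in degree $15$.
\end{itemize}
So only $\lambda_1$, $a_1$ and their $v_1$-multiples contribute.

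Second, the $\sigma v_2$-copy contributes $\sigma v_2$ in degree $7$. The next class there is $\lambda_1\sigma v_2$ in degree $10>8$.

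Third, the differentials of Proposition~\ref{theorem:differentialellcoeff} start on $b_i$ with $i\ge2$, which lie far above degree $8$. Torsion-free classes are permanent cycles by the rationalization argument. So nothing in degrees $\le 9$ is hit or killed.

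Fourth, the only extension of Lemma~\ref{lem:onlyextensions} in this range is $p\cdot b_1=\lambda_1\sigma v_2$, which lives in degree $10$. It therefore does not affect the module structure in degrees $\le 8$, apart from possibly mixing $a_1$ and $\sigma v_2$ in degree $7$.

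The unit $\ku\to\THH$ splits off the $\ku_*\{1\}$ summand, and passing to the cofiber leaves $\lambda_1,a_1,\sigma v_2$ as generators in degrees $\le 8$, subject to the relation $pa_1=v_1^2\lambda_1$ inherited from $\THH_*(\ku)$.

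The main obstacle is justifying that there is no additional relation or twisted extension in degree $7$ between $a_1$ and $\sigma v_2$. For example, a relation $pa_1=v_1^2\lambda_1+c\,\sigma v_2$ could a priori appear after lifting. I would rule this out by mapping to the rationalization in Proposition~\ref{prop:rational}, where $\sigma v_2$ and $\sigma v_1$ are independent free generators. I would also map to $\THH(\ku)$, where $\sigma v_2$ goes to $0$ and the relation $pa_1=v_1^2\lambda_1$ holds on the nose. Together these pin down $c=0$ after possibly modifying the lift of $a_1$ by a multiple of $\sigma v_2$, exactly as in the lift-adjustment argument of Lemma~\ref{lem:onlyextensions}.
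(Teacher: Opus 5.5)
\textbf{Verdict: there is a genuine gap at the step you flagged, and it cannot be closed.} Your reduction to $n=2$ (the comparison fiber is $14$-connected) and your low-degree bookkeeping are fine. The paper instead works directly with the fiber $F$ of $\ku\otimes_{\BP\langle n\rangle}\ku\to\ku$, with the same low-degree outcome.

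Write $2\tilde a_1 = v_1^2\tilde\lambda_1 + c\,\sigma v_2$ with $c\in\mathbb{Z}_{(2)}$. Replacing $\tilde a_1$ by $\tilde a_1+d\,\sigma v_2$ changes $c$ to $c+2d$. So the only invariant is $c$ modulo $2$, and you can reach $c=0$ only if $c$ is even. Neither of your tools detects $c$ modulo $2$:
\begin{itemize}
\item The map to $\THH(\ku)$ kills $\sigma v_2$, so it says nothing about $c$.
\item The rationalization is compatible with every value of $c$. Rationally $\tilde a_1=\tfrac12(v_1^2\tilde\lambda_1+c\,\sigma v_2)$, and the integral lattice is exactly what is unknown.
\end{itemize}
So ``together these pin down $c=0$'' does not follow.

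Moreover, $c$ is in fact a unit. Reduce modulo $v_1$, i.e.\ map to $\THH(\BP\langle 2\rangle;\mathbb{Z}_{(2)})$.
\begin{itemize}
\item By Theorem~\ref{thm:BP2-coeff}, $\THH_7(\BP\langle 2\rangle;\mathbb{Z}_{(2)})=\mathbb{Z}_{(2)}\{\lambda_2\}$ is torsion-free, since $T_0^2$ starts in degree $15$.
\item $v_1^2\tilde\lambda_1$ maps to $0$.
\item Any lift $\tilde a_1$ maps to a lift of the nonzero class $a_1\in\THH_7(\ku;\mathbb{Z}_{(2)})\cong\mathbb{Z}/2$, hence to a nonzero element.
\end{itemize}
If $2x=v_1^2\tilde\lambda_1$ for some $x$ in degree $7$, then $x$ lifts $a_1$ and its image would be nonzero $2$-torsion in a torsion-free group. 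So $v_1^2\lambda_1$ is not $2$-divisible.

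The $\mathbb{Z}$-coefficient Brun sequence in degree $7$ is $0\to\mathbb{Z}_{(2)}\{\sigma v_2\}\to\mathbb{Z}_{(2)}\{\lambda_2\}\to\mathbb{Z}/2\{a_1\}\to0$. It gives $\sigma v_2\mapsto 2u\lambda_2$ with $u$ a unit, and comparing images forces $c$ odd. Hence in degrees $\le 8$ the reduced module is free on $\lambda_1$ and $a_1$, with $\sigma v_2\,\dot{=}\,2a_1-v_1^2\lambda_1$. This is consistent with $\THH_*(\BP;\ku)\cong\ku_*\otimes\Lambda(\lambda_1,\lambda_2,\dots)$, which agrees with $\THH_*(\BP\langle n\rangle;\ku)$ through degree $13$.

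The paper's own proof has the same gap. Its step ``$\THH_7(\ku)$ is free, so the extension is trivial'' only splits the extension of abelian groups, not the $\ku_*$-module relation. The splitting off of $\mathbb{Z}_{(p)}[v_1]\langle\sigma v_2\rangle$ in Theorem~\ref{thm:main-v2} rests on the same unchecked claim, so citing it would not help you. As stated, with $\sigma v_2$ an independent generator and $pa_1=v_1^2\lambda_1$, the proposition is false, so no argument can establish it.
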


\begin{proof}
We consider the fiber sequence  
\[ \THH(\ku;F) \to  \THH(\BP\langle n\rangle ; \ku )\to 
\THH(\ku) 
\]
where $F$ is the fiber of the map 
\[ \ku \otimes_{\BP\langle n \rangle }\ku \to \ku \]
of $\ku\otimes \ku^{\textup{op}}$-modules. 
The map $\pi_*\ku\to \pi_*\THH(\ku)$ is an isomorphism in degrees $*\le 2$ by Theorem~\ref{thm:AHL}. Since we also know that  
\[ 
\pi_k F = \begin{cases} 0 & \text{ if } k<7\text{ or } k = 8,\\
           \mathbb{Z}_{(p)} &\text{ if } k=7  
\end{cases}
\]
we can conclude that the inclusion of $0$-simplices $F\to \THH(\ku;F)$ is an isomorphism in degrees $\le 8$. We write $\sigma v_2$ for the image of the generator of $\THH_{7}(\ku;F)=\mathbb{Z}_{(p)}$ in $\THH_7(\BP\langle n\rangle ;\ku)$. Note that $\THH_7(\ku)=\mathbb{Z}_{(p)}\{a_1\}$ so the extension is trivial and we have proven the claim. 
\end{proof}

\begin{proof}[Proof of Theorem~\ref{thm:main-thom}] 
Suppose $\BP\langle n\rangle$ is the Thom spectrum of a $2$-fold loop map at the prime $2$. Then 
\[ \THH(\BP\langle n\rangle;\mathrm{ku})\simeq \mathrm{ku} \otimes X_{+}\simeq \mathrm{ku} \otimes_{\mathrm{ko}}(\mathrm{ko} \otimes X_+)
\]
by~\cite[Theorem~2]{BCS10} together with the fact that $\eta$ has trivial image under the Hurewicz map $\mathbb{S}\to \BP\langle n\rangle$, see the proof of~\cite[Theorem~2]{Bea17}. We argue that no such $Y=\mathrm{ko}\otimes X_{+}$ exists by the same argument as~\cite{AHL09}. This relies on Proposition~\ref{prop-computational-input}. We include the details for completeness since the argument from~\cite{AHL09} needs to be adapted slightly. 

Suppose a cellular $ko$-module $Y$ existed. If it did, it would have cells beginning in degrees $3$, $7$, $7$ and $8$ by Proposition~\ref{prop-computational-input}. Since $\pi_3\mathrm{ku}=0$, we know the first attaching map of $X$ is zero and therefore the first attaching maps of $Y$ is also trivial. We therefore know that $Y$ begins with 
\[ \Sigma^{3}\mathrm{ko}\vee \Sigma^{7}\mathrm{ko}\vee \Sigma^{7}\mathrm{ko}\to (\Sigma^{3}\mathrm{ko}\vee \Sigma^{7}\mathrm{ko}\vee \Sigma^{7}\mathrm{ko})\cup_{\phi} C\Sigma^{7}\mathrm{ko})\to Y
\]
The attaching map $\phi$ is an element in 
\begin{equation}\label{pi7ko}\pi_{7} (\Sigma^{3}\mathrm{ko}\vee \Sigma^{7}\mathrm{ko}\vee \Sigma^{7}\mathrm{ko})=\mathbb{Z}\{v_1^2\}\oplus \mathbb{Z}\{1\}\oplus \mathbb{Z}\{1\} \,.
\end{equation}
By the relation $v_1^2\lambda_1=a_1$, the goal would be to consider the class 
\[
(v_1^2,1,0) \in \pi_{7} (\Sigma^{3}\mathrm{ko}\vee \Sigma^{7}\mathrm{ko}\vee \Sigma^{7}\mathrm{ko})=\mathbb{Z}\{v_1^2\}\oplus \mathbb{Z}\{1\}\oplus \mathbb{Z}\{1\} 
\]
and lift it to \eqref{pi7ko}. However the map 
\[
\pi_7(\Sigma^{3}\mathrm{ko}\vee \Sigma^{7}\mathrm{ko}\vee \Sigma^{7}\mathrm{ko})  \to 
\pi_7(\Sigma^{3}\mathrm{ku}\vee \Sigma^{7}\mathrm{ku}\vee \Sigma^{7}\mathrm{ku}) 
\]
is induced by the complexification map $\mathrm{ko}\to \mathrm{ku}$, so it is multiplication by 2 on the $\Sigma^3$ summand and identity on the $\Sigma^7$ summands, and therefore $(v_1^2,1,0)$ is not in the image, leading to a contradiction. 
\end{proof}

\begin{remark}
As in the case of~\cite{AHL09}, our argument for proving Theorem~\ref{thm:main-thom} requires that we work at the prime $p=2$. 
\end{remark}

\begin{remark}
We have proven that $\BP\langle n\rangle$ at $p=2$ is not a Thom spectrum of a $2$-fold loop map over the sphere spectrum, however it may still be a Thom spectrum in the category of modules over a different spectrum in the sense of~\cite{ABGHR14}. For example, it has been conjectured that it is the Thom spectrum over certain Ravenel spectra~\cite[Conjecture~1.1.3]{Dev24} and it has been proven that it is the Thom spectrum in the category of modules over a polynomial algebra over complex cobordism in~\cite{Qui25}. 
\end{remark}